\newif\ifpdf
\numberwithin{equation}{section} \swapnumbers
\newtheorem{satz}{Satz}[section]
\newtheorem{proposition}[satz]{Proposition}
\newtheorem{corollary}[satz]{Corollary}
\newtheorem{lemma}[satz]{Lemma}
\newtheorem{definition}[satz]{Definition}
\newtheorem{remark}[satz]{Remark}
\begin{document}

\hyphenation{ma-ni-folds Swit-zer-land Lem-ma}

\title[SPDEs and submanifolds in Hilbert spaces]{Stochastic partial differential equations and submanifolds in Hilbert spaces}
\author{Damir Filipovi\'c \and Stefan Tappe \and Josef Teichmann}
\date{20 June 2014}

\address{EPFL and Swiss Finance Institute, Quartier UNIL-Dorigny, Extranef 218, CH-1015 Lausanne, Switzerland}
\email{damir.filipovic@epfl.ch}
\address{Leibniz Universit\"{a}t Hannover, Institut f\"{u}r Mathematische Stochastik, Welfengarten 1, D-30167 Hannover, Germany}
\email{tappe@stochastik.uni-hannover.de}
\address{ETH Z\"urich, Department of Mathematics, R\"amistrasse 101, CH-8092 Z\"urich, Switzerland}
\email{josef.teichmann@math.ethz.ch}
\begin{abstract}
The goal of this appendix is to provide results about stochastic partial differential equations driven by Wiener processes and Poisson measures and results about submanifolds in Hilbert spaces. It should serve as a reference for auxiliary results that we require in \cite{FTT-manifolds}.
\end{abstract}
\keywords{Stochastic partial differential equation, jump-diffusion,
submanifold with boundary, tangent space}
\subjclass[2010]{60H15, 60G17}
\maketitle

\section{Introduction}

In \cite{FTT-manifolds}, we have provided necessary and sufficient conditions for stochastic invariance of finite dimensional submanifolds with boundary in Hilbert spaces for stochastic partial differential equations (SPDEs) of the form 
\begin{align}\label{SPDE}
\left\{
\begin{array}{rcl}
dr_t & = & (Ar_t + \alpha(r_t))dt + \sigma(r_t) dW_t
+ \int_E \gamma(r_{t-},x) (\mu(dt,dx) - F(dx) dt)
\medskip
\\ r_0 & = & h_0
\end{array}
\right.
\end{align}
driven by Wiener processes and Poisson random measures. The goal of this appendix is to serve as a reference for auxiliary results that we require for the proofs in \cite{FTT-manifolds}. 

In Section~\ref{app-SDE} we provide results about SPDEs driven by Wiener processes and Poisson random measures, and in Section~\ref{app-submanifolds} we provide results about finite dimensional submanifolds with boundary in Hilbert spaces.

\section{SPDEs driven by Wiener processes and Poisson random measures}\label{app-SDE}

In this section, we provide results about
SPDEs driven by Wiener processes and Poisson random measures. References on this topic are, e.g., \cite{Ruediger-mild,Marinelli-Prevot-Roeckner,SPDE}. Furthermore, we mention \cite{P-Z-book} regarding SPDEs driven by L\'{e}vy processes, and \cite{Da_Prato, Prevot-Roeckner, Atma-book} regarding SPDEs driven by Wiener processes.

In the sequel, $(\Omega, \mathcal{F}, (\mathcal{F}_t)_{t \geq 0}, \mathbb{P})$ denotes a filtered probability space satisfying the usual conditions.
Let $H$ be a separable Hilbert space and let $(S_t)_{t \geq 0}$ be a $C_0$-semigroup on $H$ with
infinitesimal generator $A : \mathcal{D}(A) \subset H \rightarrow
H$. We denote by $A^* : \mathcal{D}(A^*) \subset H \rightarrow H$
the adjoint operator of $A$. Recall that the domains
$\mathcal{D}(A)$ and $\mathcal{D}(A^*)$ are dense in $H$, see, e.g.,
\cite[Theorems~13.35.c and 13.12]{Rudin}.

Let $\mathbb{H}$ be another separable Hilbert space and let $Q \in L(\mathbb{H})$ be a nuclear, self-adjoint, positive definite linear operator. Then, there exist an orthonormal basis $(e_j)_{j \in \mathbb{N}}$ of $\mathbb{H}$ and a sequence $(\lambda_j)_{j \in \mathbb{N}} \subset (0,\infty)$ with $\sum_{j \in \mathbb{N}} \lambda_j < \infty$ such that
\begin{align*}
Qu = \sum_{j \in \mathbb{N}} \lambda_j \langle u,e_j \rangle_{\mathbb{H}} \, e_j, \quad u \in \mathbb{H},
\end{align*}
namely, the $\lambda_j$ are the eigenvalues of $Q$, and each $e_j$
is an eigenvector corresponding to $\lambda_j$. The space $\mathbb{H}_0 := Q^{1/2}(\mathbb{H})$, equipped with the inner product
\begin{align*}
\langle u,v \rangle_{\mathbb{H}_0} := \langle Q^{-1/2} u, Q^{-1/2} v \rangle_{\mathbb{H}},
\end{align*}
is another separable Hilbert space
and $( \sqrt{\lambda_j} e_j )_{j \in \mathbb{N}}$ is an orthonormal basis.
Let $W$ be a $\mathbb{H}$-valued $Q$-Wiener process, see \cite[pages~86, 87]{Da_Prato}. We denote by $L_2^0(H) := L_2(\mathbb{H}_0,H)$ the space of Hilbert-Schmidt
operators from $\mathbb{H}_0$ into $H$, which, endowed with the
Hilbert-Schmidt norm
\begin{align*}
\| \Phi \|_{L_2^0(H)} := \bigg( \sum_{j \in \mathbb{N}} \lambda_j \| \Phi e_j \|^2 \bigg)^{1/2},
\quad \Phi \in L_2^0(H)
\end{align*}
itself is a separable Hilbert space.
According to \cite[Proposition~4.1]{Da_Prato}, the sequence of stochastic
processes $( \beta^j )_{j \in \mathbb{N}}$ defined as
\begin{align*}
\beta^j := \frac{1}{\sqrt{\lambda_j}} \langle W, e_j \rangle, \quad j \in \mathbb{N}
\end{align*}
is a sequence of
real-valued independent standard Wiener processes and we
have the expansion
\begin{align*}
W = \sum_{j \in \mathbb{N}} \sqrt{\lambda _j} \beta^j e_j.
\end{align*}
Note that $L_2^0(H) \cong \ell^2(H)$, because
\begin{align}\label{isom-isom}
L_2^0(H) \rightarrow \ell^2(H), \quad \Phi \mapsto (\Phi^j)_{j \in \mathbb{N}} \quad \text{with $\Phi^j := \sqrt{\lambda_j} \Phi e_j$, $j \in \mathbb{N}$}
\end{align}
is an isometric isomorphism. According to \cite[Theorem~4.3]{Da_Prato}, for every predictable process $\Phi : \Omega \times
\mathbb{R}_+ \rightarrow L_2^0(H)$ satisfying
\begin{align*}
\mathbb{P} \bigg( \int_0^t \| \Phi_s \|_{L_2^0(H)}^2 ds < \infty
\bigg) = 1 \quad \text{for all $t \geq 0$}
\end{align*}
we have the identity
\begin{align}\label{Wiener-int-expansion}
\int_0^t \Phi_s dW_s = \sum_{j \in \mathbb{N}} \int_0^t \Phi_s^j d\beta_s^j, \quad t \geq 0.
\end{align}
Let $(E,\mathcal{E})$ be a measurable space which we assume to be a
\textit{Blackwell space} (see \cite{Dellacherie,Getoor}). We remark
that every Polish space with its Borel $\sigma$-field is a Blackwell
space.
Furthermore, let $\mu$ be a time-homogeneous Poisson random measure on
$\mathbb{R}_+ \times E$, see \cite[Definition~II.1.20]{Jacod-Shiryaev}.
Then its compensator is of the form $dt \otimes F(dx)$, where $F$ is
a $\sigma$-finite measure on $(E,\mathcal{E})$.

We shall now focus on SPDEs of the type (\ref{SPDE}). Let $\alpha : H \rightarrow H$, $\sigma : H \rightarrow L_2^0(H)$ and $\gamma : H \times E \rightarrow H$ be measurable mappings.

\begin{definition}\label{def-solutions}
Let $h_0 : \Omega \rightarrow H$ be a $\mathcal{F}_0$-measurable random variable. Furthermore, let $r = r^{(h_0)}$ be a $H$-valued c\`{a}dl\`{a}g adapted process and let $\tau > 0$ a stopping time such that for all $t \geq 0$ we have
\begin{align*}
\mathbb{P} \bigg( \int_{0}^{t \wedge \tau} \bigg( \| r_s \| + \| \alpha(r_s) \| + \| \sigma(r_s) \|_{L_2^0(H)}^2 + \int_E \| \gamma(r_s,x) \|^2 F(dx) \bigg) ds < \infty \bigg) = 1.
\end{align*}
Then the process $r$ is called
\begin{itemize}
\item a \emph{local strong solution} to (\ref{SPDE}), if
\begin{align}\label{f-meant-0}
&r_{t \wedge \tau} \in \mathcal{D}(A) \quad \text{for almost all $t \in \mathbb{R}_+$,} \quad \text{$\mathbb{P}$-almost surely,}
\\ \label{f-meant-1} &\mathbb{P} \bigg( \int_{0}^{t \wedge \tau} \| A r_{s} \| ds < \infty \bigg) = 1 \quad \text{ for all } t \geq 0
\end{align}
and we have $\mathbb{P}$-almost surely
\begin{equation}\label{f-meant-2}
\begin{aligned}
r_{t \wedge \tau} &= h_0 + \int_{0}^{t \wedge \tau} \big( A r_s + \alpha(r_s) \big) ds + \int_{0}^{t \wedge \tau} \sigma(r_s) dW_s
\\ &\quad + \int_{0}^{t \wedge \tau} \int_E \gamma(r_{s-},x) (\mu(ds,dx) - F(dx)ds), \quad t \geq 0.
\end{aligned}
\end{equation}
\item a \emph{local weak solution} to (\ref{SPDE}), if for all $\zeta \in \mathcal{D}(A^*)$ we have $\mathbb{P}$-almost surely
\begin{align*}
\langle \zeta,r_{t \wedge \tau} \rangle &= \langle \zeta,h_0 \rangle + \int_{0}^{t \wedge \tau} \big( \langle A^* \zeta, r_s \rangle + \langle \zeta, \alpha(r_s) \rangle \big) ds + \int_{0}^{t \wedge \tau} \langle \zeta, \sigma(r_s) \rangle dW_s
\\ &\quad + \int_{0}^{t \wedge \tau} \int_E \langle \zeta, \gamma(r_{s-},x) \rangle (\mu(ds,dx) - F(dx)ds), \quad t \geq 0.
\end{align*}
\item a \emph{local mild solution} to (\ref{SPDE}), if we have $\mathbb{P}$-almost surely
\begin{align*}
r_{t \wedge \tau} &= S_{t \wedge \tau} h_0 + \int_{0}^{t \wedge \tau} S_{(t \wedge \tau) - s} \alpha(r_s) ds + \int_{0}^{t \wedge \tau} S_{(t \wedge \tau) - s} \sigma(r_s) dW_s
\\ &\quad + \int_{0}^{t \wedge \tau} \int_E S_{(t \wedge \tau) - s} \gamma(r_{s-},x) (\mu(ds,dx) - F(dx)ds), \quad t \geq 0.
\end{align*}
\end{itemize}
We call $\tau$ the \emph{lifetime} of $r$. If $\tau = \infty$, then we call $r$ a \emph{strong}, \emph{weak} or \emph{mild solution} to (\ref{SPDE}), respectively.
\end{definition}

\begin{remark}\label{remark-domain}
Since the process $r$ is c\`{a}dl\`{a}g, we have
\begin{align*}
r_t = r_{t-} \quad \text{for almost all $t \in \mathbb{R}_+$,} \quad \text{$\mathbb{P}$-almost surely,}
\end{align*}
and hence, relation (\ref{f-meant-0}) implies
\begin{align*}
r_{(t \wedge \tau)-} \in \mathcal{D}(A) \quad \text{for almost all $t \in \mathbb{R}_+$,} \quad \text{$\mathbb{P}$-almost surely.}
\end{align*}
According to \cite[Lemma~2.4.2]{fillnm}, the process $f$ defined by
\begin{align}\label{def-f-Ar}
f_t :=
\begin{cases}
A r_{t-}, & \text{if } r_{t-} \in \mathcal{D}(A)
\\ 0, & \text{otherwise}
\end{cases}
\end{align}
is predictable. By slight abuse of notation, we have written $A r$ instead of $f$ in (\ref{f-meant-1}) and (\ref{f-meant-2}).
\end{remark}

\begin{remark}
The following results are well-known:
\begin{itemize}
\item Every (local) strong solution to (\ref{SPDE}) is also a (local) weak solution to (\ref{SPDE}).

\item Every (local) weak solution to (\ref{SPDE}) is also a (local) mild solution to (\ref{SPDE}).

\item If $A$ is bounded, i.e. generates a norm-continuous semigroup $(S_t)_{t \geq 0}$, then the concepts of (local) strong, weak and mild solutions to (\ref{SPDE}) are equivalent.
\end{itemize}
\end{remark}

\begin{definition}
The semigroup $(S_t)_{t \geq 0}$ is called \emph{pseudo-contractive}, if
\begin{align*}
\| S_t \| \leq e^{\omega t}, \quad t \geq 0
\end{align*}
for some constant $\omega \in \mathbb{R}$.
\end{definition}

\begin{definition}
The mappings $(\alpha,\sigma,\gamma)$ are called \emph{(locally) Lipschitz continuous}, if:
\begin{itemize}
\item $\alpha : H \rightarrow H$ is (locally) Lipschitz continuous.

\item $\sigma : H \rightarrow L_2^0(H)$ is (locally) Lipschitz continuous.

\item For each $h \in H$ we have $\gamma(h,\bullet) \in L^2(F)$ and $\gamma : H \rightarrow L^2(F)$ is (locally) Lipschitz continuous, where we use the notation $L^2(F) := L^2(E,\mathcal{E},F;H)$.
\end{itemize}
\end{definition}

\begin{remark}\label{remark-ex-SDE}
The following results are well-known:
\begin{itemize}
\item If $(S_t)_{t \geq 0}$ is pseudo-contractive and $(\alpha,\sigma,\gamma)$ are Lipschitz continuous, then we have existence and uniqueness of mild and weak solutions to (\ref{SPDE}).

\item If $(S_t)_{t \geq 0}$ is pseudo-contractive and $(\alpha,\sigma,\gamma)$ are locally Lipschitz continuous, then we have existence and uniqueness of local mild and weak solutions to (\ref{SPDE}).

\item If $(\alpha,\sigma,\gamma)$ are locally Lipschitz continuous, then we have uniqueness of local mild solutions to (\ref{SPDE}).
\end{itemize}
\end{remark}

\begin{lemma}\label{lemma-strong-solution}
Let $\mathcal{M} \subset \mathcal{D}(A)$ be a subset such that $A$
is continuous on $\mathcal{M}$, and let $r = r^{(h_0)}$ be a local weak solution to (\ref{SPDE}) with lifetime $\tau > 0$ for some $\mathcal{F}_0$-measurable random variable $h_0 : \Omega \rightarrow H$ such that $(r^{\tau})_- \in \mathcal{M}$ up to an evanescent set.
Then $r$ is also a local strong solution to (\ref{SPDE}) with lifetime $\tau$.
\end{lemma}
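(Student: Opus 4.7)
The plan is to verify the three requirements (\ref{f-meant-0}), (\ref{f-meant-1}), (\ref{f-meant-2}) from Definition \ref{def-solutions} for a strong solution, using as input that $r$ is already a weak solution, that $(r^{\tau})_- \in \mathcal{M}$, and that $A|_{\mathcal{M}}$ is continuous. Once (\ref{f-meant-0})--(\ref{f-meant-1}) are secured, the strong identity (\ref{f-meant-2}) will be extracted from the weak identity by testing against $\zeta \in \mathcal{D}(A^*)$, swapping $\langle A^{*}\zeta, r_{s}\rangle = \langle \zeta, Ar_{s}\rangle$, and invoking density of $\mathcal{D}(A^*)$ in $H$.

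For (\ref{f-meant-0}): since $r$ is c\`{a}dl\`{a}g the set $\{t\ge 0: r_{t}\neq r_{t-}\}$ is at most countable on every sample path, so $r_{t\wedge\tau} = r_{(t\wedge\tau)-}$ for Lebesgue almost all $t$, $\mathbb{P}$--almost surely; combined with the hypothesis $(r^{\tau})_- \in \mathcal{M}\subset\mathcal{D}(A)$ this yields $r_{t\wedge \tau}\in\mathcal{D}(A)$ for almost all $t$, and in fact the predictable process $f$ from Remark \ref{remark-domain} agrees with the well-defined map $t\mapsto Ar_{t-}$ on $[\![ 0,\tau[\![$ up to an evanescent set.

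For (\ref{f-meant-1}) I use the continuity of $A$ on $\mathcal{M}$ pathwise. Fix $\omega$ outside an evanescent set and $T>0$. The path $s\mapsto r_{s-}(\omega)$ is c\`{a}gl\`{a}d on $(0,T\wedge\tau(\omega)]$ with values in $\mathcal{M}$, and composing with the continuous map $A|_{\mathcal{M}}:\mathcal{M}\to H$ gives that $s\mapsto Ar_{s-}(\omega)$ is c\`{a}gl\`{a}d on that compact interval and therefore bounded. Consequently $\int_{0}^{T\wedge\tau}\|Ar_{s-}\|\,ds<\infty$ almost surely for every $T$, which yields (\ref{f-meant-1}). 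I anticipate this step, namely the pathwise regularity/boundedness of $Ar_{s-}$, to be the main obstacle: it is the only place where the hypothesis that $A|_{\mathcal{M}}$ is continuous is essentially used, and the trick is to transfer c\`{a}gl\`{a}d regularity from $r_{-}$ through the continuous nonlinearity $A|_{\mathcal{M}}$ in order to get the pathwise integrability needed to give meaning to $\int Ar_{s}\,ds$ as an ordinary Bochner integral.

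For (\ref{f-meant-2}) I apply the weak formulation. For any $\zeta\in\mathcal{D}(A^{*})$ we have $\langle A^{*}\zeta, r_{s}\rangle = \langle \zeta, Ar_{s}\rangle$ on the full-measure set $\{r_{s}\in\mathcal{D}(A)\}$, so the weak equation rewrites as
\begin{align*}
\langle \zeta, r_{t\wedge\tau}\rangle &= \langle \zeta, h_0\rangle + \int_{0}^{t\wedge\tau}\langle \zeta, Ar_{s}+\alpha(r_{s})\rangle ds + \int_{0}^{t\wedge\tau}\langle \zeta, \sigma(r_{s})\rangle dW_{s} \\
&\quad + \int_{0}^{t\wedge\tau}\!\int_{E}\langle \zeta,\gamma(r_{s-},x)\rangle(\mu(ds,dx)-F(dx)ds).
\end{align*}
By (\ref{f-meant-1}) and the integrability built into Definition \ref{def-solutions}, I can pull $\langle \zeta,\cdot\rangle$ out of each of the Bochner, $L^{2}_{0}$-stochastic and Poisson stochastic integrals (standard commutation properties of continuous linear functionals with these integrals), obtaining $\mathbb{P}$--a.s.\ for all $t\ge 0$
\begin{align*}
\Big\langle \zeta,\; r_{t\wedge\tau} - h_{0} - \!\int_{0}^{t\wedge\tau}\!(Ar_{s}+\alpha(r_{s}))ds - \!\int_{0}^{t\wedge\tau}\!\sigma(r_{s})dW_{s} - \!\int_{0}^{t\wedge\tau}\!\!\int_{E}\!\gamma(r_{s-},x)(\mu-F\,ds)\Big\rangle = 0.
\end{align*}
Since $\mathcal{D}(A^{*})$ is dense in $H$ (cf.\ \cite[Thm.\ 13.12]{Rudin}), the bracketed $H$-valued process vanishes up to an evanescent set, which is precisely (\ref{f-meant-2}). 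This completes the upgrade from local weak to local strong solution.
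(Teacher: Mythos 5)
Your proposal is correct and follows essentially the same route as the paper: (\ref{f-meant-0}) from $(r^{\tau})_- \in \mathcal{M} \subset \mathcal{D}(A)$ together with the c\`{a}dl\`{a}g property, (\ref{f-meant-1}) from the continuity of $A$ on $\mathcal{M}$, and (\ref{f-meant-2}) by rewriting the weak identity via $\langle A^*\zeta, r_s \rangle = \langle \zeta, A r_s \rangle$, pulling $\zeta$ through the Bochner and stochastic integrals, and invoking the density of $\mathcal{D}(A^*)$ in $H$. The only difference is cosmetic: your pathwise c\`{a}gl\`{a}d-boundedness justification of (\ref{f-meant-1}) spells out what the paper compresses into the single clause ``due to the continuity of $A$ on $\mathcal{M}$''.
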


\begin{proof}
Condition (\ref{f-meant-0}) is satisfied, because $(r^{\tau})_- \in \mathcal{M} \subset \mathcal{D}(A)$ up to an evanescent set, and condition (\ref{f-meant-1}) is satisfied due to the continuity of $A$ on $\mathcal{M}$. Taking into account Remark~\ref{remark-domain}, we obtain for each $\zeta \in \mathcal{D}(A^*)$ that $\mathbb{P}$-almost surely
\begin{align*}
\langle \zeta,r_{t \wedge \tau} \rangle &= \langle \zeta,h_0 \rangle + \int_0^{t \wedge \tau}
( \langle A^* \zeta, r_s \rangle + \langle \zeta, \alpha(r_s) \rangle ) ds + \int_0^{t \wedge \tau}
\langle \zeta, \sigma(r_s) \rangle dW_s
\\ &\quad + \int_0^{t \wedge \tau} \int_{E} \langle \zeta, \gamma(r_{s-},x) \rangle
(\mu(ds,dx) - F(dx)ds)
\\ &= \Big\langle \zeta, h_0 + \int_0^{t \wedge \tau} (A r_s + \alpha(r_s))ds + \int_0^{t \wedge \tau}
\sigma(r_s) dW_s
\\ &\quad + \int_0^{t \wedge \tau} \int_{E} \gamma(r_{s-},x) (\mu(ds,dx) - F(dx)ds) \Big\rangle, \quad t \geq 0.
\end{align*}
Since $\mathcal{D}(A^*)$ is dense in $H$, we get $\mathbb{P}$-almost surely
\begin{align*}
r_{t \wedge \tau} &= h_0 + \int_{0}^{t \wedge \tau} \big( A r_s + \alpha(r_s) \big) ds + \int_{0}^{t \wedge \tau} \sigma(r_s) dW_s
\\ &\quad + \int_{0}^{t \wedge \tau} \int_E \gamma(r_{s-},x) (\mu(ds,dx) - F(dx)ds), \quad t \geq 0,
\end{align*}
showing that $r$ is a local strong solution to (\ref{SPDE}) with lifetime $\tau$.
\end{proof}

\begin{remark}\label{remark-mu}
According to \cite[Proposition~II.1.14]{Jacod-Shiryaev}, there exist a sequence $(\tau_n)_{n \in \mathbb{N}}$ of finite stopping times with
$[\![ \tau_n ]\!] \cap [\![ \tau_m ]\!] = \emptyset$ for $n \neq m$ and an $E$-valued optional process $\xi$ such that for every optional process $\delta : \Omega \times \mathbb{R}_+ \times E \rightarrow H$ with
\begin{align}\label{gamma-mu-integrable}
\mathbb{P} \bigg( \int_0^t \int_E \| \delta(s,x) \| \mu(ds,dx) < \infty \bigg) = 1 \quad \text{for all $t \geq 0$}
\end{align}
we have
\begin{align}\label{integrate-mu}
\int_0^t \int_E \delta(s,x) \mu(ds,dx) = \sum_{n \in \mathbb{N}} \delta(\tau_n,\xi_{\tau_n}) \mathbbm{1}_{\{ \tau_n \leq t \}}, \quad t \geq 0.
\end{align}
Furthermore, for every predictable process $\delta : \Omega \times \mathbb{R}_+ \times E \rightarrow H$ with
\begin{align*}
\mathbb{P} \bigg( \int_0^t \int_E \| \delta(s,x) \|^2 F(dx)ds < \infty \bigg) = 1 \quad \text{for all $t \geq 0$}
\end{align*}
the jumps of the integral process
\begin{align*}
X_t := \int_0^t \int_E \delta(s,x) (\mu(ds,dx) - F(dx)ds), \quad t \geq 0
\end{align*}
are given by
\begin{align}\label{jumps-of-integral}
\Delta X_t = \delta(t,\xi_t) \sum_{n \in \mathbb{N}} \mathbbm{1}_{\{ \tau_n = t \}}, \quad t \geq 0,
\end{align}
see \cite[Section~II.1.d]{Jacod-Shiryaev}.
\end{remark}

\begin{lemma}\label{lemma-jumps-of-SPDE}
Let $r = r^{(h_0)}$ be a local weak solution to (\ref{SPDE}) with lifetime $\tau > 0$ for some $\mathcal{F}_0$-measurable random variable $h_0 : \Omega \rightarrow H$. Then the following statements are true:
\begin{enumerate}
\item The jumps of the stopped process $r^{\tau}$ are given by
\begin{align*}
\Delta r_{t \wedge \tau} = \gamma(r_{(t \wedge \tau)-},\xi_{t \wedge \tau}) \sum_{n \in \mathbb{N}} \mathbbm{1}_{\{ \tau_n = t \wedge \tau \}}, \quad t \geq 0.
\end{align*}

\item For each $n \in \mathbb{N}$ we have
\begin{align*}
\Delta r_{\tau_n} \mathbbm{1}_{\{ \tau_n \leq \tau \}} = \gamma(r_{\tau_n -}, \xi_{\tau_n}) \mathbbm{1}_{\{ \tau_n \leq \tau \}}.
\end{align*}
\end{enumerate}
\end{lemma}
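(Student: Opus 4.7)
The plan is to exploit the weak-solution identity by testing against functionals $\zeta \in \mathcal{D}(A^*)$, which is dense in $H$. Fix such a $\zeta$. The defining identity for a local weak solution expresses $\langle \zeta, r_{t\wedge\tau}\rangle$ as a sum of four contributions: (i) the initial value $\langle \zeta,h_0\rangle$, (ii) a Lebesgue integral $\int_0^{t\wedge\tau}(\langle A^*\zeta,r_s\rangle + \langle \zeta,\alpha(r_s)\rangle)\,ds$, which is absolutely continuous in $t$, (iii) the stochastic integral $\int_0^{t\wedge\tau}\langle \zeta,\sigma(r_s)\rangle\,dW_s$, which is a continuous local martingale, and (iv) the compensated Poisson integral
\begin{equation*}
N_t^{\zeta}:=\int_0^{t\wedge\tau}\!\int_E \langle \zeta,\gamma(r_{s-},x)\rangle\,(\mu(ds,dx)-F(dx)ds).
\end{equation*}
Only the last contributes jumps, so $\Delta\langle \zeta,r_{t\wedge\tau}\rangle = \Delta N_t^{\zeta}$.

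Next I would identify the jumps of $N^{\zeta}$ using Remark~\ref{remark-mu}. The stopped compensated integral $N^{\zeta}$ equals the unstopped integral with predictable integrand $\mathbbm{1}_{\{s\leq\tau\}}\langle \zeta,\gamma(r_{s-},x)\rangle$, to which formula (\ref{jumps-of-integral}) applies and yields
\begin{equation*}
\Delta N_t^{\zeta}=\mathbbm{1}_{\{t\leq\tau\}}\,\langle \zeta,\gamma(r_{t-},\xi_t)\rangle\sum_{n\in\mathbb{N}}\mathbbm{1}_{\{\tau_n=t\}}.
\end{equation*}
Splitting according to $t\leq\tau$ or $t>\tau$ (in the latter case $r^{\tau}$ is constant past $\tau$, so $\Delta r_{t\wedge\tau}=\Delta r_\tau$, and the indicator $\mathbbm{1}_{\{\tau_n=t\wedge\tau\}}=\mathbbm{1}_{\{\tau_n=\tau\}}$ on the right), one verifies in both cases
\begin{equation*}
\langle \zeta,\Delta r_{t\wedge\tau}\rangle=\Big\langle \zeta,\gamma(r_{(t\wedge\tau)-},\xi_{t\wedge\tau})\sum_{n\in\mathbb{N}}\mathbbm{1}_{\{\tau_n=t\wedge\tau\}}\Big\rangle.
\end{equation*}
Since $\mathcal{D}(A^*)$ is dense in $H$, the identity passes to $H$-valued equality, which is exactly statement (1).

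For statement (2), I would substitute $t=\tau_n$ in (1) and restrict to the event $\{\tau_n\leq\tau\}$, on which $\tau_n\wedge\tau=\tau_n$. The disjoint-graph property of the sequence $(\tau_n)_{n\in\mathbb{N}}$ from Remark~\ref{remark-mu} gives $\sum_{m\in\mathbb{N}}\mathbbm{1}_{\{\tau_m=\tau_n\}}=1$ on $\{\tau_n<\infty\}$, which holds automatically since all $\tau_n$ are finite. Multiplying through by $\mathbbm{1}_{\{\tau_n\leq\tau\}}$ yields the claim. The only delicate point of the argument is the careful handling of the stopping: one must confirm that the stopped compensated Poisson integral admits the jump representation (\ref{jumps-of-integral}) via its predictable integrand $\mathbbm{1}_{\{s\leq\tau\}}\langle \zeta,\gamma(r_{s-},x)\rangle$, and that on the set $\{t>\tau\}$ the right-hand side of (1) correctly collapses to the jump at $\tau$. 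Everything else is a routine application of the weak-solution identity and the density of $\mathcal{D}(A^*)$.
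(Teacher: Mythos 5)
Your argument is correct and follows essentially the same route as the paper: test the weak-solution identity against $\zeta \in \mathcal{D}(A^*)$, observe that only the compensated Poisson integral contributes jumps, apply the jump formula (\ref{jumps-of-integral}) from Remark \ref{remark-mu}, and conclude by density of $\mathcal{D}(A^*)$, with statement (2) following from the disjointness of the graphs $[\![ \tau_n ]\!]$. Your extra care with the stopped integrand $\mathbbm{1}_{\{s \leq \tau\}}\langle \zeta,\gamma(r_{s-},x)\rangle$ and the case $t > \tau$ only makes explicit what the paper's proof leaves implicit.
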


\begin{proof}
Let $X$ be the process
\begin{align*}
X_t := \int_0^{t \wedge \tau} \int_E \langle \zeta, \gamma(r_{s-},x) \rangle ( \mu(ds,dx) - F(dx)ds ), \quad t \geq 0.
\end{align*}
Since $r$ is a local weak solution to (\ref{SPDE}), for every $\zeta \in \mathcal{D}(A^*)$ we have, by using (\ref{jumps-of-integral}),
\begin{align*}
\langle \zeta, \Delta r_{t \wedge \tau} \rangle &= \Delta \langle \zeta, r_{t \wedge \tau} \rangle = \Delta X_{t \wedge \tau} = \langle \zeta, \gamma(r_{(t \wedge \tau)-},\xi_{t \wedge \tau}) \rangle \sum_{n \in \mathbb{N}} \mathbbm{1}_{\{ \tau_n = t \wedge \tau \}}
\\ &= \Big\langle \zeta, \gamma(r_{(t \wedge \tau)-},\xi_{t \wedge \tau}) \sum_{n \in \mathbb{N}} \mathbbm{1}_{\{ \tau_n = t \wedge \tau \}} \Big\rangle, \quad t \geq 0.
\end{align*}
Taking into account that $\mathcal{D}(A^*)$ is dense in $H$, the first statement follows. Since $[\![ \tau_n ]\!] \cap [\![ \tau_m ]\!] = \emptyset$ for $n \neq m$, we deduce that
\begin{align*}
\Delta r_{\tau_n} \mathbbm{1}_{\{ \tau_n \leq \tau \}} &= \Delta r_{\tau_n \wedge \tau} \mathbbm{1}_{\{ \tau_n \leq \tau \}} = \bigg( \gamma(r_{(\tau_n \wedge \tau)-},\xi_{\tau_n \wedge \tau}) \sum_{m \in \mathbb{N}} \mathbbm{1}_{\{ \tau_m = \tau_n \wedge \tau \}} \bigg) \mathbbm{1}_{\{ \tau_n \leq \tau \}}
\\ &= \bigg( \gamma(r_{\tau_n-},\xi_{\tau_n}) \sum_{m \in \mathbb{N}} \mathbbm{1}_{\{ \tau_m = \tau_n\}} \bigg) \mathbbm{1}_{\{ \tau_n \leq \tau \}} = \gamma(r_{\tau_n -}, \xi_{\tau_n}) \mathbbm{1}_{\{ \tau_n \leq \tau \}}
\end{align*}
for each $n \in \mathbb{N}$, establishing the second statement.
\end{proof}

From now on, we fix mappings $\alpha : H \rightarrow H$, $\sigma^j : H \rightarrow H$, $j \in \mathbb{N}$, $\gamma : H \times E \rightarrow H$ satisfying the following regularity conditions:
\begin{itemize}
\item The mapping $\alpha : H \rightarrow H$ is locally Lipschitz continuous, that is, for each $n \in \mathbb{N}$ there is a constant $L_n \geq 0$ such that
\begin{align}\label{Lipschitz-alpha-st}
\| \alpha(h_1) - \alpha(h_2) \| \leq L_n \| h_1 - h_2 \|, \quad h_1, h_2 \in H \text{ with } \| h_1 \|, \| h_2 \| \leq n.
\end{align}

\item For each $n \in \mathbb{N}$ there exists a sequence $(\kappa_n^j)_{j \in \mathbb{N}} \subset \mathbb{R}_+$ with $\sum_{j \in \mathbb{N}} (\kappa_n^j)^2 < \infty$ such that for all $j \in \mathbb{N}$ the mapping $\sigma^j : H \rightarrow H$ satisfies
\begin{align}\label{Lipschitz-sigma-st}
\| \sigma^j(h_1) - \sigma^j(h_2) \| &\leq \kappa_n^j \| h_1 - h_2 \|, \quad h_1,h_2 \in H \text{ with } \| h_1 \|, \| h_2 \| \leq n,
\\ \label{linear-growth-sigma} \| \sigma^j(h) \| &\leq \kappa_n^j, \quad h \in H \text{ with } \| h \| \leq n.
\end{align}
Consequently, for each $j \in \mathbb{N}$ the mapping $\sigma^j$ is locally Lipschitz continuous.

\item The mapping $\gamma : H \times E \rightarrow H$ is measurable, and for each $n \in \mathbb{N}$ there exists a measurable function $\rho_n : E \rightarrow \mathbb{R}_+$ with
\begin{align}\label{rho-square-int}
\int_E \rho_n(x)^2 F(dx) < \infty.
\end{align}
such that for all $x \in E$ the mapping $\gamma(\bullet,x) : H \rightarrow H$ satisfies
\begin{align}\label{Lipschitz-gamma-rho}
\| \gamma(h_1,x) - \gamma(h_2,x) \| &\leq \rho_n(x) \| h_1 - h_2 \|, \quad h_1,h_2 \in H \text{ with } \| h_1 \|, \| h_2 \| \leq n,
\\ \label{linear-growth-rho} \| \gamma(h,x) \| &\leq \rho_n(x), \quad h \in H \text{ with } \| h \| \leq n.
\end{align}
Consequently, for each $x \in E$ the mapping $\gamma(\bullet,x)$ is locally Lipschitz continuous.

\item We assume that for each $j \in \mathbb{N}$ the mapping $\sigma^j : H \rightarrow H$ is continuously differentiable, that is
\begin{align}\label{sigma-C1}
\sigma^j \in C^1(H) \quad \text{for all $j \in \mathbb{N}$.}
\end{align}
\end{itemize}
Using the identification $L_2^0(H) \cong \ell^2(H)$, which holds true by the isometric isomorphism defined in (\ref{isom-isom}), we can identify the sequence $(\sigma^j)_{j \in \mathbb{N}}$ of mappings $\sigma^j : H \rightarrow H$ with a locally Lipschitz continuous mapping $\sigma : H \rightarrow L_2^0(H)$, and, in view of (\ref{Wiener-int-expansion}), equation (\ref{SPDE}) can be rewritten equivalently
\begin{align}\label{SPDE-j}
\left\{
\begin{array}{rcl}
dr_t & = & (Ar_t + \alpha(r_t))dt + \sum_{j \in \mathbb{N}} \sigma^j(r_t) d\beta_t^j
\\ && + \int_E \gamma(r_{t-},x) (\mu(dt,dx) - F(dx) dt)
\medskip
\\ r_0 & = & h_0.
\end{array}
\right.
\end{align}

\begin{definition}\label{def-prelocal}
Let $B_1 \subset B_2 \subset H$ be two nonempty Borel sets. $B_1$ is called \emph{prelocally invariant} in $B_2$ for (\ref{SPDE-j}), if for all $h_0 \in B_1$ there exists a local mild solution $r = r^{(h_0)}$ to (\ref{SPDE-j}) with lifetime $\tau > 0$ such that $(r^{\tau})_- \in B_1$ and $r^{\tau} \in B_2$ up to an evanescent set.
\end{definition}

\begin{lemma}\label{lemma-jumps-global-set}
Let $B_1 \subset B_2 \subset H$ be two Borel sets such that $B_1$ is prelocally invariant in $B_2$ for (\ref{SPDE}). Then we have
\begin{align*}
h + \gamma(h,x) \in \overline{B}_2 \quad \text{ for $F$-almost all $x \in E$,} \quad \text{for all $h \in B_1$.}
\end{align*}
\end{lemma}

\begin{proof}
We denote by
\begin{align*}
d_{B_2} : H \rightarrow \mathbb{R}_+, \quad d_{B_2}(h) := \inf_{g \in B_2} \| h-g \|
\end{align*}
the distance function of the set $B_2$. Since
\begin{align*}
|d_{B_2}(h_1) - d_{B_2}(h_2)| \leq \| h_1 - h_2 \| \quad \text{for all $h_1,h_2 \in H$,}
\end{align*}
by the linear growth condition (\ref{linear-growth-rho}), for all $n \in \mathbb{N}$, all $h \in \overline{B}_2$ with $\| h \| \leq n$ and all $x \in E$ we have
\begin{align}\label{distance-Lip-1}
| d_{B_2}(h + \gamma(h,x)) | = | d_{B_2}(h + \gamma(h,x)) - d_{B_2}(h) | \leq \| \gamma(h,x) \| \leq \rho_n(x).
\end{align}
Thus, by (\ref{Lipschitz-gamma-rho}) and Lebesgue's dominated convergence theorem, the mapping
\begin{align}\label{mapping-int-dist}
\overline{B}_2 \rightarrow \mathbb{R}, \quad h \mapsto \int_E |d_{B_2}(h + \gamma(h,x))|^2 F(dx)
\end{align}
is continuous. Now, let $h \in B_1$ be arbitrary. Since $B_1$ is prelocally invariant in $B_2$ for (\ref{SPDE}), there exists a local mild solution $r = r^{(h)}$ to (\ref{SPDE}) with lifetime $\tau > 0$ such that $(r^{\tau})_- \in B_1$ and $r^{\tau} \in B_2$ up to an evanescent set. Taking into account \cite[Theorem~II.1.8]{Jacod-Shiryaev}, identity (\ref{integrate-mu}) and Lemma~\ref{lemma-jumps-of-SPDE}, we obtain
\begin{align*}
&\mathbb{E} \bigg[ \int_0^{\tau} \int_E | d_{B_2}(r_{s-} + \gamma(r_{s-},x)) |^2 F(dx)ds \bigg]
\\ &= \mathbb{E} \bigg[ \int_0^{\tau} \int_E | d_{B_2}(r_{s-} + \gamma(r_{s-},x)) |^2 \mu(ds,dx) \bigg]
\\ &= \mathbb{E} \bigg[ \sum_{n \in \mathbb{N}} | d_{B_2}(r_{\tau_n -} + \gamma(r_{\tau_n -},\xi_{\tau_n})) |^2 \mathbbm{1}_{\{ \tau_n \leq \tau \}} \bigg]
\\ &= \mathbb{E} \bigg[ \sum_{n \in \mathbb{N}} | d_{B_2}(r_{\tau_n -} + \Delta r_{\tau_n} ) |^2 \mathbbm{1}_{\{ \tau_n \leq \tau \}} \bigg]
= \mathbb{E} \bigg[ \sum_{n \in \mathbb{N}} | d_{B_2}(r_{\tau_n}) |^2 \mathbbm{1}_{\{ \tau_n \leq \tau \}} \bigg] = 0.
\end{align*}
Therefore, we have $\mathbb{P}$-almost surely
\begin{align}\label{int-distance}
\int_0^{\tau} \bigg( \int_E | d_{B_2}(r_{s-} + \gamma(r_{s-},x)) |^2 F(dx) \bigg) ds = 0, \quad t \geq 0.
\end{align}
Since the process $r$ is c\`{a}dl\`{a}g with $(r^{\tau})_- \in B_1$ up to an evanescent set and the mapping (\ref{mapping-int-dist}) is continuous, the integrand appearing in (\ref{int-distance}) is continuous in $s=0$. Thus, we deduce that
\begin{align*}
\int_E | d_{B_2}(h + \gamma(h,x)) |^2 F(dx) = 0.
\end{align*}
This provides
\begin{align*}
d_{B_2}(h + \gamma(h,x)) = 0 \quad \text{for $F$-almost all $x \in E$,}
\end{align*}
and hence
\begin{align*}
h + \gamma(h,x) \in \overline{B}_2 \quad \text{for $F$-almost all $x \in E$,}
\end{align*}
completing the proof.
\end{proof}

\begin{lemma}\label{lemma-jump-global-help}
Let $B_1 \subset B_2 \subset H$ be two Borel sets such that
\begin{align}\label{jumps-provided-suff}
h + \gamma(h,x) \in B_2 \quad \text{ for $F$-almost all $x \in E$,} \quad \text{for all $h \in B_1$.}
\end{align}
Let $h_0 : \Omega \rightarrow H$ be a $\mathcal{F}_0$-measurable random variable and let $r = r^{(h_0)}$ be a local mild solution to (\ref{SPDE}) with lifetime $\tau > 0$ such that $(r^{\tau})_- \in B_1$ and $r^{\tau} \mathbbm{1}_{[\![ 0,\tau [\![} \in B_2$ up to an evanescent set. Then we have $r^{\tau} \in B_2$ up to an evanescent set.
\end{lemma}

\begin{proof}
Since $r^{\tau} \mathbbm{1}_{[\![ 0,\tau [\![} \in B_2$ up to an evanescent set, it suffices to prove that
\begin{align}\label{suffices-jumps}
\mathbb{P}(r_{\tau} \mathbbm{1}_{\{ \tau < \infty \}} \in B_2) = 1.
\end{align}
By (\ref{integrate-mu}), \cite[Theorem~II.1.8]{Jacod-Shiryaev} and (\ref{jumps-provided-suff}) we obtain
\begin{align*}
&\mathbb{E} \bigg[ \sum_{n \in \mathbb{N}} \mathbbm{1}_{\{ r_{\tau_n -} + \gamma(r_{\tau_n -}, \xi_{\tau_n}) \notin B_2 \} } \bigg] = \mathbb{E} \bigg[ \int_0^{\infty} \int_E \mathbbm{1}_{\{ r_{s-} + \gamma(r_{s-}, x) \notin B_2 \} } \mu(ds,dx) \bigg]
\\ &= \mathbb{E} \bigg[ \int_0^{\infty} \int_E \mathbbm{1}_{\{ r_{s-} + \gamma(r_{s-}, x) \notin B_2 \} } F(dx)ds \bigg] = 0,
\end{align*}
which yields
\begin{align*}
\mathbb{P} ( r_{\tau_n -} + \gamma(r_{\tau_n -}, \xi_{\tau_n}) \in B_2 \text{ for all } n \in \mathbb{N} ) = 1.
\end{align*}
Therefore, by Lemma~\ref{lemma-jumps-of-SPDE} we obtain $\mathbb{P}$-almost surely
\begin{align*}
r_{\tau} \mathbbm{1}_{\{ \tau < \infty \}} = ( r_{\tau -} + \Delta r_{\tau} ) \mathbbm{1}_{\{ \tau < \infty \}} = \bigg( r_{\tau -} + \gamma(r_{\tau-},\xi_{\tau}) \sum_{n \in \mathbb{N}} \mathbbm{1}_{ \{ \tau_n = \tau \} } \bigg) \mathbbm{1}_{\{ \tau < \infty \}} \in B_2,
\end{align*}
proving (\ref{suffices-jumps}).
\end{proof}

\begin{lemma}
Let $B \subset C \subset H$ be two Borel sets such that $C$ is closed in $H$ and
\begin{align*}
h + \gamma(h,x) \in C \quad \text{ for $F$-almost all $x \in E$,} \quad \text{for all $h \in B$.}
\end{align*}
Let $h_0 : \Omega \rightarrow H$ be a $\mathcal{F}_0$-measurable random variable and let $r = r^{(h_0)}$ be a local mild solution to (\ref{SPDE}) with lifetime $\tau > 0$ such that $(r^{\tau})_- \in B$ up to an evanescent set. Then we have $r^{\tau} \in C$ up to an evanescent set.
\end{lemma}

\begin{proof}
By the closedness of $C$ in $H$, we have $r^{\tau} \mathbbm{1}_{[\![ 0,\tau [\![} \in C$ up to an evanescent set. Thus, the statement follows from Lemma~\ref{lemma-jump-global-help}.
\end{proof}

\begin{lemma}\label{lemma-jumps-exact}
Let $G_1,G_2$ be metric spaces such that $G_1$ is separable. Let
$B \subset G_1$ be a Borel set, let $C \subset G_2$ be a closed set and let $\delta : G_1 \times E \rightarrow G_2$ be a measurable mapping such that $\delta(\bullet,x) : G_1 \rightarrow G_2$ is continuous for all $x \in E$. Suppose that
\begin{align}\label{jumps-for-proof}
\delta(h,x) \in C \quad \text{for $F$-almost all $x \in E$,} \quad \text{for all $h \in B$.}
\end{align}
Then we even have
\begin{align}\label{jumps-provided}
\delta(h,x) \in C \quad \text{for all $h \in B$,} \quad \text{ for $F$-almost all $x \in E$.}
\end{align}
\end{lemma}

\begin{proof}
By separability of $G_1$ there exists a countable set $D$, which is dense in $B$. By (\ref{jumps-for-proof}), for each $h \in D$ there exists a $F$-nullset $N_h$ such that
\begin{align*}
\delta(h,x) \in C \quad \text{for all $x \in N_h^c$.}
\end{align*}
The set $N := \bigcup_{h \in D} N_h$ is also a $F$-nullset. Now, let $h \in B$ be arbitrary. Then there exists a sequence $(h_n)_{n \in \mathbb{N}} \subset D$ with $h_n \rightarrow h$, and we obtain
\begin{align*}
\delta(h_n,x) \in C \quad \text{for all $n \in \mathbb{N}$ and $x \in N^c$.}
\end{align*}
Since $\delta(\bullet,x)$ is continuous for all $x \in E$ and the set $C$ is closed in $G_2$, we deduce
\begin{align*}
\delta(h,x) = \lim_{n \rightarrow \infty} \delta(h_n,x) \in C \quad \text{for all $x \in N^c$,}
\end{align*}
providing (\ref{jumps-provided}).
\end{proof}

Recall that a closed, convex cone $C$ is a nonempty, closed subset $C \subset H$  such that $h+g \in C$ for all $h,g \in C$ and $\lambda h \in C$ for all $\lambda \geq 0$ and $h \in C$.

\begin{lemma}\label{lemma-int-in-convex}
Let $(G, \mathcal{G}, \nu)$ be a $\sigma$-finite measure space, let $C \subset H$ be a closed, convex cone and let $f \in \mathcal{L}^1(G; H)$ be such that $f(x) \in C$ for $\nu$-almost all $x \in G$. Then we have
\begin{align*}
\int_G f d\nu \in C.
\end{align*}
\end{lemma}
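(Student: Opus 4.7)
The plan is to combine the Hahn--Banach separation theorem with the linearity of the Bochner integral. The key observation is that, since $C$ is a closed convex cone in the Hilbert space $H$, it admits the dual representation
\begin{align*}
C = \{ h \in H : \langle \zeta, h \rangle \geq 0 \text{ for all } \zeta \in C^{\ast} \},
\end{align*}
where $C^{\ast} := \{ \zeta \in H : \langle \zeta, h \rangle \geq 0 \text{ for all } h \in C \}$ denotes the polar (dual) cone. The inclusion ``$\subset$'' is immediate from the definition of $C^{\ast}$. For the reverse inclusion, if $h_0 \notin C$, then by Hahn--Banach separation (applied to the closed convex set $C$ and the point $h_0$) there exist $\zeta \in H$ and $c \in \mathbb{R}$ with $\langle \zeta, h \rangle \geq c > \langle \zeta, h_0 \rangle$ for all $h \in C$; the cone property of $C$ (namely $\lambda h \in C$ for all $\lambda \geq 0$) forces $c \leq 0$ and $\langle \zeta, h \rangle \geq 0$ on $C$, so $\zeta \in C^{\ast}$ while $\langle \zeta, h_0 \rangle < 0$.

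Given this representation, it suffices to fix an arbitrary $\zeta \in C^{\ast}$ and show that $\langle \zeta, \int_G f\, d\nu \rangle \geq 0$. Since $\zeta$ defines a bounded linear functional on $H$, the defining property of the Bochner integral yields
\begin{align*}
\Big\langle \zeta, \int_G f\, d\nu \Big\rangle = \int_G \langle \zeta, f(x) \rangle \, d\nu(x).
\end{align*}
By hypothesis, $f(x) \in C$ for $\nu$--almost all $x \in G$, and by definition of $C^{\ast}$ this gives $\langle \zeta, f(x) \rangle \geq 0$ for $\nu$--almost all $x \in G$. Hence the right-hand side is nonnegative, and since $\zeta \in C^{\ast}$ was arbitrary, we conclude $\int_G f\, d\nu \in C$.

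The main (and essentially only) obstacle is the Hahn--Banach step that establishes the bipolar identity $C = C^{\ast \ast}$; everything else is bookkeeping. Note that $\sigma$-finiteness of $\nu$ plays no explicit role beyond ensuring that the Bochner integral $\int_G f\, d\nu$ is well defined for $f \in \mathcal{L}^1(G;H)$, and the convexity of $C$ enters only through the separation theorem (the cone property handles the translation so that the separating hyperplane may be taken through the origin). No approximation by simple functions is needed.
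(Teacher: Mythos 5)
Your argument is correct, but it takes a genuinely different route from the paper. The paper proves the lemma "from the inside": it first checks the claim for simple functions $f = \sum_{k=1}^m c_k \mathbbm{1}_{A_k}$ with $c_k \in C$ and $\nu(A_k) < \infty$, where the conclusion is immediate from the cone axioms, and then approximates a general $f \in \mathcal{L}^1(G;H)$ by such simple functions in $\mathcal{L}^1$ and invokes the closedness of $C$ to pass to the limit. You instead argue "from the outside" via duality: the bipolar identity $C = C^{\ast\ast}$ (Hahn--Banach separation, with the cone property pushing the separating hyperplane through the origin, using $0 \in C$), combined with the fact that bounded linear functionals commute with the Bochner integral, reduces the statement to the scalar inequality $\int_G \langle \zeta, f(x)\rangle \, d\nu(x) \geq 0$ for $\zeta \in C^{\ast}$. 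Both proofs are complete; yours trades the simple-function approximation (which the paper borrows from Da Prato--Zabczyk) for the separation theorem, uses only convexity and closedness through the dual description, and as you note makes transparent that $\sigma$-finiteness is not really needed beyond well-definedness of the integral, whereas the paper's approach is more elementary in that it uses nothing beyond the definition of the Bochner integral and the closedness of $C$.
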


\begin{proof}
First, we assume that $f \in \mathcal{L}^1(G; H)$ is a simple function of the form
\begin{align}\label{f-simple}
f = \sum_{k=1}^m c_k \mathbbm{1}_{A_k}
\end{align}
with $c_k \in C$ and $A_k \in \mathcal{G}$ satisfying $\nu(A_k) < \infty$ for $k = 1,\ldots,m$. Then we have
\begin{align*}
\int_G f d\nu = \sum_{k=1}^m c_k \nu(A_k) \in C.
\end{align*}
Now, let $f \in \mathcal{L}^1(G; H)$ be an arbitrary function such that $f(x) \in C$ for $\nu$-almost all $x \in G$. Arguing as in the proof of \cite[Lemma~1.1]{Da_Prato}, there exists a a sequence $(f_n)_{n \in \mathbb{N}}$ of simple functions of the form (\ref{f-simple}) such that $f_n \rightarrow f$ in $\mathcal{L}^1(G;H)$. Therefore, we get
\begin{align*}
\int_G f d\nu = \lim_{n \rightarrow \infty} \int_G f_n d\nu \in C,
\end{align*}
finishing the proof.
\end{proof}

\begin{lemma}\label{lemma-int-mu-pos}
Let $C \subset H$ be a closed, convex cone and let $\delta : \Omega \times \mathbb{R}_+ \times E \rightarrow H$ be an optional process satisfying (\ref{gamma-mu-integrable}) such that
\begin{align*}
\delta(\bullet,x) \in C \quad \text{up to an evanescent set,} \quad \text{for $F$-almost all $x \in E$.}
\end{align*}
Then we have $X \in C$ up to an evanescent set, where $X$ denotes the integral process
\begin{align*}
X_t := \int_0^{t} \int_E \delta(s,x) \mu(ds,dx), \quad t \geq 0.
\end{align*}
\end{lemma}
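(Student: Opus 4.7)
Plan: By Remark~\ref{remark-mu}, the representation (\ref{integrate-mu}) applied to the optional process $\gamma$ yields, $\mathbb{P}$-almost surely,
\begin{align*}
X_t = \sum_n \gamma(\tau_n,\xi_{\tau_n})\,\mathbbm{1}_{\{\tau_n\le t\}}, \qquad t\ge 0,
\end{align*}
where $(\tau_n)_{n\in\mathbb{N}}$ are the finite stopping times with pairwise disjoint graphs and $\xi$ is the $E$-valued optional process of that remark. The plan is to show first that almost surely $\gamma(\tau_n,\xi_{\tau_n})\in C$ for every $n$ with $\tau_n<\infty$, and then deduce $X_t\in C$ by closedness and convexity of the cone $C$.

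For the first step, set $D:=\{(\omega,s,x) : \gamma(\omega,s,x)\notin C\}$, which is measurable since $C$ is closed. The hypothesis provides an $F$-null set $N_0\subset E$ such that for every $x\in E\setminus N_0$, the $x$-section $D_x\subset\Omega\times\mathbb{R}_+$ is evanescent; in particular, for every $s\ge 0$ and $F$-almost all $x\in E$, $\mathbb{P}(\mathbbm{1}_D(\omega,s,x)=1)=0$, so the predictable projection ${}^p\mathbbm{1}_D$ vanishes $\mathbb{P}\otimes dt\otimes F$-almost everywhere. Combining the representation above with the compensator formula for the Poisson random measure $\mu$ (with predictable compensator $dt\otimes F(dx)$), I would obtain
\begin{align*}
\mathbb{E}\Bigl[\sum_n \mathbbm{1}_D(\tau_n,\xi_{\tau_n})\,\mathbbm{1}_{\{\tau_n\le t\}}\Bigr]
= \mathbb{E}\int_0^t\!\int_E {}^p\mathbbm{1}_D(\omega,s,x)\,F(dx)\,ds = 0
\end{align*}
for every $t\ge 0$. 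Since the summand on the left is non-negative, it vanishes almost surely for each $t$, and taking $t$ along the rationals shows that almost surely $\gamma(\tau_n,\xi_{\tau_n})\in C$ for all $n$ with $\tau_n<\infty$.

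For the second step, the absolute convergence assumption (\ref{gamma-mu-integrable}) ensures that for each $t$ the series defining $X_t$ converges in $H$. Every finite partial sum is a non-negative combination of vectors of $C$, hence lies in $C$ (the reasoning underlying Lemma~\ref{lemma-int-in-convex} applied with counting measure); closedness of $C$ then yields $X_t\in C$ for every $t\ge 0$ on the common full-measure set. Since $X$ is a pure-jump process with jumps supported on the countable set $\{\tau_n\}$, the set $\{X\in C\}$ is either evanescent or the whole space modulo an evanescent set, so we conclude $X\in C$ up to an evanescent set.

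The main technical obstacle is the justification of the compensator formula for the optional (in general not predictable) integrand $\mathbbm{1}_D$. This is handled via the predictable projection, exploiting that the intensity measure $dt\otimes F$ is predictable so that integrating an optional indicator against it agrees with integrating its predictable projection (see \cite{Jacod-Shiryaev}). Alternatively, one can approximate $\gamma$ by predictable processes and pass to the limit, but the predictable-projection route is cleaner since the evanescence of each $D_x$ directly forces ${}^p\mathbbm{1}_D$ to vanish.
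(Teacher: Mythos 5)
Your second step (finite partial sums of elements of the cone stay in $C$, then pass to the limit by closedness) is fine and is exactly what the paper does implicitly. The genuine gap is the first equality in your display. For the optional set $D=\{(\omega,s,x):\gamma(\omega,s,x)\notin C\}$ you assert
$\mathbb{E}\bigl[\sum_n \mathbbm{1}_D(\tau_n,\xi_{\tau_n})\mathbbm{1}_{\{\tau_n\le t\}}\bigr]=\mathbb{E}\int_0^t\int_E {}^p\mathbbm{1}_D(\omega,s,x)\,F(dx)\,ds$,
i.e.\ a compensation formula for a merely optional integrand with the ordinary predictable projection plugged in. The compensation formula \cite[Thm.~II.1.8]{Jacod-Shiryaev} holds for $\mathcal{P}\otimes\mathcal{E}$-measurable integrands, and your proposed repair is false: take $D=\{(\omega,s,x):\mu(\omega;\{s\}\times\{x\})=1\}$, the support of $\mu$ itself, which is $\mathcal{O}\otimes\mathcal{E}$-measurable (use the optional mark process $\xi$ and separability of the Blackwell space $E$). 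If $F$ is atomless, every section $D_x$ is evanescent, so a jointly measurable version of ${}^p\mathbbm{1}_D$ vanishes $\mathbb{P}\otimes dt\otimes F$-a.e., while the left-hand side equals $\mathbb{E}[\mu([0,t]\times E)]=tF(E)>0$. The correct extension to optional integrands replaces $W$ by its conditional expectation with respect to the Dol\'eans measure of $\mu$ given the predictable $\sigma$-field, and that object does not vanish merely because the fixed-$x$ sections of $D$ are evanescent; the same example shows that per-$x$ evanescence by itself does not control $\gamma$ on the graph of $\mu$, so your Step 1 cannot be rescued by any projection argument of this kind.

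The paper's proof never applies the compensation formula to a random integrand: it takes the $F$-nullset $N\subset E$ of marks on which the hypothesis fails, applies \cite[Thm.~II.1.8]{Jacod-Shiryaev} to the deterministic (hence predictable) integrand $\mathbbm{1}_N(x)$ to conclude that $\mathbb{P}$--almost surely no $\xi_{\tau_n}$ lies in $N$, and then inserts the evanescence in $(\omega,t)$ directly into the representation (\ref{integrate-mu}), finishing with the cone/closedness argument you also give (compare Lemma \ref{lemma-int-in-convex}). If you want to keep your structure, you must reduce to a predictable (here: deterministic in $(\omega,s)$) integrand as the paper does, or work with an exceptional set that does not depend on $x$; the optional-projection shortcut is precisely the step that fails.
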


\begin{proof}
By assumption, there is a $F$-nullset $N$ such that
\begin{align*}
\delta(\bullet,x) \in C \quad \text{up to an evanescent set,} \quad \text{for all $x \in N^c$.}
\end{align*}
Using identity (\ref{integrate-mu}) and \cite[Theorem~II.1.8]{Jacod-Shiryaev} we obtain
\begin{align*}
\mathbb{E} \bigg[ \sum_{n \in \mathbb{N}} \mathbbm{1}_{\{ \xi_{\tau_n} \in N \}} \bigg] &= \mathbb{E} \bigg[ \int_0^{\infty} \int_E \mathbbm{1}_{\{ x \in N \}} \mu(ds,dx) \bigg]
\\ &= \mathbb{E} \bigg[ \int_0^{\infty} \int_E \mathbbm{1}_{\{ x \in N \}} F(dx)ds \bigg] = 0,
\end{align*}
which gives us $\mathbb{P} ( \xi_{\tau_n} \notin N \text{ for all } n \in \mathbb{N} ) = 1$.
Using (\ref{integrate-mu}) we obtain $\mathbb{P}$-almost surely
\begin{align*}
X_t = \sum_{n \in \mathbb{N}} \delta(\tau_n,\xi_{\tau_n}) \mathbbm{1}_{\{ \tau_n \leq t \}} \in C \quad \text{for all $t \geq 0$,}
\end{align*}
finishing the proof.
\end{proof}

\begin{lemma}\label{lemma-Damir}
The following statements are true:
\begin{enumerate}
\item For each $h \in H$ we have
\begin{align}\label{series-D-abs-conv}
\sum_{j \in \mathbb{N}} \| D \sigma^j(h) \sigma^j(h) \| < \infty.
\end{align}

\item The mapping
\begin{align}\label{continuity-Damir}
H \rightarrow H, \quad h \mapsto \sum_{j \in \mathbb{N}} D \sigma^j(h) \sigma^j(h)
\end{align}
is continuous.
\end{enumerate}
\end{lemma}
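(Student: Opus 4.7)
My plan is to extract a uniform bound $\|D\sigma^j(h)\| \leq \kappa_n^j$ from the local Lipschitz hypothesis, and then use it twice: once to get summability pointwise, and once to invoke a dominated-convergence argument for the series in order to obtain continuity.

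For the first statement, I would fix $h \in H$ and choose $n \in \mathbb{N}$ with $\|h\| < n$. For any unit vector $v \in H$ and $t \in \mathbb{R}$ small enough that $\|h + tv\| \leq n$, the Lipschitz estimate \eqref{Lipschitz-sigma-st} gives $\|\sigma^j(h+tv) - \sigma^j(h)\| \leq \kappa_n^j |t|$. Letting $t \to 0$ and using differentiability of $\sigma^j$ at $h$, this yields $\|D\sigma^j(h) v\| \leq \kappa_n^j$, hence $\|D\sigma^j(h)\|_{L(H)} \leq \kappa_n^j$. Combined with \eqref{linear-growth-sigma}, we obtain
\begin{align*}
\|D\sigma^j(h) \sigma^j(h)\| \leq \|D\sigma^j(h)\|_{L(H)} \, \|\sigma^j(h)\| \leq (\kappa_n^j)^2,
\end{align*}
and summing over $j$ gives \eqref{series-D-abs-conv} via $\sum_{j \in \mathbb{N}} (\kappa_n^j)^2 < \infty$.

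For the continuity statement, I would show that the partial sums $S_N(h) := \sum_{j=1}^N D\sigma^j(h)\sigma^j(h)$ converge uniformly on bounded sets to the full series. Fix a ball $\overline{B_n(0)} \subset H$; by the computation above, $\|D\sigma^j(h)\sigma^j(h)\| \leq (\kappa_{n}^j)^2$ for all $h \in \overline{B_n(0)}$, and since $\sum_j (\kappa_n^j)^2 < \infty$, the Weierstrass $M$-test yields uniform convergence of $S_N$ on $\overline{B_n(0)}$. Each $S_N$ is continuous because each $\sigma^j$ is $C^1$ (so both $h \mapsto D\sigma^j(h)$ and $h \mapsto \sigma^j(h)$ are continuous, and the bilinear evaluation map $L(H) \times H \to H$, $(T,x) \mapsto Tx$, is continuous). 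A uniform limit of continuous functions on $\overline{B_n(0)}$ is continuous; since $n$ was arbitrary, \eqref{continuity-Damir} is continuous on all of $H$.

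There is no real obstacle here; the only subtle point is extracting the operator-norm bound $\|D\sigma^j(h)\| \leq \kappa_n^j$ from the Lipschitz estimate on the open ball, which is why I would work with $\|h\| < n$ and then enlarge $n$ to cover any specific $h$. Everything else is the Weierstrass $M$-test applied to a bound that is quadratic in the sequence $(\kappa_n^j)_{j \in \mathbb{N}}$, exploiting the square-summability assumption that was built into \eqref{Lipschitz-sigma-st}.
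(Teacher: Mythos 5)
Your proposal is correct and takes essentially the same route as the paper: the operator-norm bound $\| D\sigma^j(h) \| \leq \kappa_n^j$ extracted from \eqref{Lipschitz-sigma-st}, combined with \eqref{linear-growth-sigma}, gives the summable domination $\| D\sigma^j(h)\sigma^j(h) \| \leq (\kappa_n^j)^2$, and continuity then follows from term-wise continuity (via $\sigma^j \in C^1$) together with this locally uniform domination. The only cosmetic difference is that you conclude with the Weierstrass $M$-test on balls, while the paper phrases the identical domination argument as Lebesgue's dominated convergence theorem for the counting measure on $\mathbb{N}$.
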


\begin{proof}
Let $j \in \mathbb{N}$ be arbitrary. Furthermore, let $h \in H$ be arbitrary. There exists $n \in \mathbb{N}$ such that $\| h \| \leq n$. By estimates (\ref{Lipschitz-sigma-st}), (\ref{linear-growth-sigma}) we have
\begin{align}\label{est-d-sigma-sigma}
\| D \sigma^j(h) \sigma^j(h) \| \leq \| D \sigma^j(h) \| \, \| \sigma^j(h) \| \leq (\kappa_n^j)^2.
\end{align}
Since $\sum_{j \in \mathbb{N}} (\kappa_n^j)^2 < \infty$, we have (\ref{series-D-abs-conv}), showing that the first statement holds true. For each $j \in \mathbb{N}$ the mapping
\begin{align*}
H \mapsto H, \quad D \sigma^j(h) \sigma^j(h)
\end{align*}
is continuous, because for all $h_1,h_2 \in H$ we have
\begin{align*}
&\| D \sigma^j(h_1) \sigma^j(h_1) - D \sigma^j(h_2) \sigma^j(h_2) \|
\\ &\leq \| D \sigma^j(h_1) \sigma^j(h_1) - D \sigma^j(h_1) \sigma^j(h_2) \| + \| D \sigma^j(h_1) \sigma^j(h_2) - D \sigma^j(h_2) \sigma^j(h_2) \|
\\ &\leq \| D \sigma^j(h_1) \| \, \| \sigma^j(h_1) - \sigma^j(h_2) \| + \| D \sigma^j(h_1) - D \sigma^j(h_2) \| \, \| \sigma^j(h_2) \|.
\end{align*}
Denoting by $\nu$ the counting measure on $(\mathbb{N},\mathfrak{P}(\mathbb{N}))$ given by $\nu(\{j\}) = 1$ for all $j \in \mathbb{N}$, we can express the mapping (\ref{continuity-Damir}) as
\begin{align*}
\sum_{j \in \mathbb{N}} D \sigma^j(h) \sigma^j(h) = \int_{\mathbb{N}} D \sigma^j(h) \sigma^j(h) \nu(dj).
\end{align*}
Taking into account estimate (\ref{est-d-sigma-sigma}), Lebesgue's dominated convergence theorem yields the continuity of the mapping (\ref{continuity-Damir}).
\end{proof}

\begin{lemma}\label{lemma-gamma-Bc-cont}
Let $B \in \mathcal{E}$ be a set with $F(B^c) < \infty$.
\begin{enumerate}
\item For each $h \in H$ we have
\begin{align}\label{int-Bc-fin-app}
\int_{B^c} \| \gamma(h,x) \| F(dx) < \infty.
\end{align}
\item The mappings $\alpha^{B} : H \rightarrow H$ and $\gamma^{B} : H \times E \rightarrow H$ defined as
\begin{align}\label{def-alpha-B-app}
\alpha^{B}(h) &:= \alpha(h) - \int_{B^c}
\gamma(h,x) F(dx),
\\ \label{def-gamma-B-app} \gamma^{B}(h,x) &:= \gamma(h,x) \mathbbm{1}_{B}(x)
\end{align}
also satisfy the regularity conditions (\ref{Lipschitz-alpha-st}), (\ref{Lipschitz-gamma-rho}), (\ref{linear-growth-rho}).
\end{enumerate}
\end{lemma}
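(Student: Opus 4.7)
The plan is to handle the two statements separately, with the crux of the argument being a single application of the Cauchy-Schwarz inequality that exploits both the finiteness of $F(B^c)$ and the standing square-integrability (\ref{rho-square-int}) of $\rho_n$.

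For statement (1), I would fix $h \in H$, choose $n \in \mathbb{N}$ with $\|h\| \leq n$, and use the growth bound (\ref{linear-growth-rho}) to majorize $\|\gamma(h,x)\| \leq \rho_n(x)$. Then Cauchy-Schwarz with respect to $F$ restricted to $B^c$ gives
\begin{align*}
\int_{B^c} \|\gamma(h,x)\| \, F(dx) \leq \left( \int_{B^c} \rho_n(x)^2 \, F(dx) \right)^{1/2} F(B^c)^{1/2},
\end{align*}
which is finite by (\ref{rho-square-int}) and the hypothesis $F(B^c) < \infty$. This also shows that $\int_{B^c} \rho_n(x) F(dx) < \infty$, a bound I will reuse immediately.

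For statement (2), the claim for $\gamma^B$ is essentially cosmetic: the indicator $\mathbbm{1}_B$ has modulus at most $1$, so (\ref{Lipschitz-gamma-rho}) and (\ref{linear-growth-rho}) transfer verbatim to $\gamma^B$ with the same $\rho_n$. The substantive point is verifying (\ref{Lipschitz-alpha-st}) for $\alpha^B$. Given $h_1, h_2 \in H$ with $\|h_1\|, \|h_2\| \leq n$, I would estimate
\begin{align*}
\|\alpha^B(h_1) - \alpha^B(h_2)\| &\leq \|\alpha(h_1) - \alpha(h_2)\| + \int_{B^c} \|\gamma(h_1,x) - \gamma(h_2,x)\| \, F(dx) \\
&\leq \left( L_n + \int_{B^c} \rho_n(x) \, F(dx) \right) \|h_1 - h_2\|,
\end{align*}
using (\ref{Lipschitz-alpha-st}) for $\alpha$ and (\ref{Lipschitz-gamma-rho}) for $\gamma$ pointwise in $x$, followed by the finiteness just established.

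There is no real obstacle here; the only mild subtlety is that the hypothesis on $\rho_n$ available at this point is the weaker (\ref{rho-square-int}) (square-integrability) rather than (\ref{rho-integrable}), so I must not appeal to integrability of $\rho_n$ on all of $E$, and instead exploit $F(B^c) < \infty$ to obtain integrability of $\rho_n$ on $B^c$ through Cauchy-Schwarz. Everything else is a direct verification.
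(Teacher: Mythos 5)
Your proposal is correct and follows essentially the same route as the paper: a single Cauchy--Schwarz estimate combining $F(B^c)<\infty$ with the square-integrability (\ref{rho-square-int}) of $\rho_n$, together with the pointwise bounds (\ref{Lipschitz-gamma-rho}), (\ref{linear-growth-rho}), and the observation that the claim for $\gamma^B$ is immediate from its definition. The only difference is that you apply Cauchy--Schwarz once to get $\int_{B^c}\rho_n\,dF<\infty$ and reuse it, whereas the paper applies it directly to $\|\gamma(h_1,x)-\gamma(h_2,x)\|$; this is an inessential reordering of the same computation.
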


\begin{proof}
Let $h \in H$ be arbitrary. There exists $n \in \mathbb{N}$ with $\| h \| \leq n$. By the Cauchy-Schwarz inequality and (\ref{linear-growth-rho}), (\ref{rho-square-int}) we have
\begin{align*}
\int_{B^c} \| \gamma(h,x) \| F(dx) &\leq F(B^c)^{1/2} \bigg( \int_E \| \gamma(h,x) \|^2 F(dx) \bigg)^{1/2}
\\ &\leq F(B^c)^{1/2} \bigg( \int_E \rho_n(x)^2 F(dx) \bigg)^{1/2} < \infty,
\end{align*}
showing (\ref{int-Bc-fin-app}). Now, let  $n \in \mathbb{N}$ and $h_1,h_2 \in H$ with $\| h_1 \|, \| h_2 \| \leq n$ be arbitrary. By the Cauchy-Schwarz inequality and (\ref{Lipschitz-gamma-rho}) we obtain
\begin{align*}
&\bigg\| \int_{B^c} \gamma(h_1,x) F(dx) - \int_{B^c} \gamma(h_2,x) F(dx)  \bigg\| \leq \int_{B^c} \| \gamma(h_1,x) - \gamma(h_2,x) \| F(dx)
\\ &\leq F(B^c)^{1/2} \bigg( \int_E \| \gamma(h_1,x) - \gamma(h_2,x) \|^2 F(dx) \bigg)^{1/2}
\\ &\leq F(B^c)^{1/2} \bigg( \int_E \rho_n(x)^2 F(dx) \bigg)^{1/2} \| h_1 - h_2 \|,
\end{align*}
which, in view of (\ref{rho-square-int}), proves that $\alpha^B$ also satisfies (\ref{Lipschitz-alpha-st}). Furthermore, the mapping $\gamma^B$ also satisfies (\ref{Lipschitz-gamma-rho}), (\ref{linear-growth-rho}), which directly follows from its Definition (\ref{def-gamma-B-app}).
\end{proof}

\begin{lemma}\label{lemma-Nt}
For every set $B \in \mathcal{E}$ with $F(B^c) < \infty$ the process
\begin{align*}
N_t := \mu([0,t] \times B^c), \quad t \geq 0
\end{align*}
is a c\`{a}dl\`{a}g, adapted process with $N_0 = 0$, $N \in \mathbb{N}_0$ and $\Delta N \in \{ 0,1 \}$ up to an evanescent set, and we have the representation
\begin{align}\label{repr-Nt}
N_t = \sum_{n \in \mathbb{N}} \mathbbm{1}_{\{ \xi_{\tau_n} \notin B \}} \mathbbm{1}_{\{ \tau_n \leq t \}}, \quad t \geq 0.
\end{align}
\end{lemma}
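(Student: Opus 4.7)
The plan is to derive the representation \eqref{repr-Nt} directly from the formula \eqref{integrate-mu} in Remark \ref{remark-mu}, and then read off the remaining structural properties either from this formula or from standard properties of Poisson random measures. Concretely, I would apply \eqref{integrate-mu} to the optional (in fact deterministic) integrand $\gamma(\omega,s,x) := \mathbbm{1}_{B^c}(x)$. The integrability condition \eqref{gamma-mu-integrable} is satisfied because
\begin{align*}
\mathbb{E}\bigg[ \int_0^t \int_E \mathbbm{1}_{B^c}(x)\,\mu(ds,dx) \bigg] = t\,F(B^c) < \infty,
\end{align*}
so \eqref{integrate-mu} yields $N_t = \sum_{n \in \mathbb{N}} \mathbbm{1}_{\{\xi_{\tau_n} \notin B\}} \mathbbm{1}_{\{\tau_n \leq t\}}$, which is precisely \eqref{repr-Nt}.

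Next I would verify the structural claims one by one. The relation $N_0 = 0$ is immediate: each summand in the representation carries the factor $\mathbbm{1}_{\{\tau_n \leq 0\}}$, and the $\tau_n$ are (strictly) positive since $\mu$ has no mass on $\{0\} \times E$. That $N_t \in \mathbb{N}_0$ $\mathbb{P}$-a.s.\ follows because $\mu([0,t] \times B^c)$ is Poisson distributed with finite parameter $t F(B^c)$, so the sum in \eqref{repr-Nt} is $\mathbb{P}$-a.s.\ finite and integer-valued. Adaptedness is clear, since by definition of a time-homogeneous Poisson random measure the process $t \mapsto \mu([0,t] \times B^c)$ is $(\mathcal{F}_t)$-adapted.

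For the càdlàg property and $\Delta N \in \{0,1\}$, I would use the disjointness of the graphs $[\![\tau_n]\!]$ from Remark \ref{remark-mu}. Since $\mathbb{P}(N_t < \infty) = 1$ for every $t \geq 0$, on the set of full measure where all $N_t$ are finite only finitely many terms $\mathbbm{1}_{\{\tau_n \leq t\}}$ are non-zero on compact time intervals, and each map $t \mapsto \mathbbm{1}_{\{\tau_n \leq t\}}$ is itself càdlàg; hence $N$ is càdlàg up to an evanescent set. From the representation we read off
\begin{align*}
\Delta N_t = \sum_{n \in \mathbb{N}} \mathbbm{1}_{\{\xi_{\tau_n} \notin B\}} \mathbbm{1}_{\{\tau_n = t\}},
\end{align*}
and because $[\![\tau_n]\!] \cap [\![\tau_m]\!] = \emptyset$ for $n \neq m$, at most one term in this sum is non-zero, giving $\Delta N \in \{0,1\}$.

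There is no substantive obstacle: the whole lemma is essentially a bookkeeping application of the optional-process representation of $\mu$ recalled in Remark \ref{remark-mu}, together with the standard fact that $\mu([0,t] \times B^c)$ is $\mathrm{Poisson}(tF(B^c))$-distributed. The only minor subtlety is making sure that the exceptional null sets encountered (a.s.\ finiteness of $N_t$ for every $t \in \mathbb{Q}_+$, combined with monotonicity) can be collected into a single evanescent set, which is a routine exhaustion argument using the countable dense set $\mathbb{Q}_+$ and the monotonicity of $t \mapsto N_t$.
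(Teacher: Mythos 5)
Your proof is correct and follows essentially the same route as the paper: the representation \eqref{repr-Nt} is obtained from \eqref{integrate-mu} with the integrand $\mathbbm{1}_{B^c}(x)$, a.s.\ finiteness of $N_t$ comes from $tF(B^c)<\infty$, and the structural properties are then read off the representation. The only cosmetic differences are that you deduce $\Delta N \in \{0,1\}$ from the disjointness of the graphs $[\![\tau_n]\!]$ rather than from $\mu(\omega;\{t\}\times E)\leq 1$, and you invoke the Poisson distribution of $\mu([0,t]\times B^c)$ where the paper simply computes $\mathbb{E}[N_t]$ via the compensator; both are equivalent and unobjectionable.
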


\begin{proof}
We have $N_0 = 0$, because $\mu(\omega; \{ 0 \} \times E) = 0$ for all $\omega \in \Omega$ by the definition of a random measure, see \cite[Definition~II.1.3]{Jacod-Shiryaev}.
By (\ref{integrate-mu}) we have
\begin{align*}
N_t &= \mu([0,t] \times B^c) = \int_0^t \int_E \mathbbm{1}_{\{ x \notin B \}} \mu(ds,dx)
\\ &= \sum_{n \in \mathbb{N}} \mathbbm{1}_{\{ \xi_{\tau_n} \notin B \}} \mathbbm{1}_{\{ \tau_n \leq t \}}, \quad t \geq 0
\end{align*}
which provides the representation (\ref{repr-Nt}) and shows that $N \in \overline{\mathbb{N}}_0$. Since
\begin{align*}
\mathbb{E}[ N_t ] = \mathbb{E}[ \mu([0,t] \times B^c) ] = t F(B^c) < \infty \quad \text{for all $t \geq 0$,}
\end{align*}
we deduce that $\mathbb{P}(N_t < \infty) = 1$ for all $t \geq 0$. Therefore, the representation (\ref{repr-Nt}) shows that the process $N$ is c\`{a}dl\`{a}g, adapted with $N \in \mathbb{N}_0$ up to an evanescent set. Since $\mu(\omega; \{ t \} \times E) \leq 1$ for all $(\omega,t) \in \Omega \times \mathbb{R}_+$ by the definition of an integer-valued random measure, see \cite[Definition~II.1.13]{Jacod-Shiryaev}, we obtain $\Delta N \in \{ 0,1 \}$.
\end{proof}

For any set $B \in \mathcal{E}$ we define the mapping $\varrho^B : \Omega \rightarrow \overline{\mathbb{R}}_+$ as
\begin{align*}
\varrho^{B} := \inf \{ t \geq 0 : \mu([0,t] \times B^c) = 1 \}.
\end{align*}
For the representation (\ref{repr-tau-B}) below we recall that for any stopping time $\tau$ and any set $A \in \mathcal{F}_{\tau}$ the mapping $\tau^A : \Omega \rightarrow \overline{\mathbb{R}}_+$ given by
\begin{align}\label{def-tau-A}
\tau^A(\omega) :=
\begin{cases}
\tau(\omega), & \omega \in A
\\ \infty, & \omega \notin A
\end{cases}
\end{align}
is also a stopping time.

\begin{lemma}\label{lemma-integrate-tau-B}
For every set $B \in \mathcal{E}$ with $F(B^c) < \infty$ the mapping $\varrho^B$ is a strictly positive stopping time and we have the representation
\begin{align}\label{repr-tau-B}
\varrho^B = \min_{n \in \mathbb{N}} \tau_n^{\{ \xi_{\tau_n} \notin B \}}.
\end{align}
\end{lemma}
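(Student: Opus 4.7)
The entire proof rests on Lemma \ref{lemma-Nt}, which tells us that $N_t := \mu([0,t]\times B^c)$ is a c\`adl\`ag adapted process, integer-valued, with $N_0 = 0$, jumps bounded by $1$, and admitting the explicit representation
\begin{align*}
N_t = \sum_{n \in \mathbb{N}} \mathbbm{1}_{\{\xi_{\tau_n} \notin B\}} \mathbbm{1}_{\{\tau_n \leq t\}}, \quad t \geq 0.
\end{align*}
Since $N$ is integer-valued and jumps are at most unit size, the event $\{N_t = 1\}$ coincides with $\{N_t \geq 1\}$ prior to any further jumps, so $\varrho^B = \inf\{t \geq 0 : N_t \in [1,\infty)\}$.

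First, I would verify that $\varrho^B$ is a stopping time. Two paths are available: either apply Lemma \ref{lemma-inf-stopping-time} directly to the c\`adl\`ag adapted real-valued process $N$ and the Borel set $[1,\infty)$, or argue through the explicit representation below. For strict positivity, I would use the fact that for each $\omega$ the trajectory $t \mapsto N_t(\omega)$ is c\`adl\`ag with $N_0(\omega) = 0$ and takes values in $\mathbb{N}_0$; right-continuity at $0$ forces $N_t(\omega) \to 0$ as $t \downarrow 0$, and since the values are integers, there exists $\epsilon(\omega) > 0$ with $N_t(\omega) = 0$ on $[0,\epsilon(\omega))$. Hence $\varrho^B(\omega) \geq \epsilon(\omega) > 0$, giving $\varrho^B > 0$ pointwise.

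For the representation (\ref{repr-tau-B}), I would exploit the additive structure of the representation of $N$. Since the graphs $[\![\tau_n]\!]$ are pairwise disjoint, $N$ jumps only at those $\tau_n$ for which $\xi_{\tau_n} \notin B$, and each such jump is of size $1$. Therefore
\begin{align*}
\{t \geq 0 : N_t \geq 1\} = \bigcup_{n \in \mathbb{N}} \big( [\tau_n,\infty) \cap \{\xi_{\tau_n} \notin B\} \big),
\end{align*}
so $\varrho^B = \inf_n \tau_n \mathbbm{1}_{\{\xi_{\tau_n} \notin B\}} + \infty \cdot \mathbbm{1}_{\{\xi_{\tau_n} \in B\}}$, which is precisely $\min_n \tau_n^{\{\xi_{\tau_n} \notin B\}}$ in the notation (\ref{def-tau-A}). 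Note that $\{\xi_{\tau_n} \notin B\} \in \mathcal{F}_{\tau_n}$, because $\xi$ is optional and $\tau_n$ is a stopping time, so that $\xi_{\tau_n}$ is $\mathcal{F}_{\tau_n}$-measurable; hence each $\tau_n^{\{\xi_{\tau_n} \notin B\}}$ is a stopping time and the countable infimum/minimum is again a stopping time, providing an alternative proof that $\varrho^B$ is a stopping time.

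The only mild subtlety — and the place requiring care — is the switch from ``$N_t = 1$ for the first time'' to ``$N_t \geq 1$'', which is legitimate precisely because $\Delta N \in \{0,1\}$ and $N_0 = 0$ from Lemma \ref{lemma-Nt}; the rest is bookkeeping on the representation together with the measurability of $\{\xi_{\tau_n} \notin B\}$ with respect to $\mathcal{F}_{\tau_n}$.
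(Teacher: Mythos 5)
Your proof is correct and follows essentially the same route as the paper: the paper's own proof is just the remark that the lemma is a direct consequence of Lemma \ref{lemma-Nt}, and your argument is the spelled-out version of exactly that deduction (first passage of the counting process $N$ to level $1$, the stopping-time property via Lemma \ref{lemma-inf-stopping-time} or via $\{\xi_{\tau_n}\notin B\}\in\mathcal{F}_{\tau_n}$, and the representation (\ref{repr-tau-B}) read off from the representation of $N$ in Lemma \ref{lemma-Nt}). The only cosmetic caveat is that the properties of $N$ in Lemma \ref{lemma-Nt} hold up to an evanescent set, so strict positivity (and attainment of the minimum in (\ref{repr-tau-B})) is obtained almost surely rather than literally pointwise, which is all the lemma requires.
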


\begin{proof}
This is a direct consequence of Lemma~\ref{lemma-Nt}.
\end{proof}

We shall now consider the SPDE
\begin{align}\label{SPDE-cutted-app}
\left\{
\begin{array}{rcl}
dr_t^{B} & = & ( A r_t^{B} + \alpha^{B}(r_t^{B}) )dt +
\sigma(r_t^{B})dW_t
\\ && + \int_{E}
\gamma^{B}(r_{t-}^{B},x) (\mu(dt,dx) - F(dx)dt)
\medskip
\\ r_0^{B} & = & h_0,
\end{array}
\right.
\end{align}
where $B \in \mathcal{E}$ is a set with $F(B^c) < \infty$, and the mappings $\alpha^{B} : H \rightarrow H$ and $\gamma^{B} : H \times E \rightarrow H$ are given by (\ref{def-alpha-B-app}), (\ref{def-gamma-B-app}).

\begin{proposition}\label{prop-r-equal-rB}
Let $h_0 : \Omega \rightarrow H$ be a $\mathcal{F}_0$-measurable random variable, let $B \in \mathcal{E}$ be a set with $F(B^c) < \infty$, and let $0 < \tau \leq \varrho^B$ be a stopping time. Then the following statements are true:
\begin{enumerate}
\item If there exists a local mild solution $r$ to (\ref{SPDE}) with lifetime $\tau$, then there also exists a local mild solution $r^B$ to (\ref{SPDE-cutted-app}) with lifetime $\tau$ such that
\begin{align}\label{r-rB-stoch-interval}
r^{\tau} \mathbbm{1}_{[\![ 0,\tau [\![ } = (r^B)^{\tau} \mathbbm{1}_{[\![ 0,\tau [\![ }.
\end{align}

\item If there exists a local mild solution $r^B$ to (\ref{SPDE-cutted-app}) with lifetime $\tau$, then there also exists a local mild solution $r$ to (\ref{SPDE}) with lifetime $\tau$ such that (\ref{r-rB-stoch-interval}) is satisfied.
\end{enumerate}
In particular, in either case we have $(r^{\tau})_- = ((r^B)^{\tau})_-$.
\end{proposition}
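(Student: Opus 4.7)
The plan is to exploit an algebraic identity relating the mild formulations of the two SPDEs. Writing $\gamma = \gamma^B + \gamma \mathbbm{1}_{B^c}$ and using the definition $\alpha(h) = \alpha^B(h) + \int_{B^c}\gamma(h,x)\,F(dx)$, a direct rearrangement gives for every c\`{a}dl\`{a}g adapted process $u$ and every $t \geq 0$
\begin{align*}
\mathcal{R}_t(u) = \mathcal{R}^B_t(u) + \int_0^t \int_{B^c} S_{t-s}\gamma(u_{s-},x)\,\mu(ds,dx),
\end{align*}
where $\mathcal{R}_t(u)$ and $\mathcal{R}^B_t(u)$ denote the right-hand sides of the mild equations for (\ref{SPDE}) and (\ref{SPDE-cutted-app}), respectively, evaluated with integrand $u$. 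Lemma \ref{lemma-gamma-Bc-cont} guarantees that the extra drift absorbed into $\alpha^B$ is well-defined and that $\alpha^B,\gamma^B$ again satisfy (\ref{Lipschitz-alpha-st})--(\ref{linear-growth-rho}). The crucial observation is that the residual Poisson integral vanishes on the stochastic interval $[\![0,\varrho^B[\![$: by the representation (\ref{integrate-mu}) and Lemma \ref{lemma-integrate-tau-B},
\begin{align*}
\int_0^t \int_{B^c} S_{t-s}\gamma(u_{s-},x)\,\mu(ds,dx) = \sum_{n \in \mathbb{N}} S_{t-\tau_n}\gamma(u_{\tau_n-},\xi_{\tau_n})\,\mathbbm{1}_{\{\xi_{\tau_n} \notin B\}}\mathbbm{1}_{\{\tau_n \leq t\}}
\end{align*}
is identically zero for $t < \varrho^B$. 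Since $\tau \leq \varrho^B$, one obtains $\mathcal{R}_t(u) = \mathcal{R}^B_t(u)$ $\mathbb{P}$--a.s.\ for every $t < \tau$ and every c\`{a}dl\`{a}g adapted $u$.

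For direction~(1), given $r$ a local mild solution to (\ref{SPDE}) with lifetime $\tau$, we set $r^B_t := \mathcal{R}^B_{t \wedge \tau}(r)$; this is c\`{a}dl\`{a}g and adapted, and by the vanishing observation $r^B_t = \mathcal{R}_{t \wedge \tau}(r) = r_{t \wedge \tau} = r_t$ for every $t < \tau$, yielding $r^\tau\mathbbm{1}_{[\![0,\tau[\![} = (r^B)^\tau\mathbbm{1}_{[\![0,\tau[\![}$. To verify that $r^B$ itself satisfies the mild equation for (\ref{SPDE-cutted-app}), one has to show $\mathcal{R}^B_{t\wedge\tau}(r) = \mathcal{R}^B_{t\wedge\tau}(r^B)$, i.e.\ that replacing $r$ by $r^B$ inside the integrands leaves the value unchanged: the Lebesgue and It\^{o} integrands $\alpha^B(r_s)$, $\sigma^j(r_s)$ coincide with $\alpha^B(r^B_s)$, $\sigma^j(r^B_s)$ Lebesgue-almost everywhere on $[0,t\wedge\tau]$ (they can differ at most at the single point $s = \tau$), while the Poisson integrand $\gamma^B(\cdot_{s-},x)$ depends only on left limits, and $r_{s-} = r^B_{s-}$ for every $s \in [0,\tau]$ because $r = r^B$ on $[\![0,\tau[\![$ and left limits depend only on strictly earlier values. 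The integrability conditions of Definition \ref{def-solutions} transfer from $r$ to $r^B$ via the Lipschitz bounds (\ref{Lipschitz-alpha-st})--(\ref{linear-growth-rho}) and the boundedness of $r^B$ on bounded intervals. Direction (2) is entirely symmetric: given $r^B$, set $r_t := \mathcal{R}_{t \wedge \tau}(r^B)$ and repeat the argument with the roles of $r$ and $r^B$ interchanged.

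The concluding identity $(r^\tau)_- = ((r^B)^\tau)_-$ is then immediate, since $r = r^B$ on $[\![0,\tau[\![$ forces $r_{s-} = r^B_{s-}$ for every $s \in [0,\tau]$. The hardest step will be the integrand-replacement argument of the preceding paragraph: although conceptually clear, one must check carefully that the potential discrepancy $r_\tau \neq r^B_\tau$ on the event $\{\tau = \varrho^B\}$, where $r$ registers the first $B^c$-jump but $r^B$ does not, does not affect the Lebesgue, It\^{o}, or Poisson integrals on $[0,t\wedge\tau]$; this works because the Poisson integrand is evaluated at the left limit, which is precisely the reason why the proposition asserts identity of $r$ and $r^B$ on $[\![0,\tau[\![$ rather than on $[\![0,\tau]\!]$.
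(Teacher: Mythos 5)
Your proposal is correct and follows essentially the same route as the paper: both arguments rest on the decomposition $\alpha = \alpha^B + \int_{B^c}\gamma(\cdot,x)\,F(dx)$, $\gamma = \gamma^B + \gamma\mathbbm{1}_{B^c}$ together with the observation (via (\ref{integrate-mu}) and Lemma \ref{lemma-integrate-tau-B}) that the $\mu$-integral over $B^c$ has no atoms strictly before $\varrho^B$, so that on $[\![ 0,\tau [\![$ the two mild equations coincide and any discrepancy is confined to the single possible jump at $\tau = \varrho^B$. The only (immaterial) difference is packaging: the paper defines $r^B := r - \gamma(r_{\varrho^B-},\xi_{\varrho^B})\mathbbm{1}_{[\![ \varrho^B,\infty [\![}$ explicitly, which makes the c\`{a}dl\`{a}g property of $r^B$ immediate and reduces the verification to the computation (\ref{repr-mu-Bc}), whereas you define $r^B$ through the stopped solution map and then argue by integrand replacement, which is the same cancellation in a slightly different order.
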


\begin{proof}
Let $r$ be a local mild solution to (\ref{SPDE}) with lifetime $\tau$. We define the process $r^B$ by
\begin{align*}
r^B := r - \gamma(r_{\varrho^B-},\xi_{\varrho^B}) \mathbbm{1}_{[\![ \varrho^B,\infty [\![ }.
\end{align*}
Then $r^B$ is c\`{a}dl\`{a}g and adapted, because $\gamma(r_{\varrho^B-}^B,\xi_{\varrho^B})$ is $\mathcal{F}_{\varrho^B}$-measurable, and, since $\tau \leq \varrho^B$, we have
\begin{align*}
(r^B)^{\tau} = r^{\tau} - \gamma(r_{\varrho^B-},\xi_{\varrho^B}) \mathbbm{1}_{\{ \tau = \varrho^B \}} \mathbbm{1}_{[\![ \tau,\infty [\![ } = r^{\tau} - \gamma(r_{\tau-},\xi_{\tau}) \mathbbm{1}_{\{ \tau = \varrho^B \}} \mathbbm{1}_{[\![ \tau,\infty [\![ }.
\end{align*}
Therefore, we have (\ref{r-rB-stoch-interval}), and hence $(r^{\tau})_- = ((r^B)^{\tau})_-$.
Since $r$ is a local mild solution to (\ref{SPDE}) with lifetime $\tau$, we have $\mathbb{P}$-almost surely
\begin{align*}
r_{t \wedge \tau}^B &= S_{t \wedge \tau} h_0 + \int_0^{t \wedge \tau} S_{(t \wedge \tau) - s} \alpha(r_s) ds + \int_0^{t \wedge \tau} S_{(t \wedge \tau) - s} \sigma(r_s) dW_s
\\ &\quad + \int_0^{t \wedge \tau} \int_{E} S_{(t \wedge \tau) - s} \gamma(r_{s-},x) (\mu(ds,dx) - F(dx)ds)
\\ &\quad - \gamma(r_{\tau-},\xi_{\tau}) \mathbbm{1}_{\{ \tau = \varrho^B \}} \mathbbm{1}_{\{ \tau \leq t \}}
\\ &= S_{t \wedge \tau} h_0 + \int_0^{t \wedge \tau} S_{(t \wedge \tau) - s} \alpha(r_s^B) ds + \int_0^{t \wedge \tau} S_{(t \wedge \tau) - s} \sigma(r_s^B) dW_s
\\ &\quad + \int_0^{t \wedge \tau} \int_{E} S_{(t \wedge \tau) - s} \gamma(r_{s-}^B,x) (\mu(ds,dx) - F(dx)ds)
\\ &\quad - \gamma(r_{\tau-}^B,\xi_{\tau}) \mathbbm{1}_{\{ \tau = \varrho^B \}} \mathbbm{1}_{\{ \tau \leq t \}}, \quad t \geq 0.
\end{align*}
Hence, by the Definitions (\ref{def-alpha-B-app}), (\ref{def-gamma-B-app}) of $\alpha^B, \gamma^B$ we get $\mathbb{P}$-almost surely
\begin{align*}
r_{t \wedge \tau}^B &= S_{t \wedge \tau} h_0 + \int_0^{t \wedge \tau} S_{(t \wedge \tau) - s} \alpha^B(r_s^B) ds + \int_0^{t \wedge \tau} S_{(t \wedge \tau) - s} \sigma(r_s^B) dW_s
\\ &\quad + \int_0^{t \wedge \tau} \int_{E} S_{(t \wedge \tau) - s} \gamma^B(r_{s-}^B,x) (\mu(ds,dx) - F(dx)ds)
\\ &\quad - \gamma(r_{\tau-}^B,\xi_{\tau}) \mathbbm{1}_{\{ \tau = \varrho^B \}} \mathbbm{1}_{\{ \tau \leq t \}} + \int_0^{t \wedge \tau} \int_{B^c} S_{(t \wedge \tau) - s} \gamma(r_{s-}^B,x) F(dx)ds
\\ &\quad + \int_0^{t \wedge \tau} \int_{B^c} S_{(t \wedge \tau) - s} \gamma(r_{s-}^B,x) (\mu(ds,dx) - F(dx)ds), \quad t \geq 0.
\end{align*}
By (\ref{integrate-mu}) and the representation (\ref{repr-tau-B}) from Lemma~\ref{lemma-integrate-tau-B} we have $\mathbb{P}$-almost surely
\begin{equation}\label{repr-mu-Bc}
\begin{aligned}
&\int_0^{t \wedge \tau} \int_{B^c} S_{(t \wedge \tau) - s} \gamma(r_{s-}^B,x) \mu(ds,dx)
\\ &= \sum_{n \in \mathbb{N}} S_{(t \wedge \tau) - \tau_n} \gamma(r_{\tau_n -}^B, \xi_{\tau_n}) \mathbbm{1}_{\{ \xi_{\tau_n} \notin B \}} \mathbbm{1}_{\{ \tau_n \leq t \wedge \tau \}}
\\ &= S_{(t \wedge \tau) - \varrho^B} \gamma(r_{\varrho^B -}^B, \xi_{\varrho^B}) \mathbbm{1}_{\{ \varrho^B \leq t \wedge \tau \}} = S_{(t \wedge \tau) - \varrho^B} \gamma(r_{\varrho^B -}^B, \xi_{\varrho^B}) \mathbbm{1}_{\{ \tau = \varrho^B \}} \mathbbm{1}_{\{ \tau \leq t \}}
\\ &= \gamma(r_{\tau-}^B,\xi_{\tau}) \mathbbm{1}_{\{ \tau = \varrho^B \}} \mathbbm{1}_{\{ \tau \leq t \}}, \quad t \geq 0.
\end{aligned}
\end{equation}
Therefore, we obtain $\mathbb{P}$-almost surely
\begin{align*}
r_{t \wedge \tau}^B &= S_{t \wedge \tau} h_0 + \int_0^{t \wedge \tau} S_{(t \wedge \tau) - s} \alpha^B(r_s^B) ds + \int_0^{t \wedge \tau} S_{(t \wedge \tau) - s} \sigma(r_s^B) dW_s
\\ &\quad + \int_0^{t \wedge \tau} \int_{E} S_{(t \wedge \tau) - s} \gamma^B(r_{s-}^B,x) (\mu(ds,dx) - F(dx)ds), \quad t \geq 0
\end{align*}
showing that $r^B$ is a local mild solution to (\ref{SPDE-cutted-app}) with lifetime $\tau$. This proves the first statement.

Now, let $r^B$ be a local mild solution to (\ref{SPDE-cutted-app}) with lifetime $\tau$. We define the process $r$ by
\begin{align*}
r := r^B + \gamma(r_{\varrho^B-}^B,\xi_{\varrho^B}) \mathbbm{1}_{[\![ \varrho^B,\infty [\![ }.
\end{align*}
Then $r$ is c\`{a}dl\`{a}g and adapted, because $\gamma(r_{\varrho^B-}^B,\xi_{\varrho^B})$ is $\mathcal{F}_{\varrho^B}$-measurable, and, since $\tau \leq \varrho^B$, we have
\begin{align*}
r^{\tau} = (r^B)^{\tau} + \gamma(r_{\varrho^B-}^B,\xi_{\varrho^B}) \mathbbm{1}_{\{ \tau = \varrho^B \}} \mathbbm{1}_{[\![ \tau,\infty [\![ } = (r^B)^{\tau} + \gamma(r_{\tau-}^B,\xi_{\tau}) \mathbbm{1}_{\{ \tau = \varrho^B \}} \mathbbm{1}_{[\![ \tau,\infty [\![ }.
\end{align*}
Therefore, we have (\ref{r-rB-stoch-interval}), and hence $(r^{\tau})_- = ((r^B)^{\tau})_-$.
Since $r^B$ is a local mild solution to (\ref{SPDE-cutted-app}) with lifetime $\tau$, we have $\mathbb{P}$-almost surely
\begin{align*}
r_{t \wedge \tau} &= S_{t \wedge \tau} h_0 + \int_0^{t \wedge \tau} S_{(t \wedge \tau) - s} \alpha^B(r_s^B) ds + \int_0^{t \wedge \tau} S_{(t \wedge \tau) - s} \sigma(r_s^B) dW_s
\\ &\quad + \int_0^{t \wedge \tau} \int_{E} S_{(t \wedge \tau) - s} \gamma^B(r_{s-}^B,x) (\mu(ds,dx) - F(dx)ds)
\\ &\quad + \gamma(r_{\tau-}^B,\xi_{\tau}) \mathbbm{1}_{\{ \tau = \varrho^B \}} \mathbbm{1}_{\{ \tau \leq t \}}
\\ &= S_{t \wedge \tau} h_0 + \int_0^{t \wedge \tau} S_{(t \wedge \tau) - s} \alpha^B(r_s) ds + \int_0^{t \wedge \tau} S_{(t \wedge \tau) - s} \sigma(r_s) dW_s
\\ &\quad + \int_0^{t \wedge \tau} \int_{E} S_{(t \wedge \tau) - s} \gamma^B(r_{s-},x) (\mu(ds,dx) - F(dx)ds)
\\ &\quad + \gamma(r_{\tau-},\xi_{\tau}) \mathbbm{1}_{\{ \tau = \varrho^B \}} \mathbbm{1}_{\{ \tau \leq t \}}, \quad t \geq 0.
\end{align*}
Hence, by the Definitions (\ref{def-alpha-B-app}), (\ref{def-gamma-B-app}) of $\alpha^B, \gamma^B$ we get $\mathbb{P}$-almost surely
\begin{align*}
r_{t \wedge \tau} &= S_{t \wedge \tau} h_0 + \int_0^{t \wedge \tau} S_{(t \wedge \tau) - s} \alpha(r_s) ds + \int_0^{t \wedge \tau} S_{(t \wedge \tau) - s} \sigma(r_s) dW_s
\\ &\quad + \int_0^{t \wedge \tau} \int_{B} S_{(t \wedge \tau) - s} \gamma(r_{s-},x) (\mu(ds,dx) - F(dx)ds)
\\ &\quad + \gamma(r_{\tau-},\xi_{\tau}) \mathbbm{1}_{\{ \tau = \varrho^B \}} \mathbbm{1}_{\{ \tau \leq t \}} - \int_0^{t \wedge \tau} \int_{B^c} S_{(t \wedge \tau) - s} \gamma(r_{s-},x) F(dx)ds, \quad t \geq 0.
\end{align*}
Arguing as in (\ref{repr-mu-Bc}), we have $\mathbb{P}$-almost surely
\begin{align*}
&\int_0^{t \wedge \tau} \int_{B^c} S_{(t \wedge \tau) - s} \gamma(r_{s-},x) \mu(ds,dx) = \gamma(r_{\tau-},\xi_{\tau}) \mathbbm{1}_{\{ \tau = \varrho^B \}} \mathbbm{1}_{\{ \tau \leq t \}}, \quad t \geq 0.
\end{align*}
Therefore, we obtain $\mathbb{P}$-almost surely
\begin{align*}
r_{t \wedge \tau} &= S_{t \wedge \tau} h_0 + \int_0^{t \wedge \tau} S_{(t \wedge \tau) - s} \alpha(r_s) ds + \int_0^{t \wedge \tau} S_{(t \wedge \tau) - s} \sigma(r_s) dW_s
\\ &\quad + \int_0^{t \wedge \tau} \int_{E} S_{(t \wedge \tau) - s} \gamma(r_{s-},x) (\mu(ds,dx) - F(dx)ds), \quad t \geq 0
\end{align*}
showing that $r$ is a local mild solution to (\ref{SPDE}) with lifetime $\tau$. This proves the second statement.
\end{proof}

Now, let $G$ be another separable Hilbert space. For any $k \in \mathbb{N}$ we denote by $C_b^k(G;H)$ the linear space consisting of all $f \in C^k(G;H)$ such that $D^i f$ is bounded for all $i=1,\ldots,k$. In particular, for each $f \in C_b^k(G;H)$ the mappings $D^i f$, $i=0,\ldots,k-1$ are Lipschitz continuous. We do not demand that $f$ itself is bounded, as this would exclude continuous linear operators $f \in L(G;H)$.

\begin{definition}\label{def-pull-back}
Let $\alpha : H \rightarrow H$, $\sigma^j : H \rightarrow H$, $j \in \mathbb{N}$ and $\gamma : H \times E \rightarrow H$ be mappings satisfying
\begin{align}\label{sum-pull}
&\sum_{j \in \mathbb{N}} \| \sigma^j(h) \|^2 < \infty, \quad h \in H,
\\ \label{int-pull} &\int_E \| \gamma(h,x) \|^2 F(dx) < \infty, \quad h \in H,
\end{align}
and let $f : G \rightarrow H$ and $g \in C_b^2(H;G)$ be mappings.
We define the mappings $(f,g)_{\lambda}^{\star} \alpha : G \rightarrow G$, $(f,g)_W^{\star}\sigma^j : G \rightarrow G$, $j \in \mathbb{N}$ and $(f,g)_{\mu}^{\star}\gamma : G \times E \rightarrow G$ as
\begin{align}\label{star-1}
((f,g)_{\lambda}^{\star} \alpha)(z) &:= Dg(h) \alpha(h) + \frac{1}{2} \sum_{j \in \mathbb{N}} D^2 g(h)(\sigma^j(h),\sigma^j(h))
\\ \notag &\quad + \int_E \big( g(h + \gamma(h,x)) - g(h)
- Dg(h) \gamma(h,x) \big) F(dx),
\\ \label{star-2} ((f,g)_{W}^{\star} \sigma^j)(z) &:= Dg(h) \sigma^j(h),
\\ \label{star-3} ((f,g)_{\mu}^{\star} \gamma)(z,x) &:= g(h + \gamma(h,x)) - g(h),
\end{align}
where $h = f(z)$.
\end{definition}

\begin{remark}
Note that the mapping $(f,g)_{\lambda}^{\star} \alpha$ is well-defined. Indeed, for any $h \in H$, by (\ref{sum-pull}) we have
\begin{align*}
\sum_{j \in \mathbb{N}} \| D^2 g(h)(\sigma^j(h),\sigma^j(h)) \| \leq \| D^2 g(h) \| \sum_{j \in \mathbb{N}} \| \sigma^j(h) \|^2 < \infty,
\end{align*}
and by (\ref{int-pull}) and Taylor's theorem we have
\begin{align*}
&\int_E \| g(h + \gamma(h,x)) - g(h)
- Dg(h) \gamma(h,x) \| F(dx)
\\ &\leq \frac{1}{2} \| D^2 g \|_{\infty} \int_E \| \gamma(h,x) \|^2 F(dx) < \infty.
\end{align*}
\end{remark}

\begin{lemma}\label{lemma-Lipschitz-pb}
Let $\alpha : H \rightarrow H$, $\sigma^j : H \rightarrow H$, $j \in \mathbb{N}$ and $\gamma : H \times E \rightarrow H$ be mappings satisfying the regularity conditions (\ref{Lipschitz-alpha-st})--(\ref{linear-growth-sigma}) and (\ref{Lipschitz-gamma-rho})--(\ref{sigma-C1}) such that the mappings $\rho_n : E \rightarrow \mathbb{R}_+$, $n \in \mathbb{N}$ appearing in (\ref{Lipschitz-gamma-rho}), (\ref{linear-growth-rho}) even satisfy
\begin{align}\label{rho-integrable}
\int_E \big( \rho_n(x)^2 \vee \rho_n(x)^4 \big) F(dx) < \infty.
\end{align}
Furthermore, let $f \in C_b^1(G;H)$ and $g \in C_b^3(H;G)$ be arbitrary.
Then the following statements are true:
\begin{enumerate}
\item The mappings $(f,g)_{\lambda}^{\star} \alpha$, $((f,g)_W^{\star} \sigma^j)_{j \in \mathbb{N}}$ and $(f,g)_{\mu}^{\star} \gamma$ also fulfill the regularity conditions (\ref{Lipschitz-alpha-st})--(\ref{linear-growth-sigma}) and (\ref{Lipschitz-gamma-rho})--(\ref{sigma-C1}) with the mappings $\rho_n : E \rightarrow \mathbb{R}_+$, $n \in \mathbb{N}$ appearing in (\ref{Lipschitz-gamma-rho}), (\ref{linear-growth-rho}) satisfying (\ref{rho-square-int}).

\item If $g \in L(H;G)$, then the mappings $\rho_n : E \rightarrow \mathbb{R}_+$, $n \in \mathbb{N}$ appearing in (\ref{Lipschitz-gamma-rho}), (\ref{linear-growth-rho}) even satisfy (\ref{rho-integrable}).
\end{enumerate}
\end{lemma}

\begin{proof}
We define the mappings $\hat{a} : H \rightarrow G$, $\hat{b}^j : H \rightarrow G$, $j \in \mathbb{N}$ and $\hat{c} : H \times E \rightarrow G$ as
\begin{align*}
\hat{a}(h) &:= \hat{a}_1(h) + \hat{a}_2(h) + \hat{a}_3(h),
\\ \hat{b}^j(h) &:= Dg(h) \sigma^j(h),
\\ \hat{c}(h,x) &:= g(h + \gamma(h,x)) - g(h),
\end{align*}
where $\hat{a}_1,\hat{a}_2,\hat{a}_3 : H \rightarrow G$ are given by
\begin{align*}
\hat{a}_1(h) &:= Dg(h) \alpha(h),
\\ \hat{a}_2(h) &:= \frac{1}{2} \sum_{j \in \mathbb{N}} D^2 g(h)(\sigma^j(h),\sigma^j(h)),
\\ \hat{a}_3(h) &:= \int_E \big( g(h + \gamma(h,x)) - g(h)
- Dg(h) \gamma(h,x) \big) F(dx).
\end{align*}
Then we have $\hat{b}^j \in C^1(H;G)$ for all $j \in \mathbb{N}$. By Taylor's theorem, we have the representations
\begin{align}\label{repr-hat-a}
\hat{a}_3(h) &= \int_E \int_0^1 (1-t) D^2 g(h + \gamma(h,x))(\gamma(h,x),\gamma(h,x)) dt F(dx), \quad h \in H
\\ \label{repr-hat-c} \hat{c}(h,x) &= \int_0^1 Dg(h + t \gamma(h,x)) \gamma(h,x) dt, \quad (h,x) \in H \times E.
\end{align}
Let $n \in \mathbb{N}$ be arbitrary. Furthermore, let $h \in H$ with $\| h \| \leq n$ be arbitrary.
By (\ref{linear-growth-sigma}), for all $j \in \mathbb{N}$ we have
\begin{align*}
\| \hat{b}^j(h) \| \leq \| Dg \|_{\infty} \| \sigma^j(h) \| \leq \| Dg \|_{\infty} \kappa_n^j,
\end{align*}
and by (\ref{linear-growth-rho}) and the representation (\ref{repr-hat-c}), for all $x \in E$ we have
\begin{align*}
\| \hat{c}(h,x) \| \leq \int_0^1 \| Dg \|_{\infty} \| \gamma(h,x) \| dt \leq \| Dg \|_{\infty} \rho_n(x).
\end{align*}
Now, let $h_1,h_2 \in H$ with $\| h_1 \|,\| h_2 \| \leq n$ be arbitrary. Using estimate (\ref{Lipschitz-alpha-st}), we obtain
\begin{align*}
&\| \hat{a}_1(h_1) - \hat{a}_1(h_2) \| = \| D g(h_1) \alpha(h_1) - D g(h_2) \alpha(h_2) \|
\\ &\leq \| D g(h_1) \alpha(h_1) - D g(h_2) \alpha(h_1) \| + \| D g(h_2) \alpha(h_1) - D g(h_2) \alpha(h_2) \|
\\ &\leq \big( \| D^2 g \|_{\infty} (L_n n + \| \alpha(0) \|) + \| D g \|_{\infty} L_n \big) \| h_1 - h_2 \|.
\end{align*}
Moreover, we have
\begin{align*}
&\| \hat{a}_2(h_1) - \hat{a}_2(h_2) \| \leq \frac{1}{2} \sum_{j \in \mathbb{N}} \| D^2 g(h_1) (\sigma^j(h_1),\sigma^j(h_1)) - D^2 g(h_2) (\sigma^j(h_2),\sigma^j(h_2)) \|
\\ &\leq \frac{1}{2} \| D^2 g(h_1) \| \sum_{j \in \mathbb{N}} \| \sigma^j(h_1) \| \, \| \sigma^j(h_1) - \sigma^j(h_2) \|
\\ &\quad + \frac{1}{2} \| D^2 g(h_1) - D^2 g(h_2) \| \sum_{j \in \mathbb{N}} \| \sigma^j(h_1) \| \, \| \sigma^j(h_2) \|
\\ &\quad + \frac{1}{2} \| D^2 g(h_2) \| \sum_{j \in \mathbb{N}} \| \sigma^j(h_1) - \sigma^j(h_2) \| \, \| \sigma^j(h_2) \|.
\end{align*}
By estimates (\ref{Lipschitz-sigma-st}), (\ref{linear-growth-sigma}) we obtain
\begin{align*}
\| \hat{a}_2(h_1) - \hat{a}_2(h_2) \| \leq \bigg( \| D^2 g \|_{\infty} + \frac{1}{2} \| D^3 g \|_{\infty} \bigg) \bigg( \sum_{j \in \mathbb{N}} (\kappa_n^j)^2 \bigg) \| h_1 - h_2 \|.
\end{align*}
Furthermore, we have
\begin{align*}
&\| \hat{a}_3(h_1) - \hat{a}_3(h_2) \|
\leq \int_E \int_0^1 \| D^2 g(h_1 + t \gamma(h_1,x)) (\gamma(h_1,x),\gamma(h_1,x))
\\ &\quad - D^2 g(h_2 + t \gamma(h_2,x)) (\gamma(h_2,x),\gamma(h_2,x)) \| dt F(dx)
\\ &\leq \int_E \int_0^1 \| D^2 g(h_1 + t \gamma(h_1,x)) \| \, \| \gamma(h_1,x) \| \, \| \gamma(h_1,x) - \gamma(h_2,x) \| dt F(dx)
\\ &\quad + \int_E \int_0^1 \| D^2 g(h_1 + t \gamma(h_1,x)) - D^2 g(h_2 + t \gamma(h_2,x)) \|
\\ &\quad\quad\quad\quad\quad\quad \times \| \gamma(h_1,x) \| \, \| \gamma(h_2,x) \| dt F(dx)
\\ &\quad + \int_E \int_0^1 \| D^2 g(h_2 + t \gamma(h_2,x)) \| \, \| \gamma(h_1,x) - \gamma(h_2,x) \| \, \| \gamma(h_2,x) \| dt F(dx).
\end{align*}
Noting that, by (\ref{Lipschitz-gamma-rho}), for all $(x,t) \in E \times [0,1]$ we have
\begin{equation}\label{arg-x-t}
\begin{aligned}
&\| D^2 g(h_1 + t \gamma(h_1,x)) - D^2 g(h_2 + t \gamma(h_2,x)) \|
\\ &\leq \| D^3 g \|_{\infty} \| h_1 + t \gamma(h_1,x) - h_2 - t \gamma(h_2,x) \|
\\ &\leq \| D^3 g \|_{\infty} ( \| h_1 - h_2 \| + \| \gamma(h_1,x) - \gamma(h_2,x) \| )
\leq \| D^3 g \|_{\infty} (1 + \rho_n(x)) \| h_1 - h_2 \|,
\end{aligned}
\end{equation}
using estimates (\ref{Lipschitz-gamma-rho}), (\ref{linear-growth-rho}) we get
\begin{align*}
&\| \hat{a}_3(h_1) - \hat{a}_3(h_2) \|
\\ &\leq \bigg( 2 \| D^2 g \|_{\infty} \int_E \rho_n(x)^2 F(dx) + \| D^3 g \|_{\infty} \int_E \big( \rho_n(x)^2 + \rho_n(x)^3 \big) F(dx) \bigg) \| h_1 - h_2 \|.
\end{align*}
By estimates (\ref{Lipschitz-sigma-st}), (\ref{linear-growth-sigma}), for all $j \in \mathbb{N}$ we obtain
\begin{align*}
&\| \hat{b}^j(h_1) - \hat{b}^j(h_2) \| = \| D g(h_1) \sigma^j(h_1) - Dg(h_2) \sigma^j(h_2) \|
\\ &\leq \| D g(h_1) - D g(h_2) \| \, \| \sigma^j(h_1) \| + \| D g(h_2) \| \, \| \sigma^j(h_1) - \sigma^j(h_2) \|
\\ &\leq \big( \| D^2 g \|_{\infty} + \| D g \|_{\infty} \big) \kappa_n^j \| h_1 - h_2 \|.
\end{align*}
For all $x \in E$ we obtain
\begin{align*}
&\| \hat{c}(h_1,x) - \hat{c}(h_2,x) \|
\\ &\leq \int_0^1 \| D g(h_1 + t \gamma(h_1,x)) \gamma(h_1,x) - D g(h_2 + t \gamma(h_2,x)) \gamma(h_2,x) \| dt
\\ &\leq \int_0^1 \| Dg(h_1 + t \gamma(h_1,x)) - Dg(h_2 + t \gamma(h_2,x)) \| \, \| \gamma(h_1,x) \| dt
\\ &\quad + \int_0^1 \| Dg(h_2 + t \gamma(h_2,x)) \| \, \| \gamma(h_1,x) - \gamma(h_2,x) \| dt.
\end{align*}
Arguing as in (\ref{arg-x-t}), for all $(x,t) \in E \times [0,1]$ we have
\begin{align*}
&\| D g(h_1 + t \gamma(h_1,x)) - D g(h_2 + t \gamma(h_2,x)) \|
\leq \| D^2 g \|_{\infty} (1 + \rho_n(x)) \| h_1 - h_2 \|.
\end{align*}
Using estimates (\ref{Lipschitz-gamma-rho}), (\ref{linear-growth-rho}), we obtain
\begin{equation}\label{est-D-2}
\begin{aligned}
\| \hat{c}(h_1,x) - \hat{c}(h_2,x) \| \leq \big( \| D^2 g \|_{\infty} ( \rho_n(x) + \rho_n(x)^2 ) + \| Dg \|_{\infty} \rho_n(x) \big) \| h_1 - h_2 \|.
\end{aligned}
\end{equation}
Since $(f,g)_{\lambda}^{\star} \alpha = \hat{a} \circ f$, $(f,g)_{W}^{\star} \sigma^j = \hat{b}^j \circ f$, $j \in \mathbb{N}$ and $((f,g)_{\mu}^{\star} \gamma)(\bullet,x) = \hat{c}(\bullet,x) \circ f$, $x \in E$ as well as $f \in C_b^1(G;H)$, we deduce that conditions (\ref{Lipschitz-alpha-st})--(\ref{linear-growth-sigma}) and (\ref{Lipschitz-gamma-rho})--(\ref{sigma-C1}) are satisfied with the mappings $\rho_n : E \rightarrow \mathbb{R}_+$, $n \in \mathbb{N}$ appearing in (\ref{Lipschitz-gamma-rho}), (\ref{linear-growth-rho}) satisfying (\ref{rho-square-int}), which proves the first statement. 

If $g \in L(H;G)$, then we have $D^2 g \equiv 0$, and hence, estimate (\ref{est-D-2}) shows that the mappings $\rho_n : E \rightarrow \mathbb{R}_+$, $n \in \mathbb{N}$ appearing in (\ref{Lipschitz-gamma-rho}), (\ref{linear-growth-rho}) even satisfy (\ref{rho-integrable}), establishing the second statement.
\end{proof}

The following result is a version of It\^o's formula for jump-diffusions in infinite dimension.

\begin{proposition}\label{prop-ito}
Let $\alpha : \Omega \times \mathbb{R}_+ \rightarrow G$, $\sigma : \Omega \times \mathbb{R}_+
\rightarrow L_2^0(G)$ and $\gamma : \Omega \times
\mathbb{R}_+ \times E \rightarrow G$ be predictable
processes such that for all $t \geq 0$ we have
\begin{align*}
\mathbb{P} \bigg( \int_0^t \bigg( \| \alpha_s \| + \|
\sigma_s \|_{L_2^0(G)}^2 + \int_E \|
\gamma(s,x) \|^2 F(dx) \bigg) ds < \infty \bigg) = 1.
\end{align*}
Furthermore, let $Y_0 : \Omega \rightarrow G$ be a $\mathcal{F}_0$-measurable random variable, let $Y$ be the $G$-valued It\^o process
\begin{align*}
Y_t = Y_0 + \int_0^t \alpha_s ds + \sum_{j \in \mathbb{N}} \int_0^t
\sigma_s^j d\beta_s^j + \int_0^t \int_E \gamma(s,x)
(\mu(ds,dx) - F(dx)ds), \quad t \geq 0
\end{align*}
and let $\phi \in C_b^2(G;H)$ be arbitrary. Then we have $\mathbb{P}$-almost surely
\begin{align*}
&\phi(Y_t) = \phi(Y_0) + \int_0^t \bigg( D \phi(Y_s) \alpha_s + \frac{1}{2} \sum_{j \in \mathbb{N}} D^2
\phi(Y_s)(\sigma_s^j,\sigma_s^j)
\\ &\quad + \int_E
\big( \phi(Y_s + \gamma(s,x)) - \phi(Y_s) - D \phi(Y_s)
\gamma(s,x) \big) F(dx) \bigg) ds
\\ &\quad + \sum_{j \in \mathbb{N}} \int_0^t
D \phi(Y_s) \sigma_s^j d\beta_s^j
\\ &\quad + \int_0^t \int_E \big( \phi(Y_{s-} + \gamma(s,x)) - \phi(Y_{s-}) \big) (\mu(ds,dx) -
F(dx)ds), \quad t \geq 0
\end{align*}
where $\sigma^j := \sqrt{\lambda_j} \sigma e_j$ for each $j \in \mathbb{N}$.
\end{proposition}

\begin{proof}
For the following particular cases, this version of It\^{o}'s formula is known:
\begin{itemize}
\item For $\gamma \equiv 0$ it follows by applying \cite[Theorem~2.9]{Atma-book} to the function $\langle h, \phi(Y) \rangle$ for each $h \in H$.

\item For $\sigma \equiv 0$ it follows from \cite[Theorem~3.6]{MRT}.
\end{itemize}
The general result follows by executing the proofs of the above-mentioned results simultaneously.
\end{proof}

\section{Finite dimensional submanifolds with boundary in Hilbert spaces}\label{app-submanifolds}

In this section, we provide results about
finite dimensional submanifolds with boundary in Hilbert spaces. For more details, we refer to any textbook about manifolds, e.g., \cite{Abraham}, \cite{Lang} or \cite{Warner}.

Let $H$ be a Hilbert space and let $m \in
\mathbb{N}$ be a positive integer. We denote by $ \mathbb{R}^m_+ $
the set of $m$-tuples $ y \in \mathbb{R}^m $ with non-negative first
coordinate $ y_1 \geq 0 $, that is
\begin{align*}
\mathbb{R}_+^m = \mathbb{R}_+ \times \mathbb{R}^{m-1} = \{ y \in
\mathbb{R}^m : y_1 \geq 0 \}.
\end{align*}
We consider the relative topology on $ \mathbb{R}^m_+ $. Let $V$ be an open subset in $\mathbb{R}_+^m$, i.e., there exists an open set $ \tilde{V} \subset
\mathbb{R}^m $ such that $ \tilde{V} \cap \mathbb{R}_+^m = V $. A boundary point of $V$ is by definition
any point $ y \in V $ with vanishing first coordinate $ y_1 = 0 $.
The set of all boundary points of $ V $ is denoted by $ \partial V
$, i.e.
\begin{align*}
\partial V = \{ y \in V : y_1 = 0 \}.
\end{align*}
Let $k \in \mathbb{N}$ be arbitrary.

\begin{definition}\label{def-Ck-map}
A map $ \phi: V \subset \mathbb{R}^m_+ \to H $ is
called a \emph{$ C^k $-map}, if there is an open set $ \tilde{V} \subset
\mathbb{R}^m $ together with a $ C^k $-map $ \tilde{\phi}: \tilde{V}
\to H $ such that $ \tilde{V} \cap \mathbb{R}_+^m = V $ and $
\tilde{\phi}|_{V} = \phi $.
\end{definition}

For a $C^k$-map $ \phi: V \subset \mathbb{R}^m_+ \to H $ and $y \in V$ we define the derivative $D \phi(y) := D \tilde{\phi}(y)$.
Note that this definition does not depend on the choice of $\tilde{\phi}$.

\begin{definition}\label{def-diffeo-general}
A map $ \phi: V \subset \mathbb{R}^m_+ \to W \subset \mathbb{R}^m_+ $ is
called a \emph{$ C^k $-diffeomorphism}, if $\phi$ is bijective and both, $\phi$ and $\phi^{-1}$, are $C^k$-maps.
\end{definition}

The following lemma is a standard result, whence we omit the proof.

\begin{lemma}\label{lemma-diffeo-boundary}
Let $ \phi: V \subset \mathbb{R}^m_+ \to W \subset \mathbb{R}^m_+ $ be a $C^k$-diffeomorphism for some $k \in \mathbb{N}$. Then the following statements are true:
\begin{enumerate}
\item We have $\phi(\partial V) = \partial W$.

\item For each $y \in \partial V$ we have $D\phi(y) \mathbb{R}_+^m \subset \mathbb{R}_+^m$.
\end{enumerate}
\end{lemma}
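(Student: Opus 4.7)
My plan is to prove the two statements in the natural order. For part~(1) I will first establish the invertibility of $D\phi(y)$ at every $y\in V$ by differentiating the identity $\phi^{-1}\circ\phi=\mathrm{id}_V$ on a neighborhood in $\tilde V$, using the $C^k$-extensions $\tilde\phi:\tilde V\to\mathbb{R}^m$ and $\tilde\psi:\tilde W\to\mathbb{R}^m$ of $\phi$ and $\phi^{-1}$ granted by Definition~\ref{def-Ck-map}. Since $\phi(y)\in W\subset\tilde W$, the chain rule applied to the extensions gives $D\tilde\psi(\phi(y))\,D\tilde\phi(y)=I$, so $D\phi(y)$ is invertible.

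For part~(1) proper, take $y\in V\setminus\partial V$, i.e. $y_1>0$. Then $y$ lies in some open Euclidean ball $B\subset\tilde V\cap(\mathbb{R}\setminus\{0\})\times\mathbb{R}^{m-1}\subset V$. Because $D\tilde\phi(y)$ is invertible, the classical inverse function theorem yields an open neighborhood $U\subset B$ of $y$ such that $\tilde\phi(U)$ is open in $\mathbb{R}^m$. But $\tilde\phi(U)=\phi(U)\subset W\subset\mathbb{R}_+^m$, and an open subset of $\mathbb{R}^m$ contained in $\mathbb{R}_+^m$ can contain no point with vanishing first coordinate. Hence $\phi(y)_1>0$, so $\phi(y)\notin\partial W$. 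Running the symmetric argument on $\phi^{-1}$ gives the reverse inclusion, and bijectivity of $\phi$ forces $\phi(\partial V)=\partial W$.

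For part~(2), fix $y\in\partial V$ and $v\in\mathbb{R}_+^m$, so $v_1\ge0$. Since $y_1=0$ and $\tilde V$ is open in $\mathbb{R}^m$, the curve $\gamma(t):=y+tv$ lies in $\tilde V$ for $|t|$ small, and satisfies $\gamma(t)_1=tv_1\ge0$ for $t\ge0$; thus $\gamma(t)\in\tilde V\cap\mathbb{R}_+^m=V$ for small $t\ge0$. Consequently $\phi(\gamma(t))\in W\subset\mathbb{R}_+^m$, and by part~(1) $\phi(y)\in\partial W$, so $\phi(\gamma(0))_1=0$. Differentiating the nonnegative difference quotient yields
\[
(D\phi(y)v)_1=\left.\frac{d}{dt}\right|_{t=0^+}\phi(\gamma(t))_1=\lim_{t\downarrow0}\frac{\phi(\gamma(t))_1-0}{t}\ge0,
\]
which is precisely the claim $D\phi(y)v\in\mathbb{R}_+^m$.

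\textbf{Expected obstacle.} No step is really hard; the only subtle point is the transition from the intrinsic definition of $C^k$-map on the half-space to the full inverse function theorem in~(1). The care needed is to ensure that one may genuinely apply the Euclidean inverse function theorem on an open Euclidean neighborhood of an interior point of $V$, which is why I first extract invertibility of $D\phi(y)$ from the chain rule on the extensions $\tilde\phi,\tilde\psi$ (the extensions may a priori disagree off $V$, but on $V$ the composition equals the identity, which is all that is needed).
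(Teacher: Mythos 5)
Your argument is correct. The paper itself omits the proof of this lemma (it is stated as ``a standard result''), so there is no in-paper argument to compare with; your route -- invertibility of $D\tilde\phi(y)$ at interior points via the chain rule applied to the extensions, the Euclidean inverse function theorem to show interior points map to interior points, symmetry plus bijectivity to get $\phi(\partial V)=\partial W$, and then one-sided difference quotients along rays $y+tv$, $v\in\mathbb{R}_+^m$, $t\downarrow 0$, for part (2) -- is exactly the standard one, and every step goes through: at an interior point the identity $\tilde\psi\circ\tilde\phi=\mathrm{id}$ holds on a full Euclidean neighbourhood (after shrinking so that $\phi$ maps it into $W\subset\tilde W$), and in part (2) the full differentiability of $\tilde\phi$ at $y$ legitimises identifying the one-sided limit with $(D\phi(y)v)_1$, while part (1) supplies $\phi(y)_1=0$. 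One small slip to repair: the inclusion ``$B\subset\tilde V\cap(\mathbb{R}\setminus\{0\})\times\mathbb{R}^{m-1}\subset V$'' is false as written, since points of $\tilde V$ with negative first coordinate need not lie in $V$; simply choose the ball $B$ around $y$ with radius smaller than $y_1$ (and inside $\tilde V$), so that $B\subset\tilde V\cap\bigl((0,\infty)\times\mathbb{R}^{m-1}\bigr)\subset V$. Also note that your plan asserts invertibility of $D\phi(y)$ for \emph{every} $y\in V$, which at boundary points requires the extra observation that one-sided directional derivatives in directions spanning $\mathbb{R}_+^m$ already determine $D(\tilde\psi\circ\tilde\phi)(y)=I$; this is true, but your proof only ever uses invertibility at interior points, where your chain-rule argument is already complete.
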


Hence, boundary points of $ V $ are mapped to boundary points of $ W $ under a $C^k$-diffeomorphism.

\begin{definition}\label{def-submanifold}
Let $\mathcal{M} \subset H$ be a nonempty subset.
\begin{enumerate}
\item $\mathcal{M}$ is a \emph{$m$-dimensional
$C^k$-submanifold with boundary of $H$}, if for all $h \in
\mathcal{M}$ there exist an open neighborhood $U \subset H$ of $h$, an open
set $V \subset \mathbb{R}^m_+$ and a map $\phi \in C^k(V;H)$ such
that
\begin{enumerate}
\item $\phi : V \rightarrow U \cap \mathcal{M}$ is a homeomorphism;
\item $D \phi(y)$ is one to one for all $y \in V$.
\end{enumerate}
The map $\phi$ is called a \emph{parametrization} of $\mathcal{M}$
around $h$.

\item The \emph{boundary} of $ \mathcal{M} $ is defined as the set of all points $ h \in
\mathcal{M} $ such that $ \phi^{-1}(h) \in \partial V$ for some parametrization $ \phi : V \to H $ around $ h $. The set
of all boundary points is denoted by $ \partial \mathcal{M} $ and is
a submanifold without boundary of dimension $ m-1$ of $H$.
Parametrizations of $ \partial \mathcal{M} $ are provided by
restricting parametrizations $ \phi: V \to H $ of $\mathcal{M}$ to
the boundary $ \partial V $.
\end{enumerate}
\end{definition}

Notice that any submanifold is a submanifold with empty boundary. In
what follows, let $\mathcal{M}$ be a $m$-dimensional $C^k$-submanifold with boundary of $H$.

\begin{definition}
Let $h \in \mathcal{M}$ be arbitrary and let $\phi : V \subset \mathbb{R}_+^m \rightarrow U \cap \mathcal{M}$ be a parametrization around $h$.
\begin{enumerate}
\item The \emph{tangent space} to $\mathcal{M}$ at $h$ is the subspace
\begin{align}\label{def-t-space}
T_h \mathcal{M} := D \phi(y) \mathbb{R}^m, \quad y = \phi^{-1}(h) \in V.
\end{align}
\item For $ h \in \partial \mathcal{M} $ we can distinguish a half
space in $ T_h \mathcal{M} $, namely the set of all inward pointing
directions in $\mathcal{M}$, given by
\begin{align}\label{def-t-space-plus}
{(T_h \mathcal{M})}_+ := D \phi(y) \mathbb{R}^m_+, \quad y =
\phi^{-1}(h) \in \partial V.
\end{align}
\end{enumerate}
\end{definition}

\begin{remark}
By \cite[Lemma~6.1.1]{fillnm} and Lemma~\ref{lemma-diffeo-boundary},
the Definitions (\ref{def-t-space}), (\ref{def-t-space-plus}) of the tangent spaces $T_h \mathcal{M}$ and ${(T_h \mathcal{M})}_+$ are independent of the choice of the parametrization.
\end{remark}

Since parametrizations of $ \partial \mathcal{M} $ are provided by
restricting parametrizations $ \phi: V \subset \mathbb{R}_+^m \to U \cap \mathcal{M} $ of $\mathcal{M}$ to
the boundary $ \partial V $, for any $h \in \partial \mathcal{M}$ we have
\begin{align}\label{tangent-partial}
T_h \partial \mathcal{M} = D \phi(y) \partial \mathbb{R}_+^m, \quad y = \phi^{-1}(h) \in \partial V.
\end{align}
In particular, we see that
\begin{equation}\label{tangent-inclusions}
\begin{aligned}
T_h \partial \mathcal{M} &= (T_h \mathcal{M})_+ \cap - (T_h \mathcal{M})_+ \subset (T_h \mathcal{M})_+
\\ &\subset (T_h \mathcal{M})_+ \cup - (T_h \mathcal{M})_+ =  T_h \mathcal{M}, \quad h \in \partial \mathcal{M}.
\end{aligned}
\end{equation}
For a subset $A \subset H$ we define
\begin{align*}
A^{\perp} &:= \{ h \in H : \langle h,g \rangle = 0 \text{ for all $g \in A$} \},
\\ A^+ &:= \{ h \in H : \langle h,g \rangle \geq 0 \text{ for all $g \in A$} \}.
\end{align*}
In order to introduce the inward pointing normal vectors at boundary points of the submanifold $\mathcal{M}$, we require the following auxiliary result. The proof is elementary and therefore omitted.

\begin{lemma}\label{lemma-normal-unit}
For each $h \in \partial \mathcal{M}$ there exists a unique vector
$\eta_h \in (T_h \mathcal{M})_+ \cap (T_h \partial \mathcal{M})^{\perp}$ with $\| \eta_h \| = 1$ such that
\begin{align}\label{tang-decomp}
T_h \mathcal{M} = T_h \partial \mathcal{M} \oplus {\rm span} \{ \eta_h \}.
\end{align}
Moreover, for each $h \in \partial \mathcal{M}$ we have
\begin{align}\label{tang-boundary-normal}
T_h \partial \mathcal{M} &=  T_h \mathcal{M} \cap \{ \eta_h \}^{\perp},
\\ \label{tang-boundary-half} (T_h \mathcal{M})_+ &=  T_h \mathcal{M} \cap \{ \eta_h \}^+.
\end{align}
\end{lemma}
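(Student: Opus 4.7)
The plan is to work inside a parametrization $\phi: V \subset \mathbb{R}_+^m \to U \cap \mathcal{M}$ around $h$, with $y = \phi^{-1}(h) \in \partial V$, and use the injective linear map $L := D\phi(y): \mathbb{R}^m \to T_h \mathcal{M}$. By the definitions recalled in the paper, $L$ is an isomorphism onto $T_h \mathcal{M}$ that sends $\{0\} \times \mathbb{R}^{m-1}$ onto $T_h \partial \mathcal{M}$ and $\mathbb{R}_+^m$ onto $(T_h \mathcal{M})_+$.

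First I would construct $\eta_h$. Since $\dim T_h \partial \mathcal{M} = m-1$ inside the $m$-dimensional space $T_h \mathcal{M}$, the subspace $W := T_h \mathcal{M} \cap (T_h \partial \mathcal{M})^{\perp}$ is one-dimensional; let $\pi_W : T_h \mathcal{M} \to W$ denote the orthogonal projection. Because $e_1 \notin \{0\} \times \mathbb{R}^{m-1}$ and $L$ is injective, $Le_1 \notin T_h \partial \mathcal{M}$, so $w_0 := \pi_W(Le_1)$ is a nonzero vector in $W$. Crucially, $w_0 = Le_1 - t$ for some $t \in T_h \partial \mathcal{M}$, and writing $t = L(0,t_2,\ldots,t_m)$ gives $w_0 = L(1,-t_2,\ldots,-t_m) \in L(\mathbb{R}_+^m) = (T_h \mathcal{M})_+$. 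I set $\eta_h := w_0 / \|w_0\|$, which then lies in $(T_h \mathcal{M})_+ \cap (T_h \partial \mathcal{M})^{\perp}$ and has unit norm.

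Next I would prove the two equalities and the decomposition simultaneously, by observing the key identity: for any $v = Ly \in T_h \mathcal{M}$, one has $\pi_W(v) = y_1 w_0$ (because $L(0,y_2,\ldots,y_m) \in T_h \partial \mathcal{M}$ and therefore vanishes under $\pi_W$), hence
\begin{equation*}
\langle v, \eta_h \rangle = y_1 \|w_0\|.
\end{equation*}
This single formula immediately yields: $v \in T_h \partial \mathcal{M}$ iff $y_1 = 0$ iff $\langle v, \eta_h \rangle = 0$, giving (B.14); $v \in (T_h \mathcal{M})_+$ iff $y_1 \geq 0$ iff $\langle v, \eta_h \rangle \geq 0$, giving (B.15). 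The direct-sum decomposition $T_h \mathcal{M} = T_h \partial \mathcal{M} \oplus \mathrm{span}\{\eta_h\}$ is then immediate from $W \oplus T_h \partial \mathcal{M} = T_h \mathcal{M}$ and $W = \mathrm{span}\{\eta_h\}$.

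Finally, uniqueness: any candidate $\eta \in (T_h \mathcal{M})_+ \cap (T_h \partial \mathcal{M})^{\perp}$ with $\|\eta\| = 1$ lies in $W = \mathrm{span}\{\eta_h\}$, so $\eta = \pm \eta_h$; the minus sign is excluded by (B.15) applied to $\eta$, which forces $\langle \eta, \eta_h\rangle \geq 0$. There is no real obstacle here beyond bookkeeping; the only slightly delicate point is verifying that $\eta_h$ itself is \emph{inward pointing}, which is the reason for the explicit computation $w_0 = L(1,-t_2,\ldots,-t_m)$ exhibiting $w_0$ as the image under $L$ of a vector with nonnegative first coordinate. Everything else is a consequence of the linear-algebraic fact that $L$ transports the half-space structure of $\mathbb{R}_+^m$ onto $(T_h \mathcal{M})_+$.
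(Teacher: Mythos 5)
Your proof is correct: the paper itself omits the argument as ``elementary,'' and what you give is precisely the expected route, transporting the half-space structure of $\mathbb{R}_+^m$ through the injective derivative $D\phi(y)$ and reading off $\eta_h$ as the normalized orthogonal component of $D\phi(y)e_1$ relative to $T_h\partial\mathcal{M}$. The key identity $\langle v,\eta_h\rangle = y_1\|w_0\|$ also anticipates Lemma \ref{lemma-repr}, and your uniqueness and inward-pointing checks are complete, so there is nothing to add.
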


\begin{definition}\label{def-normal-unit}
For each $h \in \partial \mathcal{M}$ we call $\eta_h$ the \emph{inward pointing normal vector} to $\partial \mathcal{M}$ at $h$.
\end{definition}

In the sequel, the vector $e_1 \in \mathbb{R}^m$ denotes the first unit vector $e_1 = (1,0,\ldots,0)$.

\begin{lemma}\label{lemma-repr}
Let $\phi : V \subset \mathbb{R}_+^m \rightarrow U \cap \mathcal{M}$
be a parametrization. Then, for every $h \in U \cap \partial \mathcal{M}$ there exists a unique number $\lambda > 0$ such that
\begin{align}\label{repr-eta}
\langle \eta_h,D\phi(y)v \rangle = \lambda \langle e_1,v
\rangle \quad \text{for all $v \in \mathbb{R}^m$,}
\end{align}
where $y = \phi^{-1}(h)$.
\end{lemma}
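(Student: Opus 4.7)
The plan is to exploit the direct-sum decomposition $\mathbb{R}^m = \mathbb{R} e_1 \oplus \partial \mathbb{R}_+^m$ on the domain side together with the orthogonal decomposition $T_h \mathcal{M} = T_h \partial \mathcal{M} \oplus \mathrm{span}\{\eta_h\}$ from Lemma \ref{lemma-normal-unit} on the codomain side, and to observe that $D\phi(y)$ maps the former decomposition onto the latter. First I would write every $v \in \mathbb{R}^m$ as $v = \langle e_1,v\rangle e_1 + v'$ with $v' \in \partial \mathbb{R}_+^m$, apply $D\phi(y)$, and use that $D\phi(y) v' \in D\phi(y) \partial \mathbb{R}_+^m = T_h \partial \mathcal{M}$ by (\ref{tangent-partial}). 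Since $\eta_h \in (T_h \partial \mathcal{M})^{\perp}$, pairing with $\eta_h$ kills this component and yields
\begin{align*}
\langle \eta_h, D\phi(y) v \rangle = \langle e_1, v \rangle \cdot \langle \eta_h, D\phi(y) e_1 \rangle,
\end{align*}
so the required $\lambda$ must equal $\langle \eta_h, D\phi(y) e_1\rangle$; uniqueness is then immediate by evaluating the identity at $v = e_1$.

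It remains to verify that $\lambda := \langle \eta_h, D\phi(y) e_1\rangle$ is strictly positive. Since $e_1 \in \mathbb{R}_+^m$, the definition (\ref{def-t-space-plus}) gives $D\phi(y) e_1 \in (T_h \mathcal{M})_+$, and identity (\ref{tang-boundary-half}) from Lemma \ref{lemma-normal-unit} yields $\lambda \geq 0$. To upgrade this to strict positivity, I would argue by contradiction: if $\lambda = 0$, then $D\phi(y) e_1$ lies in $T_h \mathcal{M} \cap \{\eta_h\}^{\perp} = T_h \partial \mathcal{M}$ by (\ref{tang-boundary-normal}), hence in $D\phi(y) \partial \mathbb{R}_+^m$ by (\ref{tangent-partial}). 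Injectivity of $D\phi(y)$ then forces $e_1 \in \partial \mathbb{R}_+^m$, contradicting the fact that the first coordinate of $e_1$ equals $1$.

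There is really no main obstacle here: the statement is essentially a bookkeeping exercise compatible with the tangent-space machinery already built up in Lemma \ref{lemma-normal-unit}. The only subtlety worth flagging is the strict inequality $\lambda > 0$, where the argument genuinely uses both the injectivity of $D\phi(y)$ and the compatibility between parametrization boundary $\partial V$ and manifold boundary $\partial \mathcal{M}$ encoded in Lemma \ref{lemma-diffeo-boundary}; the weak inequality $\lambda \geq 0$ alone would not suffice for applications of this lemma (e.g., in proofs of Theorem \ref{thm-OU} where $\det B > 0$ is deduced).
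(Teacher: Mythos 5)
Your proof is correct and takes essentially the same route as the paper: the paper represents the functional $v \mapsto \langle \eta_h, D\phi(y)v \rangle$ by a vector $z$ and identifies its kernel with $\partial \mathbb{R}_+^m$ to get $z = \lambda e_1$, which is exactly your decomposition $v = \langle e_1,v\rangle e_1 + v'$ combined with $\eta_h \perp T_h \partial \mathcal{M}$. Your positivity argument — $\lambda \geq 0$ from (\ref{tang-boundary-half}), and $\lambda \neq 0$ via (\ref{tang-boundary-normal}), (\ref{tangent-partial}) and the injectivity of $D\phi(y)$ — likewise coincides with the paper's.
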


\begin{proof}
Let $h \in U \cap \partial \mathcal{M}$ be arbitrary.
We define the continuous linear functional
\begin{align*}
\ell : \mathbb{R}^m \rightarrow \mathbb{R}, \quad \ell(v) := \langle
\eta_h, D\phi(y) v \rangle.
\end{align*}
There is a unique $z \in \mathbb{R}^m$ such that
\begin{align}\label{Frechet-Riesz-Rm}
\ell(v) = \langle z,v \rangle \quad \text{for all $v \in \mathbb{R}^m$.}
\end{align}
In order to complete the proof, we shall show that $z = \lambda e_1$ for some $\lambda > 0$. By identity (\ref{tang-boundary-normal}) from Lemma~\ref{lemma-normal-unit}, for any $v \in \mathbb{R}^m$
we have $\ell(v) = 0$ if and only if $D \phi(y)v \in T_h \partial \mathcal{M}$, which, in view of (\ref{tangent-partial}), means that
$v \in \partial \mathbb{R}_+^m$. This shows $\ker(\ell) =
\partial \mathbb{R}_+^m$, and hence, there exists a unique $\lambda \in \mathbb{R}$ such that $z = \lambda e_1$. Consequently, identity (\ref{repr-eta}) is valid. By (\ref{def-t-space-plus}), (\ref{tangent-partial}) we have $D \phi(y) e_1 \in (T_h \mathcal{M})_+ \setminus T_h \partial \mathcal{M}$, and hence, inserting $v = e_1$ into (\ref{repr-eta}), by (\ref{tang-boundary-normal}), (\ref{tang-boundary-half}) we obtain
\begin{align*}
\lambda = \lambda \langle e_1,e_1 \rangle = \langle \eta_h, D \phi(y) e_1 \rangle > 0,
\end{align*}
finishing the proof.
\end{proof}

In the sequel, for $h_0 \in H$ and $\epsilon > 0$ we denote by $B_{\epsilon}(h_0)$ the open ball
\begin{align*}
B_{\epsilon}(h_0) = \{ h \in H : \| h - h_0 \| < \epsilon \}.
\end{align*}

\begin{lemma}\label{lemma-mf-closed-set}
For each $h_0 \in \mathcal{M}$ there exists $\epsilon_0 > 0$ such that for all $0 < \epsilon \leq \epsilon_0$ the following statements are true:
\begin{enumerate}
\item The set $\overline{B_{\epsilon}(h_0)} \cap \mathcal{M}$ is compact.

\item We have $B_{\epsilon}(h_0) \cap \overline{\mathcal{M}} \subset \overline{B_{\epsilon}(h_0)} \cap \mathcal{M}$.
\end{enumerate}
\end{lemma}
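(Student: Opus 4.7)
The plan is to exploit the fact that $\mathcal{M}$ is locally homeomorphic to a locally compact subset of $\mathbb{R}^m_+$. Fix a parametrization $\phi : V \subset \mathbb{R}^m_+ \to U \cap \mathcal{M}$ around $h_0$ with $y_0 := \phi^{-1}(h_0) \in V$, and choose $\delta > 0$ so small that $L := \overline{B_\delta(y_0)} \cap \mathbb{R}^m_+ \subset V$; this is possible because $V$ is open in $\mathbb{R}^m_+$. The set $L$ is a closed, bounded subset of $\mathbb{R}^m$, hence compact, so $K := \phi(L)$ is a compact (and therefore closed) subset of $H$ contained in $\mathcal{M}$. Since $\phi$ is a homeomorphism onto $U \cap \mathcal{M}$ and $U$ is open in $H$, the set $W' := \phi(B_\delta(y_0) \cap \mathbb{R}^m_+)$ is open in $\mathcal{M}$ (with the subspace topology), so there exists an open subset $W \subset H$ with $W \cap \mathcal{M} = W'$; in particular, $h_0 \in W' \subset W \cap K$.

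Because $W$ is an open neighborhood of $h_0$ in $H$, we may pick $\epsilon_0 > 0$ small enough so that $\overline{B_{\epsilon_0}(h_0)} \subset W$. Then for every $\epsilon \in [0,\epsilon_0]$,
\begin{align*}
\overline{B_\epsilon(h_0)} \cap \mathcal{M} \subset \overline{B_{\epsilon_0}(h_0)} \cap \mathcal{M} \subset W \cap \mathcal{M} = W' \subset K,
\end{align*}
so $\overline{B_\epsilon(h_0)} \cap \mathcal{M} = \overline{B_\epsilon(h_0)} \cap K$, which is the intersection of the closed set $\overline{B_\epsilon(h_0)}$ with the compact set $K$, hence compact. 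This proves~(1).

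For~(2), take $h \in B_\epsilon(h_0) \cap \overline{\mathcal{M}}$ and a sequence $(h_n) \subset \mathcal{M}$ with $h_n \to h$. Since $h \in B_\epsilon(h_0)$ and $B_\epsilon(h_0)$ is open, eventually $h_n \in B_\epsilon(h_0) \subset W$, so $h_n \in W \cap \mathcal{M} = W' \subset K$. Closedness of $K$ in $H$ yields $h \in K \subset \mathcal{M}$, and consequently $h \in B_\epsilon(h_0) \cap \mathcal{M} \subset \overline{B_\epsilon(h_0)} \cap \mathcal{M}$. The only point requiring care is the existence of the open set $W$ — that is, lifting the relative openness of $W'$ in $\mathcal{M}$ back to openness in $H$ — which is immediate from the definition of a parametrization as a homeomorphism onto the relatively open set $U \cap \mathcal{M}$; the rest of the argument is routine point-set topology, so I do not anticipate any genuine obstacle.
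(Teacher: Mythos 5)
Your proof is correct and follows essentially the same route as the paper's: pull back to a compact neighborhood $L=\overline{B_\delta(y_0)}\cap\mathbb{R}_+^m$ in the chart, choose $\epsilon_0$ so that $\overline{B_{\epsilon_0}(h_0)}\cap\mathcal{M}$ lands inside the compact image $\phi(L)$, obtain compactness of $\overline{B_\epsilon(h_0)}\cap\mathcal{M}$ as a closed subset of a compact set, and prove (2) by a sequence argument using closedness. The only cosmetic difference is that you invoke closedness of $K=\phi(L)$ in $H$ for part (2), whereas the paper uses the closedness of $\overline{B_\epsilon(h_0)}\cap\mathcal{M}$ already established in part (1); both are immediate from compactness, so the arguments are interchangeable.
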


\begin{proof}
Let $h_0 \in \mathcal{M}$ be arbitrary, let $\phi : V \subset \mathbb{R}_+^m \rightarrow U \cap \mathcal{M}$ be a parametrization around $h_0$ and set $y_0 := \phi^{-1}(h_0) \in V$. Since $V$ is open in $\mathbb{R}_+^m$, there exist $X \subset K \subset V$ such that $X$ is open in $\mathbb{R}_+^m$ and $K$ is compact. Since $\phi : V \rightarrow U \cap \mathcal{M}$ is a homeomorphism, $\phi(X)$ is open in $U \cap \mathcal{M}$ and $\phi(K)$ is compact. Therefore, and since $U$ is an open neighborhood of $h_0$, there exists $\epsilon_0 > 0$ such that
\begin{align*}
\overline{B_{\epsilon_0}(h_0)} \subset U \quad \text{and} \quad \overline{B_{\epsilon_0}(h_0)} \cap (U \cap \mathcal{M}) \subset \phi(X).
\end{align*}
Let $0 < \epsilon \leq \epsilon_0$ be arbitrary. Since $\phi(X) \subset \phi(K) \subset U \cap \mathcal{M}$, we have the identity
\begin{align*}
\overline{B_{\epsilon}(h_0)} \cap \mathcal{M} = \overline{B_{\epsilon}(h_0)} \cap \phi(K),
\end{align*}
showing that $\overline{B_{\epsilon}(h_0)} \cap \mathcal{M}$ is closed in $\phi(K)$. Since $\phi(K)$ is compact, we deduce that $\overline{B_{\epsilon}(h_0)} \cap \mathcal{M}$ is compact, establishing the first statement.

For the proof of the second statement, let $h \in B_{\epsilon}(h_0) \cap \overline{\mathcal{M}}$ be arbitrary. Since $h \in \overline{\mathcal{M}}$, there exists a sequence $(h_n)_{n \in \mathbb{N}} \subset \mathcal{M}$ with $h_n \rightarrow h$. Therefore, and since $h \in B_{\epsilon}(h_0)$, there exists an index $n_0 \in \mathbb{N}$ such that $h_n \in B_{\epsilon}(h_0)$ for all $n \geq n_0$. Consequently, we have $h_n \in \overline{B_{\epsilon}(h_0)} \cap \mathcal{M}$ for all $n \geq n_0$. By the closedness of $\overline{B_{\epsilon}(h_0)} \cap \mathcal{M}$ we deduce that $h \in \overline{B_{\epsilon}(h_0)} \cap \mathcal{M}$, completing the proof.
\end{proof}

\begin{proposition}\label{prop-MN}
Let $\mathcal{M}_0 \subset H$ be a $m$-dimensional $C^k$-submanifold with boundary of $H$, let $h_0 \in \mathcal{M}$ be arbitrary and let $D \subset H$ be a dense subset. Then there exist
\begin{itemize}
\item a constant $\epsilon > 0$ such that $\mathcal{M} := B_{\epsilon}(h_0) \cap \mathcal{M}_0$ is a $m$-dimensional $C^k$-submanifold with boundary of $H$,

\item a $m$-dimensional $C^k$-submanifold $\mathcal{N}$ with boundary of $\mathbb{R}^m$,

\item parametrizations $\phi : V \rightarrow \mathcal{M}$ and $\psi : V \rightarrow \mathcal{N}$,

\item and elements $\zeta_1,\ldots,\zeta_m \in D$
such that the mapping $f := \phi \circ \psi^{-1} : \mathcal{N} \rightarrow \mathcal{M}$ has the inverse
\begin{align}\label{conv-para}
f^{-1} : \mathcal{M} \rightarrow \mathcal{N}, \quad f^{-1}(h) = \langle \zeta,h \rangle := (\langle \zeta_1,h \rangle,\ldots,\langle \zeta_m,h \rangle).
\end{align}
\end{itemize}
In other words, the diagram
\begin{align*}
\begin{xy}
  \xymatrix{
      \mathcal{N} \subset \mathbb{R}^m \ar@<2pt>[rr]^f  &     &  \mathcal{M} \subset H \ar@<2pt>[ll]^{\langle \zeta,\bullet \rangle} \\
                             & \ar[ul]_{\psi} V \subset \mathbb{R}_+^m \ar[ru]^{\phi} &
  }
\end{xy}
\end{align*}
commutes. Furthermore, the mappings $\phi$, $\psi$, $\Phi := \phi^{-1}$, $\Psi := \psi^{-1}$ have extensions $\phi \in C_{b}^k(\mathbb{R}^m;H)$, $\psi \in C_{b}^k(\mathbb{R}^m)$, $\Phi \in C_{b}^k(H;\mathbb{R}^m)$, $\Psi \in C_{b}^k(\mathbb{R}^m)$.
\end{proposition}

\begin{proof}
Taking into account \cite[Proposition~6.1.2]{fillnm}, there exist
\begin{itemize}
\item a constant $\epsilon > 0$,

\item a $m$-dimensional $C^k$-submanifold $\tilde{\mathcal{M}}$ of $H$ without boundary,

\item a parametrization $\tilde{\phi} : \tilde{V} \subset \mathbb{R}^m \rightarrow \tilde{\mathcal{M}}$ and such that $\tilde{\phi}(V) = \mathcal{M}$, where $V := \tilde{V} \cap \mathbb{R}_+^m$ and $\mathcal{M} := B_{\epsilon}(h_0) \cap \mathcal{M}_0$,

\item elements $\zeta_1,\ldots,\zeta_m \in D$ and a parametrization $\tilde{f} : \tilde{\mathcal{N}} \subset \mathbb{R}^m \rightarrow \tilde{\mathcal{M}}$ with inverse
\begin{align*}
\tilde{f}^{-1} : \tilde{\mathcal{M}} \rightarrow \tilde{\mathcal{N}}, \quad \tilde{f}^{-1}(h) = \langle \zeta,h \rangle := (\langle \zeta_1,h \rangle,\ldots,\langle \zeta_m,h \rangle).
\end{align*}
\end{itemize}
We set $\phi := \tilde{\phi}|_V$, $\mathcal{N} := \tilde{f}^{-1}(\mathcal{M})$, $f := \tilde{f}|_{\mathcal{N}}$ and $\psi := f^{-1} \circ \phi$. Then $\phi : V \subset \mathbb{R}_+^m \rightarrow \mathcal{M}$ is a parametrization, $\mathcal{N}$ is a $m$-dimensional $C^k$-submanifold with boundary of $\mathbb{R}^m$ and $\psi : V \subset \mathbb{R}_+^m \rightarrow \mathcal{N}$ is a parametrization.

By the inverse mapping theorem, see \cite[Theorem~2.5.2]{Abraham}, the parametrization $\psi$ is a local diffeomorphism. Hence, arguing as in \cite[Remark 6.1.1]{fillnm}, we may assume that the mappings $\phi$, $\psi$, $\Phi := \phi^{-1}$, $\Psi := \psi^{-1}$ (after restricting to smaller neighborhoods, if necessary) have the desired extensions.
\end{proof}

\begin{lemma}\label{lemma-inv-special}
Let $h \in \mathcal{M}$ be arbitrary and let $\phi : V \subset \mathbb{R}_+^m \rightarrow U \cap \mathcal{M}$ be a parametrization around $h$ such that $\Phi := \phi^{-1}$ has an extension $\Phi \in C^k(H;\mathbb{R}^m)$. Then we have
\begin{align*}
D \phi(y)^{-1} w = D \Phi(h) w \quad \text{for all $w \in T_h \mathcal{M}$,}
\end{align*}
where $y = \phi^{-1}(h)$.
\end{lemma}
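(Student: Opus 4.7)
The plan is to exploit the identity $\Phi \circ \phi = \mathrm{id}_V$ via the chain rule, combined with the $C^k$-extension guaranteed by Definition \ref{def-Ck-map}. First I would pick an open set $\tilde V \subset \mathbb{R}^m$ with $\tilde V \cap \mathbb{R}_+^m = V$ together with an extension $\tilde \phi \in C^k(\tilde V; H)$ such that $\tilde \phi|_V = \phi$, and consider the composition $G := \Phi \circ \tilde \phi : \tilde V \to \mathbb{R}^m$, which is well-defined and $C^k$ on the open set $\tilde V$. On $V$ we have $G = \mathrm{id}$, and the key observation is that $V$ has non-empty interior in $\mathbb{R}^m$ (it is the intersection of an open set with the closed half-space $\mathbb{R}_+^m$).

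Next I would compute $DG(y)$ for $y \in V$. At interior points of $V$ (where $y_1 > 0$) the relation $G = \mathrm{id}$ holds on an open neighborhood, so $DG(y) = I_{\mathbb{R}^m}$ directly. For boundary points $y \in \partial V$ (where $y_1 = 0$), the identity only holds on a half-space, so I would use continuity of $DG$: approaching $y$ from the interior of $V$, where $DG \equiv I$, one gets $DG(y) = I_{\mathbb{R}^m}$ as well. Applying the chain rule and recalling that $D\phi(y) = D\tilde\phi(y)$ by the convention of Definition \ref{def-Ck-map}, this yields
\begin{equation*}
D\Phi(h) \circ D\phi(y) = I_{\mathbb{R}^m}.
\end{equation*}

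To finish, let $w \in T_h \mathcal{M}$. By the very definition \eqref{def-t-space} of the tangent space, $T_h \mathcal{M} = D\phi(y) \mathbb{R}^m$, and since $D\phi(y)$ is injective (Definition \ref{def-submanifold}), there is a unique $v \in \mathbb{R}^m$ with $w = D\phi(y)v$; by definition $D\phi(y)^{-1}w = v$. Applying the identity from the previous paragraph gives $D\Phi(h) w = D\Phi(h) D\phi(y) v = v$, which equals $D\phi(y)^{-1}w$, finishing the proof.

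I do not expect a serious obstacle here. The only subtle point is the extension of $DG(y) = I$ from the interior of $V$ to its boundary, but this is immediate from continuity of the derivative of a $C^k$-map with $k \geq 1$.
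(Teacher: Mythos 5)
Your proof is correct and follows essentially the same route as the paper, whose proof is simply the one-line observation that $D\Phi(h)\,D\phi(y) = D(\Phi \circ \phi)(y) = \mathrm{Id}|_{\mathbb{R}^m}$; you merely spell out the details (the extension $\tilde\phi$, the continuity argument at boundary points of $V$, and the injectivity of $D\phi(y)$) that the paper leaves implicit.
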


\begin{proof}
The identity $D \Phi(h) D\phi(y) = D(\Phi \circ \phi)(y) = {\rm Id}|_{\mathbb{R}^m}$ yields the assertion.
\end{proof}

In what follows, let $\mathcal{M}$ be a $m$-dimensional $C^3$-submanifold with boundary of~$H$.

\begin{lemma}\label{lemma-decomp-pre}
Let $\phi : V \subset \mathbb{R}_+^m \rightarrow U \cap \mathcal{M}$ be a parametrization and let $\sigma \in C^1(H)$ be a mapping such that
\begin{align}\label{sigma-tang-decomp}
\sigma(h) \in T_h \mathcal{M}, \quad h \in U \cap \mathcal{M}.
\end{align}
We define the mapping
\begin{align}\label{def-Sigma-decomp}
\theta : V \rightarrow \mathbb{R}^m, \quad \theta(y) := D \phi(y)^{-1} \sigma(h), \quad \text{where $h := \phi(y) \in U \cap \mathcal{M}$.}
\end{align}
\begin{enumerate}
\item For each $h \in U \cap \mathcal{M}$ we have the decomposition
\begin{align}\label{dec-general-1}
D \sigma(h) \sigma(h) = D\phi(y)(D \theta(y) \theta(y)) + D^2 \phi(y)(\theta(y),\theta(y)),
\end{align}
where $y = \phi^{-1}(h) \in V$.

\item If, moreover, we have
\begin{align}\label{sigma-tang-decomp-boundary}
\sigma(h) \in T_h \partial \mathcal{M}, \quad h \in U \cap \partial \mathcal{M},
\end{align}
then for each $h \in U \cap \partial \mathcal{M}$ we have
\begin{align}\label{dec-general-2}
\langle \eta_h,D \sigma(h) \sigma(h) \rangle = \langle \eta_h, D^2 \phi(y)(\theta(y),\theta(y)) \rangle,
\end{align}
where $y = \phi^{-1}(h) \in \partial V$.
\end{enumerate}
\end{lemma}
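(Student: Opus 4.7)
The plan is to prove both statements by differentiating the defining identity $\sigma(\phi(y)) = D\phi(y)\theta(y)$, which follows immediately from the definition~(\ref{def-Sigma-decomp}) together with~(\ref{sigma-tang-decomp}). Setting $\Sigma := \sigma \circ \phi : V \rightarrow H$, this identity can be differentiated in direction $v \in \mathbb{R}^m$ by two routes: the chain rule on the left gives $D\Sigma(y)v = D\sigma(\phi(y))D\phi(y)v$, while the product rule on the right gives $D\Sigma(y)v = D^2\phi(y)(\theta(y),v) + D\phi(y)D\theta(y)v$. Equating the two expressions, specializing $v := \theta(y)$, and invoking $D\phi(y)\theta(y) = \sigma(\phi(y)) = \sigma(h)$ on the outer $D\sigma(\phi(y))$ factor yields exactly the decomposition (\ref{dec-general-1}), which settles statement~(1).

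For statement~(2), first I would show that the hypothesis $\sigma(h) \in T_h\partial\mathcal{M}$ for $h \in U \cap \partial\mathcal{M}$ forces the first component of $\theta$ to vanish on $\partial V$. Indeed, for $y \in \partial V$ the identity $D\phi(y)\theta(y) = \sigma(\phi(y)) \in T_h \partial\mathcal{M}$ combined with the description of the tangent space to the boundary from (\ref{tangent-partial}), namely $T_h\partial\mathcal{M} = D\phi(y)\partial\mathbb{R}_+^m$, and the injectivity of $D\phi(y)$ give $\theta(y) \in \partial\mathbb{R}_+^m$, i.e. $\theta_1(y) = 0$ on $\partial V$.

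Differentiating the scalar function $\theta_1$ in any tangential direction $v \in \partial\mathbb{R}_+^m$ at a boundary point therefore yields $D\theta_1(y)v = 0$; since $\theta(y) \in \partial\mathbb{R}_+^m$ for $y \in \partial V$, this applies in particular to $v = \theta(y)$, so $\langle e_1, D\theta(y)\theta(y)\rangle = 0$. Now I apply Lemma~\ref{lemma-repr}: there exists $\lambda > 0$ such that $\langle \eta_h, D\phi(y)w \rangle = \lambda\langle e_1, w\rangle$ for every $w \in \mathbb{R}^m$. Choosing $w := D\theta(y)\theta(y)$, this inner product vanishes, so taking $\langle \eta_h, \cdot\rangle$ of the identity (\ref{dec-general-1}) kills the first summand and leaves precisely~(\ref{dec-general-2}).

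The only delicate step is the argument $\theta_1 \equiv 0$ on $\partial V$; once that is in hand, the rest is a mechanical application of Lemma~\ref{lemma-repr} and part~(1). I expect no further obstacles because all the regularity needed (differentiability of $\theta$, symmetry of $D^2\phi$, invertibility of $D\phi(y)$ on $T_h\mathcal{M}$) is built into the hypotheses $\sigma \in C^1(H)$, $\phi \in C^3$ and the definition of a submanifold.
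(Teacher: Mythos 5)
Your proof is correct and follows essentially the same route as the paper: both parts rest on differentiating the identity $\sigma(\phi(y)) = D\phi(y)\theta(y)$, and for the boundary statement on the vanishing of $\langle e_1,\theta(y)\rangle$ along $\partial V$ combined with Lemma \ref{lemma-repr}. The only difference is presentational: the paper differentiates along the curve $t\mapsto \phi(y+\Lambda t\,\theta(y))$ with $\Lambda\in\{-1,1\}$ chosen so the curve stays in $V$ at boundary points, whereas you differentiate the identity wholesale and specialize $v=\theta(y)$, which at points $y\in\partial V$ should be justified by the (one-line) remark that both sides of the differentiated identity are continuous on $V$ and agree on the interior, hence agree at the boundary.
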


\begin{proof}
Let $h \in U \cap \mathcal{M}$ be arbitrary and set $y := \phi^{-1}(h) \in V$. There exist $\epsilon > 0$ and $\Lambda \in \{ -1,1 \}$ such that
\begin{align}\label{curve-well}
y + \Lambda t \theta(y) \in V \quad \text{for all $t \in [0,\epsilon)$.}
\end{align}
Consequently, the curve
\begin{align*}
c : [0,\epsilon) \rightarrow U \cap \mathcal{M}, \quad c(t) := \phi(y + \Lambda t \theta(y))
\end{align*}
is well-defined and we have $c \in C^1([0,\epsilon);H)$. Note that
\begin{align*}
c(0) = h \quad \text{and} \quad \frac{d}{dt}c(t) \Big|_{t=0} = \Lambda D \phi(y) \theta(y) = \Lambda \sigma(h)
\end{align*}
by the Definition (\ref{def-Sigma-decomp}) of $\theta$. Therefore, we have
\begin{align*}
\frac{d}{dt} \sigma(c(t)) \Big|_{t=0} = \Lambda D\sigma(h)\sigma(h).
\end{align*}
On the other hand, by (\ref{def-Sigma-decomp}),
\begin{align*}
\frac{d}{dt} \sigma(c(t))\Big|_{t=0} &= \frac{d}{dt} D\phi(y + \Lambda t \theta(y)) \theta(y + \Lambda t \theta(y)) \Big|_{t=0}
\\ &= \Lambda \big( D\phi(y)(D \theta(y) \theta(y)) + D^2 \phi(y)(\theta(y),\theta(y)) \big).
\end{align*}
Combining the latter two identities yields (\ref{dec-general-1}), proving the first statement.

Now, suppose that (\ref{sigma-tang-decomp-boundary}) is satisfied. Then we have
$\theta(y) \in \partial \mathbb{R}_+^m$ for all $y \in \partial V$, and therefore
\begin{align}\label{cond-Sigma-Y-concr}
\langle e_1,\theta(y) \rangle = 0 \quad \text{for all $y \in \partial V$.}
\end{align}
Let $h \in U \cap \partial \mathcal{M}$ be arbitrary and set $y := \phi^{-1}(h) \in \partial V$. There exist $\epsilon > 0$ and $\Lambda \in \{ -1,1 \}$ such that (\ref{curve-well}) is satisfied. Moreover, we have
\begin{align*}
\langle e_1,y + \Lambda t \theta(y) \rangle = \langle e_1,y \rangle + \Lambda t \langle e_1,\theta(y) \rangle = 0 \quad \text{for all $t \in [0,\epsilon)$,}
\end{align*}
which gives us
\begin{align*}
y + \Lambda t \theta(y) \in \partial V \quad \text{for all $t \in [0,\epsilon)$.}
\end{align*}
Consequently, using Lemma~\ref{lemma-repr} and (\ref{cond-Sigma-Y-concr}), for some $\lambda > 0$ we obtain
\begin{align*}
&\langle \eta_h,D\phi(y)(D\theta(y)\theta(y)) \rangle = \lambda \langle e_1,D\theta(y)\theta(y) \rangle
\\ &= \lambda \lim_{t \downarrow 0} \frac{\langle e_1,
\theta(y + \Lambda t \theta(y)) \rangle - \langle e_1,\theta(y) \rangle}{t}
= 0.
\end{align*}
In view of (\ref{dec-general-1}), identity (\ref{dec-general-2}) follows, establishing the second statement.
\end{proof}

Let $\gamma : H \times E \rightarrow H$ be a mapping fulfilling conditions (\ref{Lipschitz-gamma-rho}), (\ref{linear-growth-rho}) with the mappings $\rho_n : E \rightarrow \mathbb{R}_+$, $n \in \mathbb{N}$ satisfying (\ref{rho-square-int}).

\begin{definition}
We introduce the following notions:
\begin{enumerate}
\item Let $h_0 \in \mathcal{M}$ be arbitrary. We say that $\gamma$ satisfies the \emph{$\epsilon$-$\delta$-jump condition in $h_0$}, if there exists $\epsilon_0 > 0$ such that for every $0 < \epsilon \leq \epsilon_0$ the set $\overline{B_{\epsilon}(h_0)} \cap \mathcal{M}$ is compact, and there are $0 < \delta < \epsilon$ and a set $B \in \mathcal{E}$ with $F(B^c) < \infty$ such that
\begin{align}\label{jumps-eps-delta}
h + \gamma(h,x) \in \overline{B_{\epsilon}(h_0)} \cap \mathcal{M} \quad \text{for $F$-almost all $x \in B$,} \quad \text{for all $h \in B_{\delta}(h_0) \cap \mathcal{M}$.}
\end{align}
\item We say that $\gamma$ satisfies the \emph{$\epsilon$-$\delta$-jump condition on $\mathcal{M}$}, if $\gamma$ satisfies the $\epsilon$-$\delta$-jump condition in $h_0$ for each $h_0 \in \mathcal{M}$.
\end{enumerate}
\end{definition}

\begin{lemma}\label{lemma-jumps-eps-delta}
Let $h_0 \in \mathcal{M}$ be such that for some neighborhood $U$ of $h_0$ we have
\begin{align}\label{jumps-on-O}
h + \gamma(h,x) \in \overline{\mathcal{M}} \quad \text{for $F$-almost all $x \in E$,} \quad \text{for all $h \in U \cap \mathcal{M}$.}
\end{align}
Then $\gamma$ satisfies the $\epsilon$-$\delta$-jump condition in $h_0$.
\end{lemma}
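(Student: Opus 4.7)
The plan is to combine three ingredients: the geometric localization provided by Lemma~\ref{lemma-mf-closed-set}, the quantifier flip in Lemma~\ref{lemma-jumps-exact}, and the $L^2$-tail bound on $\rho_n$ coming from (\ref{rho-integrable}). First, I would apply Lemma~\ref{lemma-mf-closed-set} at $h_0$ to obtain $\epsilon_0>0$ such that for every $0\leq\epsilon\leq\epsilon_0$ the set $\overline{B_\epsilon(h_0)}\cap\mathcal{M}$ is compact and $B_\epsilon(h_0)\cap\overline{\mathcal{M}}\subset\overline{B_\epsilon(h_0)}\cap\mathcal{M}$, shrinking $\epsilon_0$ further if necessary so that $\overline{B_{\epsilon_0}(h_0)}\subset U$. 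This settles the compactness clause in the $\epsilon$--$\delta$--jump condition, and the inclusion will later let me upgrade ``close to $h_0$ in $\overline{\mathcal{M}}$'' to ``in $\overline{B_\epsilon(h_0)}\cap\mathcal{M}$''.

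Second, to replace the per-$h$ exceptional null set implicit in the hypothesis by a single null set independent of $h$, I would apply Lemma~\ref{lemma-jumps-exact} to the map $(h,x)\mapsto h+\gamma(h,x)$, the Borel set $U\cap\mathcal{M}\subset H$, and the closed set $\overline{\mathcal{M}}$. Continuity of $\gamma(\bullet,x)$ for each $x$ is guaranteed by (\ref{Lipschitz-gamma-rho}), measurability is a standing assumption, and separability of the ambient space $H$ gives separability of $U\cap\mathcal{M}$. The output is a single $F$-nullset $N\subset E$ such that $h+\gamma(h,x)\in\overline{\mathcal{M}}$ for every $h\in U\cap\mathcal{M}$ and every $x\in E\setminus N$.

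Third, I would produce $B$ and $\delta$ by truncating the $L^2$-tail of $\rho_n$. Fix $0<\epsilon\leq\epsilon_0$ and pick $n\in\mathbb{N}$ large enough that $B_{\epsilon_0}(h_0)\subset\{h\in H:\|h\|\leq n\}$. By (\ref{rho-integrable}) and Chebyshev's inequality the set
\begin{align*}
B:=\{x\in E:\rho_n(x)\leq\epsilon/2\}
\end{align*}
satisfies $F(B^c)\leq(2/\epsilon)^2\int_E\rho_n(x)^2\,F(dx)<\infty$. Choosing any $0<\delta<\epsilon/2$, for every $h\in B_\delta(h_0)\cap\mathcal{M}\subset U\cap\mathcal{M}$ and every $x\in B\setminus N$, the bound (\ref{linear-growth-rho}) gives $\|\gamma(h,x)\|\leq\rho_n(x)\leq\epsilon/2$, so $\|h+\gamma(h,x)-h_0\|<\delta+\epsilon/2<\epsilon$, while simultaneously $h+\gamma(h,x)\in\overline{\mathcal{M}}$ by the second step. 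Combining these with the inclusion from the first step yields $h+\gamma(h,x)\in B_\epsilon(h_0)\cap\overline{\mathcal{M}}\subset\overline{B_\epsilon(h_0)}\cap\mathcal{M}$; since $F(N)=0$, this is precisely (\ref{jumps-eps-delta}).

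The main obstacle is the quantifier interchange at the second step: the hypothesis only provides a different exceptional null set of $x$'s for each $h$, whereas the $\epsilon$--$\delta$--jump condition requires a single null set working uniformly for $h$ in a whole neighbourhood of $h_0$. This is exactly what Lemma~\ref{lemma-jumps-exact} is designed for, and it rests on continuity of $\gamma$ in the spatial variable together with separability of $H$. Everything else is then a routine Chebyshev truncation against the $L^2$ integrability of $\rho_n$ from (\ref{rho-integrable}), together with the local compactness and inclusion statements of Lemma~\ref{lemma-mf-closed-set}.
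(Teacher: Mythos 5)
Your proof is correct and follows essentially the same route as the paper: Lemma \ref{lemma-mf-closed-set} supplies the compactness and the inclusion $B_{\epsilon}(h_0)\cap\overline{\mathcal{M}}\subset\overline{B_{\epsilon}(h_0)}\cap\mathcal{M}$, then a Chebyshev truncation (you take $B=\{\rho_n\le\epsilon/2\}$, the paper takes $\{\rho_n<\delta\}$ — both work) together with (\ref{linear-growth-rho}) keeps $h+\gamma(h,x)$ inside $B_{\epsilon}(h_0)$, and (\ref{jumps-on-O}) puts it in $\overline{\mathcal{M}}$, whence (\ref{jumps-eps-delta}) follows from the inclusion. The one deviation is your second step: under the paper's quantifier convention the phrase ``for $F$--almost all $x\in B$, for all $h\in B_{\delta}(h_0)\cap\mathcal{M}$'' in (\ref{jumps-eps-delta}) allows the exceptional $F$--nullset to depend on $h$ (compare the hypothesis (\ref{jumps-for-proof}) with the conclusion (\ref{jumps-provided}) of Lemma \ref{lemma-jumps-exact}), so no quantifier interchange is needed and the paper simply applies (\ref{jumps-on-O}) pointwise in $h$. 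Your invocation of Lemma \ref{lemma-jumps-exact} is therefore superfluous, though harmless and correctly justified (continuity in $h$ from (\ref{Lipschitz-gamma-rho}), separability of $H$, closedness of $\overline{\mathcal{M}}$); it just yields the slightly stronger statement with a single nullset valid for all $h$ in the neighborhood. A minor citation point: in the setting of Appendix~\ref{appendix-auxiliary} only (\ref{rho-square-int}) is assumed, which is all your Chebyshev estimate actually uses, so appealing to (\ref{rho-integrable}) is stronger than necessary.
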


\begin{proof}
By Lemma~\ref{lemma-mf-closed-set} there exists $\epsilon_0 > 0$ such that for every $0 < \epsilon \leq \epsilon_0$ the set $\overline{B_{\epsilon}(h_0)} \cap \mathcal{M}$ is compact and we have $B_{\epsilon}(h_0) \cap \overline{\mathcal{M}} \subset \overline{B_{\epsilon}(h_0)} \cap \mathcal{M}$. Let $0 < \epsilon \leq \epsilon_0$ be arbitrary. There exists $0 < \delta < \epsilon / 2$ such that $B_{\delta}(h_0) \subset U$. Moreover, there is $n \in \mathbb{N}$ such that $\| h \| \leq n$ for all $h \in B_{\delta}(h_0) \cap \mathcal{M}$. Setting $B := \{ \rho_n < \delta \} \in \mathcal{E}$, by (\ref{rho-square-int}) and Chebyshev's inequality we obtain
\begin{align*}
F(B^c) \leq \frac{1}{\delta^2} \int_E \rho_n(x)^2 F(dx) < \infty.
\end{align*}
Let $h \in B_{\delta}(h_0) \cap \mathcal{M}$ be arbitrary. By (\ref{linear-growth-rho}) we have
\begin{align*}
\| \gamma(h,x) \| \leq \rho_n(x) < \delta \quad \text{for all $x \in B$.}
\end{align*}
Taking into account (\ref{jumps-on-O}), we deduce
\begin{align*}
h + \gamma(h,x) \in B_{\epsilon}(h_0) \cap \overline{\mathcal{M}} \subset \overline{B_{\epsilon}(h_0)} \cap \mathcal{M} \quad \text{for $F$-almost all $x \in E$,}
\end{align*}
showing that $\gamma$ satisfies the $\epsilon$-$\delta$-jump condition in $h_0$.
\end{proof}

\begin{lemma}\label{lemma-Taylor-para}
Let $h_0 \in \mathcal{M}$ be such that $\gamma$ satisfies the $\epsilon$-$\delta$-jump condition in $h_0$. Let $\phi : V \subset \mathbb{R}_+^m \rightarrow U \cap \mathcal{M}$ be a parametrization around $h_0$ such that $\phi$ and $\Phi := \phi^{-1}$ have extensions $\phi \in C_b^2(\mathbb{R}^m;H)$ and $\Phi \in C_b^1(H; \mathbb{R}^m)$.
Then there exist $\delta > 0$, a set $B \in \mathcal{E}$ with $F(B^c) < \infty$ and a measurable mapping $\rho : E \rightarrow \mathbb{R}_+$ satisfying $\int_E \rho(x)^2 F(dx) < \infty$ such that
\begin{equation}\label{gamma-Taylor}
\begin{aligned}
\| \gamma(h,x) - D\phi(y)(\Phi(h + \gamma(h,x)) - \Phi(h)) \| \leq \rho(x)^2 \quad &\text{for $F$-almost all $x \in B$,}
\\ &\text{for all $h \in B_{\delta}(h_0) \cap \mathcal{M}$,}
\end{aligned}
\end{equation}
where $y = \phi^{-1}(h)$.
\end{lemma}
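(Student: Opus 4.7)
The plan is to exploit the fact that the $\epsilon$--$\delta$--jump condition forces both $h$ and $h+\gamma(h,x)$ to lie on the manifold (inside a controlled neighborhood), so that we may legitimately compose with the chart map $\Phi$ and view $\gamma(h,x)$ as the \emph{exact} increment $\phi(y+\Delta)-\phi(y)$ along the parametrization, where $\Delta:=\Phi(h+\gamma(h,x))-\Phi(h)$. Once this observation is in place, the inequality is nothing more than a second-order Taylor expansion of $\phi$ combined with the Lipschitz bound on $\Phi$ coming from $\Phi\in C_b^1(H;\mathbb{R}^m)$.

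First I would fix the neighborhoods. Let $\epsilon_0>0$ be the constant supplied by the $\epsilon$--$\delta$--jump condition in $h_0$. Shrink it, if necessary, so that $\overline{B_{\epsilon_0}(h_0)}\subset U$ (possible because $U$ is an open neighborhood of $h_0$). Fix such an $\epsilon\leq\epsilon_0$ and let $0<\delta<\epsilon$ and $B\in\mathcal{E}$ with $F(B^c)<\infty$ be provided by the $\epsilon$--$\delta$--jump condition, so that \eqref{jumps-eps-delta} holds. In particular, for $h\in B_\delta(h_0)\cap\mathcal{M}$ and $F$--almost all $x\in B$, both $h$ and $h+\gamma(h,x)$ belong to $\overline{B_\epsilon(h_0)}\cap\mathcal{M}\subset U\cap\mathcal{M}$, hence $\phi\circ\Phi$ is the identity at both points. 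Choose finally $n\in\mathbb{N}$ such that $\|h\|\leq n$ for all $h\in B_\delta(h_0)$ and let $\rho_n$ be the mapping from \eqref{Lipschitz-gamma-rho}--\eqref{linear-growth-rho}.

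The core estimate then runs as follows. For $h$ and $x$ as above, set $y:=\Phi(h)\in V$ and $\Delta:=\Phi(h+\gamma(h,x))-y$. Because $\phi(y)=h$ and $\phi(y+\Delta)=h+\gamma(h,x)$, we obtain the exact identity $\gamma(h,x)=\phi(y+\Delta)-\phi(y)$. Applying the second-order Taylor remainder to $\phi\in C_b^2(\mathbb{R}^m;H)$ yields
\begin{equation*}
\bigl\|\gamma(h,x)-D\phi(y)\Delta\bigr\|\leq\tfrac12\|D^2\phi\|_\infty\,\|\Delta\|^2.
\end{equation*}
Since $\Phi\in C_b^1(H;\mathbb{R}^m)$, the mean value theorem gives $\|\Delta\|\leq\|D\Phi\|_\infty\,\|\gamma(h,x)\|\leq\|D\Phi\|_\infty\,\rho_n(x)$ by \eqref{linear-growth-rho}. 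Combining these two bounds,
\begin{equation*}
\bigl\|\gamma(h,x)-D\phi(y)\bigl(\Phi(h+\gamma(h,x))-\Phi(h)\bigr)\bigr\|\leq C\,\rho_n(x)^2,
\end{equation*}
with $C:=\tfrac12\|D^2\phi\|_\infty\,\|D\Phi\|_\infty^2$. Setting $\rho(x):=\sqrt{C}\,\rho_n(x)$ delivers \eqref{gamma-Taylor}, and $\int_E\rho(x)^2\,F(dx)=C\int_E\rho_n(x)^2\,F(dx)<\infty$ by \eqref{rho-square-int}.

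I do not anticipate a serious obstacle here: the main point to be careful about is that the manifold identification $\phi\circ\Phi=\mathrm{id}$ is available at \emph{both} endpoints $h$ and $h+\gamma(h,x)$, which is exactly what the $\epsilon$--$\delta$--jump condition was designed to provide, and why the lemma forces a preliminary shrinking of $\epsilon$ so that $\overline{B_\epsilon(h_0)}\cap\mathcal{M}\subset U\cap\mathcal{M}$. Beyond that, the estimate is a routine Taylor-with-Lipschitz bookkeeping and the integrability of $\rho^2$ is inherited directly from \eqref{rho-square-int}, without any need for the stronger condition \eqref{rho-integrable}.
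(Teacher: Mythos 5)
Your proposal is correct and follows essentially the same route as the paper's proof: use the $\epsilon$--$\delta$--jump condition (after shrinking $\epsilon$ so that $\overline{B_\epsilon(h_0)}\cap\mathcal{M}\subset U\cap\mathcal{M}$) to write $\gamma(h,x)=\phi(\Phi(h+\gamma(h,x)))-\phi(\Phi(h))$, then apply the second-order Taylor bound for $\phi\in C_b^2$, the Lipschitz bound $\|D\Phi\|_\infty$, and the growth bound \eqref{linear-growth-rho}, setting $\rho$ to be a constant multiple of $\rho_n$ with integrability from \eqref{rho-square-int}. No gaps.
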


\begin{proof}
Since $\gamma$ satisfies the $\epsilon$-$\delta$-jump condition in $h_0$, there exist $\delta > 0$ and a set $B \in \mathcal{E}$ with $F(B^c) < \infty$ such that
\begin{align*}
h + \gamma(h,x) \in U \cap \mathcal{M} \quad \text{for $F$-almost all $x \in B$,} \quad \text{for all $h \in B_{\delta}(h_0) \cap \mathcal{M}$.}
\end{align*}
Furthermore, there exists $n \in \mathbb{N}$ such that $\| h \| \leq n$ for all $h \in B_{\delta}(h_0) \cap \mathcal{M}$. Let $h \in B_{\delta}(h_0) \cap \mathcal{M}$ be arbitrary and set $y := \phi^{-1}(h)$. With $M := \| D^2 \phi \|_{\infty}$ and $N := \| D \Phi \|_{\infty}$, by Taylor's theorem and (\ref{linear-growth-rho}), for $F$-almost all $B \in \mathcal{E}$ we obtain
\begin{align*}
&\| \gamma(h,x) - D\phi(y)(\Phi(h + \gamma(h,x)) - \Phi(h)) \|
\\ &\leq \| \phi(\Phi(h + \gamma(h,x))) - \phi(\Phi(h)) - D\phi(y)(\Phi(h+\gamma(h,x)) - \Phi(h)) \|
\\ &\leq \frac{1}{2} M \| \Phi(h + \gamma(h,x)) - \Phi(h) \|^2
\leq \frac{1}{2} MN \| \gamma(h,x) \|^2 \leq \frac{1}{2} MN \rho_n(x)^2,
\end{align*}
proving (\ref{gamma-Taylor}).
\end{proof}

For a closed subspace $K \subset H$ we denote by $\Pi_K : H \rightarrow K$ the orthogonal projection on $K$, that is, for each $h \in H$ the vector $\Pi_K h$ is the unique element from $K$ such that
\begin{align*}
\| \Pi_K h - h \| = \inf_{g \in K} \| g - h \|.
\end{align*}

\begin{lemma}\label{lemma-proj-tang-cont}
Suppose that $\gamma$ satisfies the $\epsilon$-$\delta$-jump condition on $\mathcal{M}$. Then the following statements are true:
\begin{enumerate}
\item For each $h \in \mathcal{M}$ we have
\begin{align}\label{int-pi-fin}
\int_E \| \Pi_{(T_h \mathcal{M})^{\perp}} \gamma(h,x) \| F(dx) < \infty.
\end{align}

\item The mapping
\begin{align}\label{int-pi-cont}
\mathcal{M} \rightarrow H, \quad h \mapsto \int_E \Pi_{(T_h \mathcal{M})^{\perp}} \gamma(h,x) F(dx)
\end{align}
is continuous.
\end{enumerate}
\end{lemma}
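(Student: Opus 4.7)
The plan is to reduce both statements to a quadratic estimate on $\Pi_{(T_h \mathcal{M})^{\perp}} \gamma(h,x)$ coming from Lemma \ref{lemma-Taylor-para}, and then handle the large jumps separately using $F(B^c) < \infty$. The core observation is that for any parametrization $\phi : V \to U \cap \mathcal{M}$ around $h_0 = \phi(y)$ with the regularity from Lemma \ref{lemma-MN}, every vector $D\phi(y) w$ lies in $T_h \mathcal{M}$; in particular, $D\phi(y)(\Phi(h + \gamma(h,x)) - \Phi(h)) \in T_h \mathcal{M}$. Since $\Pi_{(T_h\mathcal{M})^{\perp}}$ kills this tangential contribution, Lemma \ref{lemma-Taylor-para} yields the pointwise bound
\begin{equation*}
\| \Pi_{(T_h \mathcal{M})^{\perp}} \gamma(h,x) \| = \| \Pi_{(T_h \mathcal{M})^{\perp}} \big( \gamma(h,x) - D\phi(y)(\Phi(h + \gamma(h,x)) - \Phi(h)) \big) \| \leq \rho(x)^2
\end{equation*}
for $F$--almost all $x \in B$ and all $h$ in some $B_{\delta}(h_0) \cap \mathcal{M}$, where $\int_E \rho(x)^2 F(dx) < \infty$.

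For statement (1), fix $h \in \mathcal{M}$ and apply the above locally at $h_0 = h$ to get a set $B \in \mathcal{E}$ with $F(B^c) < \infty$. On $B$ the Taylor estimate gives the quadratic bound above. On $B^c$ we use the standing assumption (\ref{linear-growth-rho}) together with $\| \Pi_{(T_h\mathcal{M})^{\perp}} \| \leq 1$ to obtain $\| \Pi_{(T_h \mathcal{M})^{\perp}} \gamma(h,x) \| \leq \rho_n(x)$ for some $n \in \mathbb{N}$ with $\| h \| \leq n$. Cauchy--Schwarz and (\ref{rho-square-int}) then give $\int_{B^c} \rho_n(x) F(dx) \leq F(B^c)^{1/2} \bigl( \int_E \rho_n(x)^2 F(dx) \bigr)^{1/2} < \infty$, establishing (\ref{int-pi-fin}).

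For statement (2), I would argue by Lebesgue's dominated convergence on a neighborhood $B_{\delta}(h_0) \cap \mathcal{M}$ of an arbitrary $h_0 \in \mathcal{M}$. The bounds from the previous paragraph hold uniformly for $h \in B_{\delta}(h_0) \cap \mathcal{M}$ (since both the $\epsilon$--$\delta$--jump condition and the choice of $n$ with $\|h\| \leq n$ are uniform there), producing an $F$--integrable dominating function $\rho(x)^2 \mathbbm{1}_B(x) + \rho_n(x) \mathbbm{1}_{B^c}(x)$ independent of $h$. Pointwise continuity of the integrand $h \mapsto \Pi_{(T_h \mathcal{M})^{\perp}} \gamma(h,x)$ follows from local Lipschitz continuity of $\gamma(\bullet, x)$ combined with continuity of the orthogonal projection $h \mapsto \Pi_{(T_h \mathcal{M})^{\perp}}$ in operator norm, which holds because $T_h \mathcal{M} = D\phi(\Phi(h)) \mathbb{R}^m$ depends continuously on $h$ via the $C^3$-parametrization (this is the content of the auxiliary Corollary \ref{cor-proj-tang-cont} invoked elsewhere).

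The main obstacle is ensuring that everything --- the choice of chart, the set $B$, the dominating function $\rho(x)^2$, and the integer $n$ bounding $\|h\|$ --- can be arranged uniformly on a fixed neighborhood of $h_0$; once this is done the argument is essentially just dominated convergence. The uniformity on the $B$-part is automatic from Lemma \ref{lemma-Taylor-para}, and on the $B^c$-part it is automatic from the standing local Lipschitz assumptions, so the entire proof is a careful orchestration of the lemmas already at hand rather than any new analytic input.
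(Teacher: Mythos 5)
Your proof is correct and follows essentially the same route as the paper: a chart from Lemma \ref{lemma-MN}, the quadratic, $h$-uniform bound on the small-jump set $B$ from Lemma \ref{lemma-Taylor-para} (you obtain it from the contraction property of $\Pi_{(T_h \mathcal{M})^{\perp}}$, the paper from the nearest-point property of the projection --- the same estimate), a Cauchy--Schwarz bound on $B^c$ (which is exactly the content of Lemma \ref{lemma-gamma-Bc-cont}), and dominated convergence together with pointwise continuity of $h \mapsto \Pi_{(T_h \mathcal{M})^{\perp}} \gamma(h,x)$. One caveat: do not cite Corollary \ref{cor-proj-tang-cont} for the continuity of the projections --- that corollary is itself deduced from the present lemma, so the reference would be circular; your direct justification, that $T_h \mathcal{M} = D\phi(\Phi(h))\mathbb{R}^m$ varies continuously with $h$ through the $C^3$-chart, is the right argument and suffices.
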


\begin{proof}
Let $h_0 \in \mathcal{M}$ be arbitrary. By Proposition~\ref{prop-MN} there exists a parametrization $\phi : V \subset \mathbb{R}_+^m \rightarrow U \cap \mathcal{M}$ around $h_0$ such that $\phi$ and $\Phi := \phi^{-1}$ have extensions $\phi \in C_b^2(\mathbb{R}^m;H)$ and $\Phi \in C_b^1(H; \mathbb{R}^m)$. According to Lemma~\ref{lemma-Taylor-para} there exist $\delta > 0$, a set $B \in \mathcal{E}$ with $F(B^c) < \infty$ and a measurable mapping $\rho : E \rightarrow \mathbb{R}_+$ satisfying $\int_E \rho(x)^2 F(dx) < \infty$ such that (\ref{gamma-Taylor}) is satisfied. Let $h \in B_{\delta}(h_0) \cap \mathcal{M}$ be arbitrary. Then, for $F$-almost all $x \in B$ we obtain
\begin{align*}
&\| \Pi_{(T_h \mathcal{M})^{\perp}} \gamma(h,x) \| = \| \gamma(h,x) - \Pi_{T_h \mathcal{M}}\gamma(h,x) \|
\\ &\leq \| \gamma(h,x) - D\phi(y)(\Phi(h + \gamma(h,x)) - \Phi(h)) \| \leq \rho(x)^2.
\end{align*}
Moreover, by (\ref{Lipschitz-gamma-rho}), for each $x \in E$ the mapping
\begin{align*}
H \rightarrow H, \quad h \mapsto \Pi_{(T_h \mathcal{M})^{\perp}} \gamma(h,x)
\end{align*}
is continuous. Thus, by Lebesgue's dominated convergence theorem and Lemma~\ref{lemma-gamma-Bc-cont} we deduce (\ref{int-pi-fin}) and the continuity of the mapping (\ref{int-pi-cont}).
\end{proof}

\begin{lemma}\label{lemma-eta-int-fin}
Suppose that $\gamma$ satisfies the $\epsilon$-$\delta$-jump condition on $\mathcal{M}$ and
let $\phi : V \subset \mathbb{R}_+^m \rightarrow U \cap \mathcal{M}$ be a parametrization such that $\phi$ and $\Phi := \phi^{-1}$ have extensions $\phi \in C_b^2(\mathbb{R}^m; H)$ and $\Phi \in C_b^1(H; \mathbb{R}^m)$. Then the following statements are equivalent:
\begin{enumerate}
\item We have
\begin{align*}
\int_E |\langle \eta_h,\gamma(h,x) \rangle| F(dx) < \infty, \quad h \in U \cap \partial \mathcal{M}.
\end{align*}

\item We have
\begin{align*}
\int_E |\langle \eta_h,D\phi(y)(\Phi(h + \gamma(h,x)) - \Phi(h)) \rangle| F(dx) < \infty, \quad h \in U \cap \partial \mathcal{M}
\end{align*}
where $y = \phi^{-1}(h)$.
\end{enumerate}
\end{lemma}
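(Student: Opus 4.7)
The plan is to reduce the equivalence to Lemma \ref{lemma-Taylor-para}, which already controls the deviation between $\gamma(h,x)$ and its ``pulled-back'' counterpart $D\phi(y)(\Phi(h+\gamma(h,x))-\Phi(h))$ by a square-integrable function on a cofinite set. Concretely, I fix an arbitrary $h \in U \cap \partial \mathcal{M}$ and set $y := \phi^{-1}(h)$. Apply Lemma \ref{lemma-Taylor-para} with $h_0 := h$ (the parametrization at hand has the required $C_b^2$ and $C_b^1$ extensions): this produces $\delta > 0$, a set $B \in \mathcal{E}$ with $F(B^c) < \infty$, and a measurable $\rho : E \to \mathbb{R}_+$ with $\int_E \rho(x)^2 F(dx) < \infty$, such that
\begin{equation*}
\| \gamma(h,x) - D\phi(y)\bigl( \Phi(h+\gamma(h,x)) - \Phi(h) \bigr) \| \leq \rho(x)^2 \quad \text{for $F$--a.a.\ $x \in B$.}
\end{equation*}

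Using $\|\eta_h\| = 1$ together with the reverse triangle inequality $|\,|a|-|b|\,| \leq |a-b|$, the ``small jump'' part satisfies
\begin{equation*}
\int_{B} \bigl| \, |\langle \eta_h,\gamma(h,x) \rangle| - |\langle \eta_h, D\phi(y)(\Phi(h+\gamma(h,x)) - \Phi(h)) \rangle| \, \bigr| F(dx) \leq \int_{B} \rho(x)^2 F(dx) < \infty.
\end{equation*}
For the ``large jump'' part, Lemma \ref{lemma-gamma-Bc-cont} yields $\int_{B^c} \|\gamma(h,x)\| F(dx) < \infty$, so in particular $\int_{B^c} |\langle \eta_h, \gamma(h,x)\rangle| F(dx) < \infty$; moreover, by boundedness of $D\phi$ and Lipschitz continuity of $\Phi$ (with constant $\|D\Phi\|_\infty$),
\begin{equation*}
\int_{B^c} \bigl| \langle \eta_h, D\phi(y)(\Phi(h+\gamma(h,x))-\Phi(h)) \rangle \bigr| F(dx) \leq \|D\phi\|_\infty \|D\Phi\|_\infty \int_{B^c} \|\gamma(h,x)\| F(dx) < \infty.
\end{equation*}

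Combining the estimates over $B$ and $B^c$, the difference
\begin{equation*}
\int_E |\langle \eta_h,\gamma(h,x)\rangle| F(dx) - \int_E |\langle \eta_h, D\phi(y)(\Phi(h+\gamma(h,x)) - \Phi(h))\rangle| F(dx)
\end{equation*}
is a well-defined real number (modulo the convention that both sides may be $+\infty$), and more precisely the two integrals differ by a finite quantity. Hence one is finite iff the other is, which is exactly the claimed equivalence. There is no genuine obstacle here; the only point that requires care is that Lemma \ref{lemma-Taylor-para} is applied with the free base point $h_0 := h$ (permissible because the hypothesis on the parametrization is a global regularity condition independent of $h_0$), so that the Taylor bound is available at the specific boundary point under consideration.
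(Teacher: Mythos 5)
Your proposal is correct and follows essentially the same route as the paper: apply Lemma \ref{lemma-Taylor-para} at the base point $h$ to control the difference of the two integrands on a set $B$ with $F(B^c)<\infty$ by the square-integrable bound $\rho(x)^2$, and handle $B^c$ via Lemma \ref{lemma-gamma-Bc-cont} together with the bounds $\|D\phi\|_\infty$, $\|D\Phi\|_\infty$. The only cosmetic difference is that you invoke the reverse triangle inequality explicitly, whereas the paper simply notes that the integral of $|\langle \eta_h, \gamma(h,x) - D\phi(y)(\Phi(h+\gamma(h,x))-\Phi(h))\rangle|$ over $B$ is finite; both yield the same conclusion.
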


\begin{proof}
Let $h \in U \cap \partial \mathcal{M}$ be arbitrary and set $y := \phi^{-1}(h)$. By Lemma~\ref{lemma-Taylor-para} there exists set $B \in \mathcal{E}$ with $F(B^c) < \infty$ such that
\begin{align*}
\int_B | \langle \eta_h, \gamma(h,x) - D\phi(y)(\Phi(h + \gamma(h,x)) - \Phi(h)) \rangle | F(dx) < \infty.
\end{align*}
Setting $M := \| D \phi \|_{\infty}$ and $N := \| D \Phi \|_{\infty}$, by using Lemma~\ref{lemma-gamma-Bc-cont} we have
\begin{align*}
\int_{B^c} |\langle \eta_h,\gamma(h,x) \rangle| F(dx) \leq \| \eta_h \| \int_{B^c} \| \gamma(h,x) \| F(dx) < \infty
\end{align*}
as well as
\begin{align*}
&\int_{B^c} |\langle \eta_h,D\phi(y)(\Phi(h + \gamma(h,x)) - \Phi(h)) \rangle| F(dx)
\\ &\leq \| \eta_h \| MN \int_{B^c} \| \gamma(h,x) \| F(dx) < \infty.
\end{align*}
Therefore, the claimed equivalence follows.
\end{proof}

Let $\beta : H \rightarrow H$ and $\gamma : H \times E \rightarrow H$ be mappings such that conditions (\ref{Lipschitz-gamma-rho}), (\ref{linear-growth-rho}) are fulfilled with the mappings $\rho_n : E \rightarrow \mathbb{R}_+$, $n \in \mathbb{N}$ satisfying (\ref{rho-square-int}). Let $B \in \mathcal{E}$ be a set with $F(B^c) < \infty$ and define the mappings $\beta^B : H \rightarrow H$ and $\gamma^B : H \times E \rightarrow H$ as
\begin{align*}
\beta^{B}(h) &:= \beta(h) - \int_{B^c} \gamma(h,x) F(dx),
\\ \gamma^{B}(h,x) &:= \gamma(h,x) \mathbbm{1}_{B}(x).
\end{align*}
Note that $\beta^B$ is well-defined according to Lemma~\ref{lemma-gamma-Bc-cont}.

\begin{proposition}\label{prop-vf-vf-B}
Suppose that $\gamma$ satisfies the $\epsilon$-$\delta$-jump condition on $\mathcal{M}$. Then the following statements are true:
\begin{enumerate}
\item We have
\begin{align}\label{manifold-bound-B}
&\int_E |\langle \eta_{h}, \gamma(h,x) \rangle| F(dx), \quad h \in \partial \mathcal{M}
\\ \label{beta-cont-B} &\beta(h) - \int_E \Pi_{(T_h \mathcal{M})^{\perp}} \gamma(h,x) F(dx) \in T_h
\mathcal{M}, \quad h \in \mathcal{M}
\\ \label{beta-geq-0-B} &\langle \eta_h,\beta(h) \rangle - \int_{E} \langle \eta_h,\gamma(h,x) \rangle F(dx) \geq 0, \quad h \in \partial \mathcal{M}
\end{align}
if and only if
\begin{align}\label{manifold-bound-C}
&\int_E |\langle \eta_{h}, \gamma^B(h,x) \rangle| F(dx), \quad h \in \partial \mathcal{M}
\\ \label{beta-cont-C}
&\beta^B(h) - \int_E \Pi_{(T_h \mathcal{M})^{\perp}} \gamma^B(h,x) F(dx) \in T_h
\mathcal{M}, \quad h \in \mathcal{M}
\\ \label{beta-geq-0-C} &\langle \eta_h,\beta^B(h) \rangle - \int_{E} \langle \eta_h,\gamma^B(h,x) \rangle F(dx) \geq 0, \quad h \in \partial \mathcal{M}.
\end{align}
\item The mapping in (\ref{beta-cont-B}) is continuous on $\mathcal{M}$ if and only if the mapping in (\ref{beta-cont-C}) is continuous on $\mathcal{M}$.
\end{enumerate}
\end{proposition}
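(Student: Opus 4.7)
The plan is to base both parts of the proposition on the orthogonal decomposition $\gamma(h,x)=\Pi_{T_h\mathcal{M}}\gamma(h,x)+\Pi_{(T_h\mathcal{M})^{\perp}}\gamma(h,x)$ together with the additive splitting $\int_E=\int_B+\int_{B^c}$. Lemma \ref{lemma-gamma-Bc-cont} guarantees that $\int_{B^c}\|\gamma(h,x)\|F(dx)<\infty$ for every $h\in H$, hence also $\int_{B^c}\|\Pi_{(T_h\mathcal{M})^{\perp}}\gamma(h,x)\|F(dx)<\infty$, while Lemma \ref{lemma-proj-tang-cont}, applicable thanks to the $\epsilon$--$\delta$--jump hypothesis, secures the analogous integrability on all of $E$. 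Thus every piece of the split integrals is well defined and may be recombined without convergence worries.

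For the three equivalences in (1) I would handle (\ref{manifold-bound-B})$\Leftrightarrow$(\ref{manifold-bound-C}) first, by noting that
\[
\int_E|\langle\eta_h,\gamma(h,x)\rangle|F(dx)=\int_E|\langle\eta_h,\gamma^B(h,x)\rangle|F(dx)+\int_{B^c}|\langle\eta_h,\gamma(h,x)\rangle|F(dx),
\]
where the last summand is dominated by $\|\eta_h\|\int_{B^c}\|\gamma(h,x)\|F(dx)<\infty$. Next, (\ref{beta-geq-0-B})$\Leftrightarrow$(\ref{beta-geq-0-C}) in fact holds as an equality: pulling $\langle\eta_h,\cdot\rangle$ inside the $B^c$-integral (justified by the integrability just noted) and invoking the definition of $\beta^B$ gives
\[
\langle\eta_h,\beta(h)\rangle-\int_E\langle\eta_h,\gamma(h,x)\rangle F(dx)=\langle\eta_h,\beta^B(h)\rangle-\int_E\langle\eta_h,\gamma^B(h,x)\rangle F(dx).
\]
Finally, for (\ref{beta-cont-B})$\Leftrightarrow$(\ref{beta-cont-C}) a parallel computation using the orthogonal decomposition of $\gamma$ on $B^c$ yields
\[
\beta(h)-\int_E\Pi_{(T_h\mathcal{M})^{\perp}}\gamma(h,x)F(dx)=\beta^B(h)-\int_E\Pi_{(T_h\mathcal{M})^{\perp}}\gamma^B(h,x)F(dx)+\int_{B^c}\Pi_{T_h\mathcal{M}}\gamma(h,x)F(dx).
\]
Since $T_h\mathcal{M}$ is a closed linear subspace (in particular a closed convex cone), Lemma \ref{lemma-int-in-convex} places the correction term $\int_{B^c}\Pi_{T_h\mathcal{M}}\gamma(h,x)F(dx)$ inside $T_h\mathcal{M}$, so the left-hand side is tangent iff the $\beta^B$-expression is, as required.

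Part (2) then follows from the same identity: the two mappings differ exactly by $h\mapsto\int_{B^c}\Pi_{T_h\mathcal{M}}\gamma(h,x)F(dx)$, so they are simultaneously continuous iff this correction is continuous on $\mathcal{M}$. Continuity of the correction I would establish by dominated convergence, noting that $h\mapsto\Pi_{T_h\mathcal{M}}\gamma(h,x)$ is continuous for each $x$ (continuity of $h\mapsto\Pi_{T_h\mathcal{M}}$ on a $C^3$-submanifold is the standard fact used elsewhere in the paper, e.g.\ in Proposition \ref{prop-invariance-partition}), while a uniform local bound is provided by $\|\Pi_{T_h\mathcal{M}}\gamma(h,x)\|\le\|\gamma(h,x)\|\le\rho_n(x)$, with $\int_{B^c}\rho_n(x)F(dx)<\infty$ by Cauchy--Schwarz from $F(B^c)<\infty$ and $\rho_n\in L^2(F)$. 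The main obstacle is really only the careful bookkeeping of integrabilities for the split integrals, all of which is supplied by Lemmas \ref{lemma-gamma-Bc-cont} and \ref{lemma-proj-tang-cont}; no genuinely new estimate is needed.
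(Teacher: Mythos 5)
Your proposal is correct and follows essentially the same route as the paper: split each integral over $B$ and $B^c$, use Lemma \ref{lemma-gamma-Bc-cont} for the $B^c$-integrability, observe that the boundary inequalities are in fact equal and that the two drift mappings differ exactly by the tangent correction $\int_{B^c}\Pi_{T_h\mathcal{M}}\gamma(h,x)\,F(dx)$, which settles both the tangency and the continuity equivalences (the paper carries this out in Lemmas \ref{lemma-vf-vf-1}--\ref{lemma-vf-vf-3}). Your extra invocations of Lemma \ref{lemma-int-in-convex} and dominated convergence are harmless elaborations of steps the paper treats more tersely.
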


\begin{proof}
This is a consequence of Lemmas~\ref{lemma-vf-vf-1}--\ref{lemma-vf-vf-3} below.
\end{proof}

\begin{lemma}\label{lemma-vf-vf-1}
Conditions (\ref{manifold-bound-B}) and (\ref{manifold-bound-C}) are equivalent.
\end{lemma}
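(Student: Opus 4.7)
The plan is to observe that (\ref{manifold-bound-B}) and (\ref{manifold-bound-C}) differ only by an integral over $B^c$, which is automatically finite by Lemma~\ref{lemma-gamma-Bc-cont}. Since $F(B^c) < \infty$, the hypothesis $\epsilon$--$\delta$--jump condition on $\mathcal{M}$ is not even needed for this particular equivalence; it suffices to use the standing regularity assumptions on $\gamma$.

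Concretely, fix $h \in \partial \mathcal{M}$. The decomposition $\gamma(h,x) = \gamma^B(h,x) + \gamma(h,x)\mathbbm{1}_{B^c}(x)$ and the triangle inequality give
\begin{align*}
\bigg| \int_E |\langle \eta_h,\gamma(h,x)\rangle| F(dx) - \int_E |\langle \eta_h,\gamma^B(h,x)\rangle| F(dx) \bigg|
\leq \int_{B^c} |\langle \eta_h,\gamma(h,x)\rangle| F(dx).
\end{align*}
Using Cauchy--Schwarz and $\|\eta_h\| = 1$, the right-hand side is bounded by $\int_{B^c} \|\gamma(h,x)\| F(dx)$, which is finite by (the proof of) Lemma~\ref{lemma-gamma-Bc-cont}: indeed, for $n \in \mathbb{N}$ with $\|h\| \leq n$, the Cauchy--Schwarz inequality together with (\ref{linear-growth-rho}) and (\ref{rho-square-int}) yields
\begin{align*}
\int_{B^c} \|\gamma(h,x)\| F(dx) \leq F(B^c)^{1/2} \bigg( \int_E \rho_n(x)^2 F(dx) \bigg)^{1/2} < \infty.
\end{align*}

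Hence the two integrals in (\ref{manifold-bound-B}) and (\ref{manifold-bound-C}) differ by a finite, real-valued quantity, so one is finite if and only if the other is. This establishes the equivalence for every $h \in \partial \mathcal{M}$, finishing the proof. There is no genuine obstacle here; the result is essentially bookkeeping, and the only point to keep in mind is that the square-integrability condition (\ref{rho-square-int}) already suffices for the $L^1$-integrability on $B^c$ via Cauchy--Schwarz (no finite-variation assumption on the jumps is required).
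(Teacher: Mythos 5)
Your proof is correct and follows essentially the same route as the paper: split the integral over $B$ and $B^c$, identify the $B$-part with the $\gamma^B$-integral, and invoke Lemma \ref{lemma-gamma-Bc-cont} (via Cauchy--Schwarz and $\| \eta_h \| = 1$) to see that the $B^c$-part is finite, so neither the jump condition nor anything beyond the standing assumptions is needed. The only cosmetic point is to phrase the comparison additively, $\int_E |\langle \eta_h, \gamma(h,x) \rangle| F(dx) = \int_E |\langle \eta_h, \gamma^B(h,x) \rangle| F(dx) + \int_{B^c} |\langle \eta_h, \gamma(h,x) \rangle| F(dx)$ as an identity in $[0,\infty]$, rather than as a difference of two integrals that could a priori both be infinite.
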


\begin{proof}
Let $h \in \partial \mathcal{M}$ be arbitrary. Then we have
\begin{align*}
\int_E |\langle \eta_{h}, \gamma(h,x) \rangle| F(dx) &= \int_{B^c} |\langle \eta_{h}, \gamma(h,x) \rangle| F(dx) + \int_{B} |\langle \eta_{h}, \gamma(h,x) \rangle| F(dx)
\\ &= \int_{B^c} |\langle \eta_{h}, \gamma(h,x) \rangle| F(dx) + \int_{E} |\langle \eta_{h}, \gamma^B(h,x) \rangle| F(dx).
\end{align*}
Taking into account Lemma~\ref{lemma-gamma-Bc-cont}, the claimed equivalence (\ref{manifold-bound-B}) $\Leftrightarrow$ (\ref{manifold-bound-C}) follows.
\end{proof}

\begin{lemma}\label{lemma-vf-vf-2}
Suppose that $\gamma$ satisfies the $\epsilon$-$\delta$-jump condition on $\mathcal{M}$. Then the following statements are true:
\begin{enumerate}
\item Conditions (\ref{beta-cont-B}) and (\ref{beta-cont-C}) are equivalent.

\item The mapping in (\ref{beta-cont-B}) is continuous on $\mathcal{M}$ if and only if the mapping in (\ref{beta-cont-C}) is continuous on $\mathcal{M}$.
\end{enumerate}
\end{lemma}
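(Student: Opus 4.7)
The strategy is to compute the difference of the two vectors appearing in (\ref{beta-cont-B}) and (\ref{beta-cont-C}) and show that this difference is an element of $T_h\mathcal{M}$ which depends continuously on $h$. Once established, both claims of the lemma follow immediately, since two vectors differ by an element of $T_h\mathcal{M}$ iff one lies in $T_h\mathcal{M}$ iff the other does, and adding a continuous $T_h\mathcal{M}$-valued map does not affect continuity on $\mathcal{M}$.

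The computation proceeds as follows. By definition $\beta(h) - \beta^B(h) = \int_{B^c}\gamma(h,x)F(dx)$ and $\gamma(h,x) - \gamma^B(h,x) = \gamma(h,x)\mathbbm{1}_{B^c}(x)$. Under the standing assumption $F(B^c)<\infty$, Lemma \ref{lemma-gamma-Bc-cont} and Cauchy--Schwarz give $\int_{B^c}\|\gamma(h,x)\|F(dx)<\infty$, so all integrals considered are absolutely convergent. Subtracting the two expressions,
\begin{align*}
&\Big(\beta(h)-\int_E\Pi_{(T_h\mathcal{M})^{\perp}}\gamma(h,x)F(dx)\Big) - \Big(\beta^B(h)-\int_E\Pi_{(T_h\mathcal{M})^{\perp}}\gamma^B(h,x)F(dx)\Big) \\
&= \int_{B^c}\gamma(h,x)F(dx) - \int_{B^c}\Pi_{(T_h\mathcal{M})^{\perp}}\gamma(h,x)F(dx) = \int_{B^c}\Pi_{T_h\mathcal{M}}\gamma(h,x)F(dx).
\end{align*}
For each fixed $x$ the integrand lies in the closed subspace $T_h\mathcal{M}$, hence so does the Bochner integral (using Lemma \ref{lemma-int-in-convex} applied to the closed convex cone $T_h\mathcal{M}$). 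This proves that (\ref{beta-cont-B}) $\Leftrightarrow$ (\ref{beta-cont-C}), which is the first assertion. Note also that well-definedness of the integrals in (\ref{beta-cont-B}) and (\ref{beta-cont-C}) themselves is guaranteed by Lemma \ref{lemma-proj-tang-cont}, which is precisely where the $\epsilon$--$\delta$--jump hypothesis enters.

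For the continuity statement it remains to show that $h\mapsto \int_{B^c}\Pi_{T_h\mathcal{M}}\gamma(h,x)F(dx)$ is continuous on $\mathcal{M}$. Let $(h_k)\subset\mathcal{M}$ converge to $h\in\mathcal{M}$; then $\|h_k\|\leq n$ for some $n\in\mathbb{N}$ eventually. By Corollary \ref{cor-proj-tang-cont} (continuity of the orthogonal projection onto the tangent space) together with continuity of $\gamma(\bullet,x)$ from (\ref{Lipschitz-gamma-rho}), the integrand converges pointwise in $x$. The bound $\|\Pi_{T_{h_k}\mathcal{M}}\gamma(h_k,x)\|\leq \|\gamma(h_k,x)\|\leq \rho_n(x)$ from (\ref{linear-growth-rho}), combined with $\int_{B^c}\rho_n(x)F(dx)\leq F(B^c)^{1/2}(\int_E\rho_n(x)^2F(dx))^{1/2}<\infty$ (by Cauchy--Schwarz and (\ref{rho-square-int})), allows us to invoke dominated convergence. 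Thus the difference computed above is continuous on $\mathcal{M}$, and the second assertion follows.

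The only non-routine ingredient is the continuity of $h\mapsto \Pi_{T_h\mathcal{M}}$, which is handled by the auxiliary Corollary \ref{cor-proj-tang-cont}; everything else is a direct decomposition plus dominated convergence, so no serious obstacle is expected.
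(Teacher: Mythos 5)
Your proof is correct and takes essentially the same route as the paper: both identify the difference of the two vectors as $\int_{B^c}\Pi_{T_h\mathcal{M}}\gamma(h,x)\,F(dx)=\Pi_{T_h\mathcal{M}}\int_{B^c}\gamma(h,x)\,F(dx)\in T_h\mathcal{M}$ and deduce from this both the equivalence and the continuity transfer, with Lemma \ref{lemma-gamma-Bc-cont} supplying integrability and continuity of the $B^c$-part. Only a minor citation slip: Corollary \ref{cor-proj-tang-cont} asserts continuity of $h\mapsto\int_E\Pi_{(T_h\mathcal{M})^{\perp}}\gamma(h,x)\,F(dx)$, not of the projection itself; the pointwise continuity of $h\mapsto\Pi_{T_h\mathcal{M}}\gamma(h,x)$ that you invoke is the same implicit fact about continuous dependence of the tangent spaces along the $C^3$-submanifold that the paper itself uses in the proof of Lemma \ref{lemma-proj-tang-cont}.
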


\begin{proof}
Let $h \in \mathcal{M}$ be arbitrary. The calculation
\begin{align*}
&\beta^B(h) - \int_{E} \Pi_{(T_h \mathcal{M})^{\perp}} \gamma^B(h,x) F(dx)
\\ &= \beta(h) - \int_{B^c} \gamma(h,x)F(dx) - \int_{B} \Pi_{(T_h \mathcal{M})^{\perp}} \gamma(h,x) F(dx)
\\ &= \beta(h) - \int_{E} \Pi_{(T_h \mathcal{M})^{\perp}} \gamma(h,x) F(dx) - \int_{B^c} \gamma(h,x)F(dx)
\\ &\quad - \int_{B} \Pi_{(T_h \mathcal{M})^{\perp}} \gamma(h,x) F(dx) + \int_{E} \Pi_{(T_h \mathcal{M})^{\perp}} \gamma(h,x) F(dx)
\\ &= \beta(h) - \int_{E} \Pi_{(T_h \mathcal{M})^{\perp}} \gamma(h,x) F(dx) - \Pi_{T_h \mathcal{M}} \int_{B^c} \gamma(h,x) F(dx),
\end{align*}
together with Lemma~\ref{lemma-gamma-Bc-cont}, proves the claimed equivalences.
\end{proof}

\begin{lemma}\label{lemma-vf-vf-3}
Suppose that (\ref{manifold-bound-B}) is satisfied. Then conditions (\ref{beta-geq-0-B}) and (\ref{beta-geq-0-C}) are equivalent.
\end{lemma}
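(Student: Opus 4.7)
The plan is to show the two conditions are in fact pointwise equal expressions in $h \in \partial \mathcal{M}$, and therefore trivially equivalent once one verifies that all integrals involved are absolutely convergent, which is precisely what hypothesis \eqref{manifold-bound-B} supplies.

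First I would fix $h \in \partial \mathcal{M}$ and expand $\beta^B$ according to its definition. The quantity appearing in \eqref{beta-geq-0-C} becomes
\begin{align*}
\langle \eta_h, \beta^B(h) \rangle - \int_E \langle \eta_h, \gamma^B(h,x) \rangle F(dx)
= \langle \eta_h, \beta(h) \rangle - \int_{B^c} \langle \eta_h, \gamma(h,x) \rangle F(dx) - \int_B \langle \eta_h, \gamma(h,x) \rangle F(dx).
\end{align*}
The first integral on the right exists as a Bochner integral by Lemma \ref{lemma-gamma-Bc-cont}, hence the inner product $\langle \eta_h, \int_{B^c} \gamma(h,x) F(dx) \rangle$ equals $\int_{B^c} \langle \eta_h, \gamma(h,x) \rangle F(dx)$ and is in particular finite; the second integral is finite by \eqref{manifold-bound-B} (which, by Lemma \ref{lemma-vf-vf-1}, is equivalent to \eqref{manifold-bound-C}, and in any case dominates $|\langle \eta_h, \gamma^B(h,\cdot) \rangle|$ pointwise).

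Next, since both integrals on the right are absolutely convergent, I can combine them into a single integral over $E$, obtaining
\begin{align*}
\langle \eta_h, \beta^B(h) \rangle - \int_E \langle \eta_h, \gamma^B(h,x) \rangle F(dx)
= \langle \eta_h, \beta(h) \rangle - \int_E \langle \eta_h, \gamma(h,x) \rangle F(dx),
\end{align*}
which is exactly the expression whose nonnegativity is required in \eqref{beta-geq-0-B}. Since this identity holds for every boundary point $h \in \partial \mathcal{M}$, nonnegativity of one side for all such $h$ is tautologically equivalent to nonnegativity of the other side, yielding \eqref{beta-geq-0-B} $\Leftrightarrow$ \eqref{beta-geq-0-C}.

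There is essentially no obstacle here beyond bookkeeping the integrability: hypothesis \eqref{manifold-bound-B} is used only to guarantee that the split $\int_E = \int_{B^c} + \int_B$ of $\langle \eta_h, \gamma(h,\cdot) \rangle$ makes sense term by term, so that the two formulations of the drift inequality are literally the same real number. No manifold or semigroup structure enters the argument beyond what has already been invoked in Lemmas \ref{lemma-vf-vf-1} and \ref{lemma-vf-vf-2}.
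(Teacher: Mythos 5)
Your proposal is correct and follows essentially the same route as the paper: expand $\beta^B$ by its definition, pass the inner product with $\eta_h$ through the Bochner integral over $B^c$, and recombine with the integral over $B$ (finite by (\ref{manifold-bound-B})) to see that the two quantities coincide pointwise on $\partial \mathcal{M}$. Your added bookkeeping of the integrability is exactly the implicit content of the paper's one-line computation, so there is nothing to correct.
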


\begin{proof}
According to Lemma~\ref{lemma-vf-vf-1}, condition (\ref{manifold-bound-C}) is satisfied, too. Let $h \in \partial \mathcal{M}$ be arbitrary. Then we have
\begin{align*}
&\langle \eta_h,\beta^B(h) \rangle - \int_{E} \langle \eta_h,\gamma^B(h,x) \rangle F(dx)
\\ &= \Big\langle \eta_h,\beta(h) - \int_{B^c} \gamma(h,x) F(dx) \Big\rangle - \int_{B} \langle \eta_h,\gamma(h,x) \rangle F(dx)
\\ &= \langle \eta_h,\beta(h) \rangle - \int_{E} \langle \eta_h,\gamma(h,x) \rangle F(dx),
\end{align*}
proving the claimed equivalence (\ref{beta-geq-0-B}) $\Leftrightarrow$ (\ref{beta-geq-0-C}).
\end{proof}

Let $G$ be another separable Hilbert space and let $\mathcal{N}$ a $m$-dimensional $C^3$-submanifold with boundary of $G$. We assume there exist parametrizations $\phi : V \subset \mathbb{R}_+^m \rightarrow \mathcal{M}$ and $\psi : V \subset \mathbb{R}_+^m \rightarrow \mathcal{N}$. Defining $f := \phi \circ \psi^{-1} : \mathcal{N} \rightarrow \mathcal{M}$ and $g := \psi \circ \phi^{-1} : \mathcal{M} \rightarrow \mathcal{N}$, the diagram
\begin{align*}
\begin{xy}
  \xymatrix{
      \mathcal{N} \subset G \ar@<2pt>[rr]^f  &     &  \mathcal{M} \subset H \ar@<2pt>[ll]^{g} \\
                             & \ar[ul]_{\psi} V \subset \mathbb{R}_+^m \ar[ru]^{\phi} &
  }
\end{xy}
\end{align*}
commutes. We assume that $\phi$, $\psi$, $\Phi := \phi^{-1}$, $\Psi := \psi^{-1}$ have extensions $\phi \in C_{b}^3(\mathbb{R}^m;H)$, $\psi \in C_{b}^3(\mathbb{R}^m;G)$, $\Phi \in C_{b}^3(H;\mathbb{R}^m)$, $\Psi \in C_{b}^3(G;\mathbb{R}^m)$. Consequently, the mappings $f$, $g$ have extensions $f \in C_{b}^3(G;H)$, $g \in C_{b}^3(H;G)$. Let $O_{\mathcal{M}} \subset C_{\mathcal{M}} \subset \mathcal{M}$ be subsets such that $O_{\mathcal{M}}$ is open in $\mathcal{M}$. We define the subsets $O_{\mathcal{N}} \subset C_{\mathcal{N}} \subset \mathcal{N}$ by $O_{\mathcal{N}} := g(O_{\mathcal{M}})$, $C_{\mathcal{N}} := g(C_{\mathcal{M}})$ and the subsets $O_{V} \subset C_{V} \subset V$ by $O_{V} := \psi^{-1}(O_{\mathcal{N}})$, $C_{V} := \psi^{-1}(C_{\mathcal{N}})$. Since $f : \mathcal{N} \rightarrow \mathcal{M}$ and $\psi : V \rightarrow \mathcal{N}$ are homeomorphisms, $O_{\mathcal{N}}$ is open in $\mathcal{N}$ and $O_V$ is open in $V$.

Let $\beta : O_{\mathcal{M}} \rightarrow H$, $\sigma^j : H \rightarrow H$, $j \in \mathbb{N}$, $\gamma : H \times E \rightarrow H$ and $a : O_{\mathcal{N}} \rightarrow G$, $b^j : G \rightarrow G$, $j \in \mathbb{N}$, $c : G \times E \rightarrow G$ be mappings satisfying the regularity conditions (\ref{Lipschitz-sigma-st}), (\ref{linear-growth-sigma}) and (\ref{Lipschitz-gamma-rho})--(\ref{sigma-C1}). The mappings $f_{\lambda}^{\star} \beta : O_{\mathcal{N}} \rightarrow G$, $f_W^{\star}\sigma^j : O_{\mathcal{N}} \rightarrow G$, $j \in \mathbb{N}$ and $f_{\mu}^{\star} \gamma : O_{\mathcal{N}} \times E \rightarrow G$ are defined as
\begin{align*}
(f_{\lambda}^{\star} \beta)(z) &:= ((f,g)_{\lambda}^{\star} \beta)(z),
\\ (f_W^{\star} \sigma^j)(z) &:= ((f,g)_W^{\star} \sigma^j)(z),
\\ (f_{\mu}^{\star} \gamma)(z,x) &:= ((f,g)_{\mu}^{\star} \gamma)(z,x)
\end{align*}
according to (\ref{star-1})--(\ref{star-3}).
In the sequel, for $z \in \partial \mathcal{N}$ the vector $\xi_z$ denotes the inward pointing normal vector to $\partial \mathcal{N}$ at $z$.

\begin{proposition}\label{prop-cond-imply-cond}
Suppose that
\begin{align}\label{a-f-star-beta}
a(z) &= (f_{\lambda}^{\star} \beta)(z), \quad z \in O_{\mathcal{N}},
\\ \label{b-f-star-sigma} b^j(z) &= (f_W^{\star} \sigma^j)(z), \quad j \in \mathbb{N} \text{ and } z \in O_{\mathcal{N}},
\\ \label{c-f-star-gamma} c(z,x) &= (f_{\mu}^{\star} \gamma)(z,x) \quad \text{for $F$-almost all $x \in E$,} \quad \text{for all $z \in O_{\mathcal{N}}$,}
\end{align}
and that the following conditions are satisfied:
\begin{align}\label{sigma-tangent-local-C-1}
&\sigma^j(h) \in T_h \mathcal{M}, \quad h \in O_{\mathcal{M}}, \quad j \in \mathbb{N},
\\ \label{sigma-tangent-local-C-2} &\sigma^j(h) \in T_h \partial \mathcal{M}, \quad h \in O_{\mathcal{M}} \cap \partial \mathcal{M}, \quad j \in \mathbb{N},
\\ \label{jumps-local-C}
&h + \gamma(h,x) \in C_{\mathcal{M}} \quad \text{for $F$-almost all $x \in E$,} \quad \text{for all $h \in O_{\mathcal{M}}$,}
\\ \label{manifold-boundary-local-C} &\int_{E} |\langle \eta_h, \gamma(h,x) \rangle| F(dx) < \infty, \quad h \in O_{\mathcal{M}} \cap \partial \mathcal{M},
\\ \label{alpha-tangent-local-C} &\beta(h) - \frac{1}{2} \sum_{j \in \mathbb{N}} D\sigma^j(h)\sigma^j(h)
\\ & \notag \quad- \int_E \Pi_{(T_h \mathcal{M})^{\perp}} \gamma(h,x) F(dx) \in T_h
\mathcal{M}, \quad h \in O_{\mathcal{M}},
\\ \label{alpha-tangent-plus-local-C} &\langle \eta_h,\beta(h) \rangle - \frac{1}{2} \sum_{j \in \mathbb{N}} \langle \eta_h, D \sigma^j(h) \sigma^j(h) \rangle
\\ \notag &\quad - \int_{E} \langle \eta_h,\gamma(h,x) \rangle F(dx) \geq 0, \quad h \in O_{\mathcal{M}} \cap \partial
\mathcal{M}.
\end{align}
Then the following conditions also hold true:
\begin{align}\label{sigma-tangent-local-D-1}
&b^j(z) \in T_z \mathcal{N}, \quad z \in O_{\mathcal{N}}, \quad j \in \mathbb{N},
\\ \label{sigma-tangent-local-D-2} &b^j(z) \in T_z \partial \mathcal{N}, \quad z \in O_{\mathcal{N}} \cap \partial \mathcal{N}, \quad j \in \mathbb{N},
\\ \label{jumps-local-D}
&z + c(z,x) \in C_{\mathcal{N}} \quad \text{for $F$-almost all $x \in E$,} \quad \text{for all $z \in O_{\mathcal{N}}$,}
\\ \label{manifold-boundary-local-D} &\int_{E} |\langle \xi_z, c(z,x) \rangle| F(dx) < \infty, \quad z \in O_{\mathcal{N}} \cap \partial \mathcal{N},
\\ \label{alpha-tangent-local-D} &a(z) - \frac{1}{2} \sum_{j \in \mathbb{N}} Db^j(z)b^j(z)
\\ \notag &\quad- \int_E \Pi_{(T_z \mathcal{N})^{\perp}} c(z,x) F(dx) \in T_z
\mathcal{N}, \quad z \in O_{\mathcal{N}},
\\ \label{alpha-tangent-plus-local-D} &\langle \xi_z,a(z) \rangle - \frac{1}{2} \sum_{j \in \mathbb{N}} \langle \xi_z, D b^j(z) b^j(z) \rangle
\\ \notag &\quad - \int_{E} \langle \xi_z,c(z,x) \rangle F(dx) \geq 0, \quad z \in O_{\mathcal{N}} \cap \partial \mathcal{N}.
\end{align}
\end{proposition}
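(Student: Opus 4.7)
\textbf{Proof plan for Proposition \ref{prop-cond-imply-cond}.}

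The strategy is to transport each geometric condition from $\mathcal{M}$ to $\mathcal{N}$ via the diffeomorphism $g:\mathcal{M}\to\mathcal{N}$, using the identities (\ref{a-f-star-beta})--(\ref{c-f-star-gamma}) together with two structural facts. First, because $g\circ\phi=\psi$, the derivative $Dg(h)$ maps $T_h\mathcal{M}$ isomorphically onto $T_z\mathcal{N}$ and maps $T_h\partial\mathcal{M}$ onto $T_z\partial\mathcal{N}$ (by Lemma \ref{lemma-diffeo-boundary} applied to $\psi\circ\phi^{-1}$); moreover $Df(z)Dg(h)$ is the identity on $T_h\mathcal{M}$, since $f\circ g=\mathrm{id}_{\mathcal{M}}$. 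Second, Lemma \ref{lemma-repr} applied to both $\phi$ and $\psi$ at the common parameter $y=\phi^{-1}(h)=\psi^{-1}(z)$ with $h\in\partial\mathcal{M}$ produces constants $\lambda,\mu>0$ with $\langle\eta_h,D\phi(y)v\rangle=\lambda\langle e_1,v\rangle$ and $\langle\xi_z,D\psi(y)v\rangle=\mu\langle e_1,v\rangle$; writing any $w\in T_h\mathcal{M}$ as $w=D\phi(y)v$ yields the crucial normal-vector transport identity
\begin{align*}
\langle\xi_z,Dg(h)w\rangle=\tfrac{\mu}{\lambda}\langle\eta_h,w\rangle,\qquad w\in T_h\mathcal{M},\;h\in O_{\mathcal{M}}\cap\partial\mathcal{M}.
\end{align*}

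With these tools the first three conditions are immediate. Statements (\ref{sigma-tangent-local-D-1}) and (\ref{sigma-tangent-local-D-2}) follow from $b^j(z)=Dg(h)\sigma^j(h)$ together with (\ref{sigma-tangent-local-C-1}), (\ref{sigma-tangent-local-C-2}) and the tangency transport. For (\ref{jumps-local-D}) we compute $z+c(z,x)=g(h)+(g(h+\gamma(h,x))-g(h))=g(h+\gamma(h,x))$, so (\ref{jumps-local-C}) gives $z+c(z,x)\in g(C_{\mathcal{M}})=C_{\mathcal{N}}$. This jump inclusion, via Lemma \ref{lemma-jumps-eps-delta}, also shows that $c$ satisfies the $\epsilon$--$\delta$ jump condition on $O_{\mathcal{N}}$, so Lemma \ref{lemma-proj-tang-cont} applies and gives $\int_E\|\Pi_{(T_z\mathcal{N})^\perp}c(z,x)\|F(dx)<\infty$, which is needed for (\ref{alpha-tangent-local-D}).

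For (\ref{manifold-boundary-local-D}) I Taylor-expand $c(z,x)=Dg(h)\gamma(h,x)+R(h,x)$ with $\|R(h,x)\|\leq\tfrac12\|D^2g\|_\infty\|\gamma(h,x)\|^2$; splitting $\gamma=\Pi_{T_h\mathcal{M}}\gamma+\Pi_{(T_h\mathcal{M})^\perp}\gamma$ and applying the normal-vector identity to the tangential piece gives $\langle\xi_z,Dg(h)\Pi_{T_h\mathcal{M}}\gamma(h,x)\rangle=\tfrac{\mu}{\lambda}\langle\eta_h,\gamma(h,x)\rangle$ (using $\eta_h\in T_h\mathcal{M}$), so integrability of $\langle\xi_z,c(z,x)\rangle$ follows from (\ref{manifold-boundary-local-C}), Lemma \ref{lemma-proj-tang-cont}, and the $L^1$-bound on the Taylor remainder from (\ref{rho-integrable}). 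The central computation for (\ref{alpha-tangent-local-D}) and (\ref{alpha-tangent-plus-local-D}) is to expand
\begin{align*}
Db^j(z)b^j(z)=D^2g(h)(\sigma^j(h),\sigma^j(h))+Dg(h)D\sigma^j(h)\sigma^j(h),
\end{align*}
using $Df(z)Dg(h)\sigma^j(h)=\sigma^j(h)$ since $\sigma^j(h)\in T_h\mathcal{M}$. Combined with (\ref{a-f-star-beta}) this yields
\begin{align*}
a(z)-\tfrac12\!\sum_{j}Db^j(z)b^j(z)=Dg(h)\bigl[\beta(h)-\tfrac12\!\sum_{j}D\sigma^j(h)\sigma^j(h)\bigr]+\!\int_E\!\bigl(c(z,x)-Dg(h)\gamma(h,x)\bigr)F(dx).
\end{align*}
Writing the bracket as $\alpha^\ast(h)+\int_E\Pi_{(T_h\mathcal{M})^\perp}\gamma(h,x)F(dx)$ with $\alpha^\ast(h)\in T_h\mathcal{M}$ by (\ref{alpha-tangent-local-C}), and rearranging, produces
\begin{align*}
a(z)-\tfrac12\!\sum_{j}Db^j(z)b^j(z)-\!\int_E\!\Pi_{(T_z\mathcal{N})^\perp}c(z,x)F(dx)=Dg(h)\alpha^\ast(h)+\!\int_E\!\bigl[\Pi_{T_z\mathcal{N}}c(z,x)-Dg(h)\Pi_{T_h\mathcal{M}}\gamma(h,x)\bigr]F(dx),
\end{align*}
whose right-hand side lies in $T_z\mathcal{N}$ integrand-wise, yielding (\ref{alpha-tangent-local-D}). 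Taking $\langle\xi_z,\cdot\rangle$ and invoking the normal-vector transport identity collapses all tangential pieces and the $(T_h\mathcal{M})^\perp$-integral to $\tfrac{\mu}{\lambda}$ times the left-hand side of (\ref{alpha-tangent-plus-local-C}), delivering (\ref{alpha-tangent-plus-local-D}).

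The main obstacle is bookkeeping: keeping track of which terms are tangential (so that the normal-vector identity applies verbatim) versus which require Taylor remainders or the $L^1$-control provided by Lemma \ref{lemma-proj-tang-cont}, and ensuring every integral one writes down is well-defined before splitting it. The rest is an exercise in careful rearrangement.
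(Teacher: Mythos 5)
Your proposal is correct and follows essentially the same route as the paper: transport by $Dg$ of tangent spaces, inward cones and boundary tangents, the normal-vector identity $\langle \xi_z, Dg(h)w \rangle = {\rm const} \cdot \langle \eta_h, w \rangle$ for $w \in T_h\mathcal{M}$ obtained from Lemma \ref{lemma-repr} (this is the paper's Lemma \ref{lemma-inner-prod-two}), the decomposition $Db^j(z)b^j(z) = Dg(h)(D\sigma^j(h)\sigma^j(h)) + D^2 g(h)(\sigma^j(h),\sigma^j(h))$ (the paper's Lemma \ref{lemma-decomposition}), and the same rearrangement and projection splitting for the two drift conditions, while your direct second-order Taylor expansion of $c$ for (\ref{manifold-boundary-local-D}) is just a repackaging of the paper's Lemma \ref{lemma-eta-int-fin}. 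One small point of care: since $b^j = (f^{\star}\sigma^j)$ holds only on $O_{\mathcal{N}}$, the decomposition of $Db^j(z)b^j(z)$ cannot be obtained by differentiating that identity in arbitrary directions of $G$; it has to be computed along curves lying in the manifold (as in the paper's Lemma \ref{lemma-decomp-pre}), which is exactly where the tangency hypothesis (\ref{sigma-tangent-local-C-1}) that you invoke enters.
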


For the proof of Proposition~\ref{prop-cond-imply-cond} we prepare several auxiliary results. Note that, under conditions (\ref{a-f-star-beta})--(\ref{c-f-star-gamma}), for all $z \in O_{\mathcal{N}}$ we have
\begin{align}\label{repr-a}
a(z) &= Dg(h)\beta(h) + \frac{1}{2} \sum_{j \in \mathbb{N}} D^2 g(h)(\sigma^j(h),\sigma^j(h))
\\ \notag &\quad + \int_E \big( g(h+\gamma(h,x)) - g(h) - Dg(h)\gamma(h,x) \big) F(dx),
\\ \label{repr-b-j} b^j(z) &= Dg(h)\sigma^j(h) \quad \text{for all $j \in \mathbb{N}$,}
\\ \label{repr-c} c(z,x) &= g(h+\gamma(h,x)) - g(h) \quad \text{for $F$-almost all $x \in E$,}
\end{align}
where $h = f(z) \in O_{\mathcal{M}}$.

\begin{lemma}\label{lemma-tang-pull-back}
Let $h \in \mathcal{M}$ be arbitrary and set $z :
= g(h) \in \mathcal{N}$.
\begin{enumerate}
\item For each $w \in T_h \mathcal{M}$ we have $Dg(h)w \in T_z \mathcal{N}$.

\item For each $w \in (T_h \mathcal{M})_+$ we have $Dg(h)w \in (T_z \mathcal{N})_+$.

\item For each $w \in T_h \partial \mathcal{M}$ we have $Dg(h)w \in T_z \partial \mathcal{N}$.
\end{enumerate}
\end{lemma}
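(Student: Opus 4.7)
The plan is to reduce all three claims to the chain rule applied to the identity $g \circ \phi = \psi$ on $V$, together with the defining descriptions of $T_h\mathcal{M}$, $(T_h\mathcal{M})_+$ and $T_h\partial\mathcal{M}$ as images of $\mathbb{R}^m$, $\mathbb{R}_+^m$ and $\partial\mathbb{R}_+^m$ under $D\phi(y)$ (see Definition \ref{def-submanifold} together with (\ref{def-t-space}), (\ref{def-t-space-plus}) and (\ref{tangent-partial})).

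Set $y := \Phi(h) = \phi^{-1}(h) \in V$, so that $z = g(h) = \psi(\Phi(h)) = \psi(y)$, in particular $y = \psi^{-1}(z)$. Since $g \in C_b^3(H;G)$ extends $\psi \circ \phi^{-1}$, we have $g \circ \phi = \psi$ pointwise on $V$; differentiating at $y$ by the chain rule yields
\begin{equation*}
Dg(h)\, D\phi(y) = D\psi(y) \quad \text{as maps } \mathbb{R}^m \to G.
\end{equation*}
This is the only computation needed.

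For statement (1), any $w \in T_h \mathcal{M} = D\phi(y)\mathbb{R}^m$ may be written as $w = D\phi(y) v$ for some $v \in \mathbb{R}^m$, whence $Dg(h) w = D\psi(y) v \in D\psi(y)\mathbb{R}^m = T_z\mathcal{N}$. For statement (2), $w \in (T_h\mathcal{M})_+$ presupposes $h \in \partial\mathcal{M}$, which by Definition \ref{def-submanifold} is equivalent to $y \in \partial V$; then $w = D\phi(y) v$ for some $v \in \mathbb{R}_+^m$, so $Dg(h) w = D\psi(y) v \in D\psi(y)\mathbb{R}_+^m = (T_z\mathcal{N})_+$. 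For statement (3), the same $y \in \partial V$ applies and $w = D\phi(y) v$ with $v \in \partial \mathbb{R}_+^m$, giving $Dg(h) w = D\psi(y) v \in D\psi(y)\partial\mathbb{R}_+^m = T_z\partial\mathcal{N}$ via (\ref{tangent-partial}).

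There is no serious obstacle: the content of the lemma is precisely that the two parametrizations $\phi$ and $\psi$ share the same parameter domain $V$, so the "transition" between $\mathcal{M}$ and $\mathcal{N}$ induced by $g$ corresponds on the parameter side to the identity on $V$, which trivially preserves $\mathbb{R}^m$, $\mathbb{R}_+^m$ and $\partial\mathbb{R}_+^m$. The only minor point to be careful about is to note that $w \in (T_h\mathcal{M})_+$ or $w \in T_h\partial\mathcal{M}$ tacitly forces $h \in \partial\mathcal{M}$, hence $y \in \partial V$, so that the definitions (\ref{def-t-space-plus}) and (\ref{tangent-partial}) of $(T_z\mathcal{N})_+$ and $T_z\partial\mathcal{N}$ at the corresponding point $z$ are legitimate.
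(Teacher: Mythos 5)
Your proof is correct and takes essentially the same route as the paper: the paper obtains the same key identity by writing $g = \psi \circ \Phi$ and invoking Lemma \ref{lemma-inv-special}, so that $Dg(h)w = D\psi(y)\bigl(D\phi(y)^{-1}w\bigr)$ for $w \in T_h\mathcal{M}$, which is just your chain-rule identity $Dg(h)\,D\phi(y) = D\psi(y)$ read backwards, after which all three statements follow from the definitions (\ref{def-t-space}), (\ref{def-t-space-plus}), (\ref{tangent-partial}) exactly as you argue. The only implicit step in your version, namely justifying the chain rule for $g\circ\phi=\psi$ at boundary points $y \in \partial V$, is harmless because derivatives there are defined via the $C^k$-extensions (Definition \ref{def-Ck-map}) and agree with the limit of derivatives from the interior of $V$.
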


\begin{proof}
Let $w \in T_h \mathcal{M}$ be arbitrary and set $y := \phi^{-1}(h) \in V$. By Lemma~\ref{lemma-inv-special} we have
\begin{align*}
Dg(h) w = D(\psi \circ \Phi)(h) w = D\psi(y) D \Phi(h) w = D\psi(y) ( D \phi(y)^{-1} w ),
\end{align*}
proving the three assertions.
\end{proof}

\begin{lemma}\label{lemma-imply-0}
Suppose that (\ref{b-f-star-sigma}) is satisfied. Then the following statements are true:
\begin{enumerate}
\item Condition (\ref{sigma-tangent-local-C-1}) implies (\ref{sigma-tangent-local-D-1}).

\item Condition (\ref{sigma-tangent-local-C-2}) implies (\ref{sigma-tangent-local-D-2}).
\end{enumerate}
\end{lemma}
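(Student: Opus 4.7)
The plan is to reduce both implications to a direct application of Lemma \ref{lemma-tang-pull-back}. Hypothesis (\ref{b-f-star-sigma}) gives the pointwise formula $b^j(z) = Dg(h)\sigma^j(h)$ for $z \in O_{\mathcal{N}}$ and $h := f(z) \in O_{\mathcal{M}}$, so everything reduces to showing that $Dg(h)$ sends the relevant tangent object of $\mathcal{M}$ at $h$ to the corresponding tangent object of $\mathcal{N}$ at $z = g(h)$, which is exactly what Lemma \ref{lemma-tang-pull-back} provides.

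For the first statement, I would fix $z \in O_{\mathcal{N}}$, set $h := f(z) \in O_{\mathcal{M}}$, invoke (\ref{sigma-tangent-local-C-1}) to get $\sigma^j(h) \in T_h \mathcal{M}$, and then apply Lemma \ref{lemma-tang-pull-back}(1) to conclude $b^j(z) = Dg(h)\sigma^j(h) \in T_z \mathcal{N}$. For the second statement, the only extra point to verify is that $f$ carries boundary to boundary, so that $z \in O_{\mathcal{N}} \cap \partial \mathcal{N}$ implies $h = f(z) \in O_{\mathcal{M}} \cap \partial \mathcal{M}$; this is immediate from $f = \phi \circ \psi^{-1}$ together with $\psi(\partial V) = \partial \mathcal{N}$ and $\phi(\partial V) = \partial \mathcal{M}$ (a consequence of Lemma \ref{lemma-diffeo-boundary} applied to the transition maps characterizing the intrinsic boundary in Definition \ref{def-submanifold}). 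Once $h \in \partial \mathcal{M}$ is established, (\ref{sigma-tangent-local-C-2}) yields $\sigma^j(h) \in T_h \partial \mathcal{M}$, and Lemma \ref{lemma-tang-pull-back}(3) gives $b^j(z) = Dg(h)\sigma^j(h) \in T_z \partial \mathcal{N}$.

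There is essentially no obstacle here: the lemma is a clean functoriality statement, and the only subtlety worth mentioning explicitly is the boundary-preservation of $f$, which is structural rather than computational. No integrability or analytic issues enter because the conditions (\ref{sigma-tangent-local-C-1})--(\ref{sigma-tangent-local-C-2}) and (\ref{sigma-tangent-local-D-1})--(\ref{sigma-tangent-local-D-2}) are pointwise linear-algebraic in the tangent spaces, and Lemma \ref{lemma-tang-pull-back} was already set up to deliver precisely the required inclusions.
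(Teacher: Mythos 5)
Your proof is correct and is essentially the paper's own argument: the paper likewise combines the representation $b^j(z) = Dg(h)\sigma^j(h)$ (a direct consequence of (\ref{b-f-star-sigma})) with Lemma \ref{lemma-tang-pull-back}. Your explicit remark on boundary preservation of $f$ via the common parametrizations $\phi$, $\psi$ is a correct elaboration of a point the paper leaves tacit.
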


\begin{proof}
This follows from (\ref{repr-b-j}) and Lemma~\ref{lemma-tang-pull-back}.
\end{proof}

\begin{lemma}\label{lemma-imply-1}
Suppose that (\ref{c-f-star-gamma}) is satisfied. Then condition (\ref{jumps-local-C}) implies (\ref{jumps-local-D}).
\end{lemma}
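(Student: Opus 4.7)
The proof will be essentially immediate from the pull-back identity together with the definition of $C_{\mathcal{N}}$.

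My plan is to fix an arbitrary $z \in O_{\mathcal{N}}$ and set $h := f(z)$, which lies in $O_{\mathcal{M}}$ because $f(O_{\mathcal{N}}) = O_{\mathcal{M}}$ (recall $g$ and $f$ are mutually inverse bijections between $\mathcal{M}$ and $\mathcal{N}$ with $O_{\mathcal{N}} = g(O_{\mathcal{M}})$). In particular $g(h) = z$. Next I would invoke the hypothesis (\ref{c-f-star-gamma}), which in the representation (\ref{repr-c}) reads
\begin{align*}
c(z,x) = g(h+\gamma(h,x)) - g(h) \quad \text{for $F$--almost all $x \in E$.}
\end{align*}
Adding $z = g(h)$ to both sides gives $z + c(z,x) = g(h+\gamma(h,x))$ for $F$--almost all $x \in E$.

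Now I would apply the hypothesis (\ref{jumps-local-C}): there is an $F$--nullset $N \subset E$ such that $h+\gamma(h,x) \in C_{\mathcal{M}}$ for every $x \in N^c$. Intersecting with the $F$--nullset outside which the pull-back identity above holds, we obtain a single $F$--nullset off which
\begin{align*}
z + c(z,x) = g(h+\gamma(h,x)) \in g(C_{\mathcal{M}}) = C_{\mathcal{N}},
\end{align*}
where the last equality is the very definition of $C_{\mathcal{N}}$ given just before the statement of the lemma. Since $z \in O_{\mathcal{N}}$ was arbitrary, this establishes (\ref{jumps-local-D}).

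There is no genuine obstacle here; the only mild care is in combining the two $F$--null exceptional sets (one coming from the pull-back identity, one from the jump condition in $\mathcal{M}$) into a single $F$--nullset, which is trivial. Consequently the proof reduces to transporting the jump condition from $\mathcal{M}$ to $\mathcal{N}$ by the homeomorphism $g$, using that the pull-back formula $c(z,x) = g(h+\gamma(h,x)) - g(h)$ is exactly what is needed to make $z + c(z,x)$ land in $g$ of the jump target.
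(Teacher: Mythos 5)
Your proof is correct and is essentially identical to the paper's argument: set $h = f(z)$, use the representation $c(z,x) = g(h+\gamma(h,x)) - g(h)$ coming from (\ref{c-f-star-gamma}), add $z = g(h)$, and conclude $z + c(z,x) = g(h+\gamma(h,x)) \in g(C_{\mathcal{M}}) = C_{\mathcal{N}}$ for $F$--almost all $x$. The only difference is your explicit mention of merging the two exceptional $F$--nullsets, which the paper leaves implicit.
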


\begin{proof}
Let $z \in O_{\mathcal{N}}$ be arbitrary and set $h := f(z) \in O_{\mathcal{M}}$. Then, by (\ref{repr-c}) and (\ref{jumps-local-C}), for $F$-almost all $x \in E$ we obtain
\begin{align*}
z + c(z,x) &= z + g(h + \gamma(h,x)) - g(h) = g(h + \gamma(h,x)) \in C_{\mathcal{N}},
\end{align*}
showing (\ref{jumps-local-D}).
\end{proof}

\begin{lemma}\label{lemma-eps-delta-apply}
Suppose that
\begin{align*}
h + \gamma(h,x) \in \overline{\mathcal{M}} \quad \text{for $F$-almost all $x \in E$,} \quad \text{for all $h \in O_{\mathcal{M}}$.}
\end{align*}
Then $\gamma$ satisfies the $\epsilon$-$\delta$-jump condition on $O_{\mathcal{M}}$.
\end{lemma}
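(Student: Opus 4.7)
The plan is to reduce the statement to Lemma \ref{lemma-jumps-eps-delta}, which already handles the $\epsilon$-$\delta$-jump condition at a single point $h_0$ under a localized hypothesis. All that needs to be verified is that the hypothesis of Lemma \ref{lemma-jumps-eps-delta} can be met at every point of $O_{\mathcal{M}}$.

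First, I would fix an arbitrary $h_0 \in O_{\mathcal{M}}$. Since $O_{\mathcal{M}}$ is open in $\mathcal{M}$ (one of the standing assumptions on $O_{\mathcal{M}}$ in this appendix), there exists an open subset $U \subset H$ with $h_0 \in U$ such that $U \cap \mathcal{M} \subset O_{\mathcal{M}}$. By the hypothesis of the current lemma, for every $h \in O_{\mathcal{M}}$ we have $h + \gamma(h,x) \in \overline{\mathcal{M}}$ for $F$--almost all $x \in E$. In particular, this property holds for every $h \in U \cap \mathcal{M}$, so the hypothesis (\ref{jumps-on-O}) of Lemma \ref{lemma-jumps-eps-delta} is satisfied at $h_0$ with this choice of neighborhood $U$.

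Invoking Lemma \ref{lemma-jumps-eps-delta} then yields directly that $\gamma$ satisfies the $\epsilon$-$\delta$-jump condition in $h_0$. Since $h_0 \in O_{\mathcal{M}}$ was arbitrary, this means $\gamma$ satisfies the $\epsilon$-$\delta$-jump condition on $O_{\mathcal{M}}$, as required.

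There is essentially no obstacle here: the proof is a one-line application of Lemma \ref{lemma-jumps-eps-delta} combined with the openness of $O_{\mathcal{M}}$ in $\mathcal{M}$, which allows the pointwise jump hypothesis on $O_{\mathcal{M}}$ to be transferred into the form of a neighborhood hypothesis required by Lemma \ref{lemma-jumps-eps-delta}. The only thing one needs to be slightly careful about is that the assertion of Lemma \ref{lemma-jumps-eps-delta} produces constants $\epsilon_0,\delta$ and a set $B$ possibly depending on $h_0$, but this dependence is permitted by the definition of the $\epsilon$-$\delta$-jump condition on $O_{\mathcal{M}}$, which is a pointwise notion.
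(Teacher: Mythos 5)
Your route is the paper's route: fix $h_0\in O_{\mathcal{M}}$, use that $O_{\mathcal{M}}$ is open in $\mathcal{M}$ to produce a neighborhood $U$ with $U\cap\mathcal{M}\subset O_{\mathcal{M}}$ on which the hypothesis (\ref{jumps-on-O}) holds, and invoke Lemma \ref{lemma-jumps-eps-delta}. However, you stop one step short, and your closing remark argues away exactly the point that still needs an argument. Lemma \ref{lemma-jumps-eps-delta} gives the $\epsilon$--$\delta$--jump condition in $h_0$ \emph{relative to $\mathcal{M}$}: compactness of $\overline{B_{\epsilon}(h_0)}\cap\mathcal{M}$ and $h+\gamma(h,x)\in\overline{B_{\epsilon}(h_0)}\cap\mathcal{M}$ for all $h\in B_{\delta}(h_0)\cap\mathcal{M}$. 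The conclusion to be proved is the condition \emph{on $O_{\mathcal{M}}$}, i.e.\ with $O_{\mathcal{M}}$ playing the role of the submanifold in the definition: $\overline{B_{\epsilon}(h_0)}\cap O_{\mathcal{M}}$ compact and $h+\gamma(h,x)\in\overline{B_{\epsilon}(h_0)}\cap O_{\mathcal{M}}$ for all $h\in B_{\delta}(h_0)\cap O_{\mathcal{M}}$. This stronger, relativized form is what is actually used downstream (Proposition \ref{prop-inv-cond-B} and Corollary \ref{cor-proj-tang-cont} apply Proposition \ref{prop-vf-vf-B} and Lemmas \ref{lemma-Taylor-para}, \ref{lemma-proj-tang-cont}, \ref{lemma-eta-int-fin} with $O_{\mathcal{M}}$ in place of $\mathcal{M}$), so interpreting the conclusion as merely ``in $h_0$ for each $h_0\in O_{\mathcal{M}}$, with respect to $\mathcal{M}$'' is not enough; in particular, compactness of $\overline{B_{\epsilon}(h_0)}\cap\mathcal{M}$ does not by itself give compactness of $\overline{B_{\epsilon}(h_0)}\cap O_{\mathcal{M}}$, nor does it force the jumps to land in $O_{\mathcal{M}}$.

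The missing step is small and uses the openness of $O_{\mathcal{M}}$ a second time, exactly as in the paper's proof: after choosing $U$ open in $H$ with $h_0\in U$ and $U\cap\mathcal{M}\subset O_{\mathcal{M}}$, shrink $\epsilon_0$ so that $\overline{B_{\epsilon_0}(h_0)}\subset U$. Then for every $0<\epsilon\leq\epsilon_0$ one has $\overline{B_{\epsilon}(h_0)}\cap\mathcal{M}=\overline{B_{\epsilon}(h_0)}\cap O_{\mathcal{M}}$, so this set is compact, and the inclusion (\ref{jumps-eps-delta}) delivered by Lemma \ref{lemma-jumps-eps-delta} relativizes: for $h\in B_{\delta}(h_0)\cap O_{\mathcal{M}}\subset B_{\delta}(h_0)\cap\mathcal{M}$ one gets $h+\gamma(h,x)\in\overline{B_{\epsilon}(h_0)}\cap O_{\mathcal{M}}$ for $F$--almost all $x\in B$. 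With this sentence added, your proof coincides with the paper's.
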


\begin{proof}
Let $h_0 \in O_{\mathcal{M}}$ be arbitrary. By Lemma~\ref{lemma-jumps-eps-delta}, and since $O_{\mathcal{M}}$ is open in $\mathcal{M}$, there exists $\epsilon_0 > 0$ such that for every $0 < \epsilon \leq \epsilon_0$ there are $0 < \delta < \epsilon$ and a set $B \in \mathcal{E}$ with $F(B^c) < \infty$ such that $\overline{B_{\epsilon}(h_0)} \cap \mathcal{M}$ is compact, $\overline{B_{\epsilon}(h_0)} \cap \mathcal{M} \subset O_{\mathcal{M}}$ and (\ref{jumps-eps-delta}) is satisfied. Noting that
\begin{align*}
\overline{B_{\epsilon}(h_0)} \cap \mathcal{M} = \overline{B_{\epsilon}(h_0)} \cap O_{\mathcal{M}},
\end{align*}
we deduce that
\begin{align*}
h + \gamma(h,x) \in \overline{B_{\epsilon}(h_0)} \cap O_{\mathcal{M}} \quad \text{for $F$-almost all $x \in B$,} \quad \text{for all $h \in B_{\delta}(h_0) \cap O_{\mathcal{M}}$,}
\end{align*}
finishing the proof.
\end{proof}

\begin{corollary}\label{cor-proj-tang-cont}
Suppose that condition (\ref{jumps-local-C}) is satisfied. Then the following statements are true:
\begin{enumerate}
\item For each $h \in O_{\mathcal{M}}$ we have
\begin{align*}
\int_E \| \Pi_{(T_h \mathcal{M})^{\perp}} \gamma(h,x) \| F(dx) < \infty.
\end{align*}

\item The mapping
\begin{align*}
O_{\mathcal{M}} \rightarrow H, \quad h \mapsto \int_E \Pi_{(T_h \mathcal{M})^{\perp}} \gamma(h,x) F(dx)
\end{align*}
is continuous.
\end{enumerate}
\end{corollary}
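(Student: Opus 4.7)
The plan is to deduce Corollary \ref{cor-proj-tang-cont} directly from Lemmas \ref{lemma-eps-delta-apply} and \ref{lemma-proj-tang-cont}, by first upgrading the jump hypothesis (\ref{jumps-local-C}) to the $\epsilon$--$\delta$--jump condition and then invoking the analytic result of Lemma \ref{lemma-proj-tang-cont} pointwise on $O_{\mathcal{M}}$.

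First I would observe that $C_{\mathcal{M}} \subset \mathcal{M} \subset \overline{\mathcal{M}}$, so condition (\ref{jumps-local-C}) gives in particular
\begin{align*}
h + \gamma(h,x) \in \overline{\mathcal{M}} \quad \text{for $F$--almost all $x \in E$,} \quad \text{for all $h \in O_{\mathcal{M}}$.}
\end{align*}
This is exactly the hypothesis of Lemma \ref{lemma-eps-delta-apply}, and that lemma then yields that $\gamma$ satisfies the $\epsilon$--$\delta$--jump condition in every $h_0 \in O_{\mathcal{M}}$.

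Next I would apply the proof of Lemma \ref{lemma-proj-tang-cont} locally at each $h_0 \in O_{\mathcal{M}}$: using Lemma \ref{lemma-MN} we pick a parametrization $\phi : V \rightarrow U \cap \mathcal{M}$ around $h_0$ with $C_b^2$-extensions of $\phi$ and $\Phi$; Lemma \ref{lemma-Taylor-para} provides a radius $\delta>0$, a set $B \in \mathcal{E}$ with $F(B^c)<\infty$, and a square-integrable bound $\rho$ giving
\begin{align*}
\| \Pi_{(T_h \mathcal{M})^{\perp}} \gamma(h,x) \| \leq \rho(x)^2 \quad \text{for $F$--a.a. $x \in B$, for all $h \in B_{\delta}(h_0) \cap \mathcal{M}$,}
\end{align*}
since the orthogonal projection on $T_h\mathcal{M}$ is a nearest-point map and $D\phi(y)(\Phi(h+\gamma(h,x))-\Phi(h)) \in T_h\mathcal{M}$. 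Combined with Lemma \ref{lemma-gamma-Bc-cont}, which handles the $B^c$-part via the linear growth bound $\rho_n$, this gives both integrability and, by Lebesgue's dominated convergence theorem together with the continuity of $h \mapsto \Pi_{(T_h\mathcal{M})^{\perp}}\gamma(h,x)$, continuity of the map in a neighbourhood of $h_0$ in $\mathcal{M}$.

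Finally, since $O_{\mathcal{M}}$ is open in $\mathcal{M}$ and $h_0 \in O_{\mathcal{M}}$ was arbitrary, the pointwise finiteness and neighbourhood continuity assemble into the two claims of the corollary. No step poses a real obstacle; the only point requiring attention is to note that we need the jump condition (\ref{jumps-local-C}) solely to enter the hypothesis of Lemma \ref{lemma-eps-delta-apply}, and that the quadratic bound from Taylor's theorem in Lemma \ref{lemma-Taylor-para} is precisely what converts the merely square-integrable bound $\rho_n$ on $\gamma$ into the integrable bound $\rho^2$ on $\Pi_{(T_h\mathcal{M})^{\perp}}\gamma$.
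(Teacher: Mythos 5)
Your proposal is correct and follows the paper's own route: the paper proves the corollary exactly by combining Lemma \ref{lemma-eps-delta-apply} (upgrading (\ref{jumps-local-C}), via $C_{\mathcal{M}} \subset \overline{\mathcal{M}}$, to the $\epsilon$--$\delta$--jump condition on $O_{\mathcal{M}}$) with Lemma \ref{lemma-proj-tang-cont}. Your unwinding of the latter lemma (Lemma \ref{lemma-MN}, the Taylor bound of Lemma \ref{lemma-Taylor-para}, the nearest-point property of the projection, Lemma \ref{lemma-gamma-Bc-cont} for the $B^c$-part, and dominated convergence) is precisely the argument the paper uses there, so nothing is missing.
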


\begin{proof}
This is a direct consequence of Lemmas~\ref{lemma-eps-delta-apply} and \ref{lemma-proj-tang-cont}.
\end{proof}

\begin{lemma}\label{lemma-inner-prod-two}
For every $h \in \partial \mathcal{M}$ there exists a unique number $\lambda > 0$ such that
\begin{align}\label{inner-prod-id-1}
\langle \xi_z, D\psi(y) v \rangle = \lambda \langle \eta_h, D\phi(y) v \rangle \quad \text{for all $v \in \mathbb{R}^m$,}
\end{align}
where $y = \phi^{-1}(h)$ and $z = \psi(y)$. Moreover, we have
\begin{align}\label{inner-prod-id-2}
\langle \xi_z, Dg(h)w \rangle = \lambda \langle \eta_h, w \rangle \quad \text{for all $w \in T_h \mathcal{M}$.}
\end{align}
\end{lemma}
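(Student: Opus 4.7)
The plan is to reduce the statement to a double application of Lemma \ref{lemma-repr}. Since $\phi$ and $\psi$ are parametrizations of the full submanifolds $\mathcal{M}$ and $\mathcal{N}$, and since $C^3$-diffeomorphisms map boundary to boundary (Lemma \ref{lemma-diffeo-boundary}), for $h \in \partial \mathcal{M}$ the point $y = \phi^{-1}(h)$ lies in $\partial V$, and consequently $z = \psi(y) \in \partial \mathcal{N}$. Thus both $\phi$ around $h \in \partial \mathcal{M}$ and $\psi$ around $z \in \partial \mathcal{N}$ satisfy the hypotheses of Lemma \ref{lemma-repr}, yielding unique constants $\lambda_{1},\lambda_{2} > 0$ with
\begin{align*}
\langle \eta_{h}, D\phi(y) v \rangle = \lambda_{1} \langle e_{1}, v \rangle, \qquad \langle \xi_{z}, D\psi(y) v \rangle = \lambda_{2} \langle e_{1}, v \rangle \quad \text{for all } v \in \mathbb{R}^{m}.
\end{align*}

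Setting $\lambda := \lambda_{2}/\lambda_{1} > 0$ and eliminating $\langle e_{1}, v \rangle$ between the two identities gives (\ref{inner-prod-id-1}). Uniqueness of $\lambda$ is immediate: testing (\ref{inner-prod-id-1}) against $v = e_{1}$ yields $\lambda_{2} = \lambda \lambda_{1}$, and since $\lambda_{1} > 0$ the value of $\lambda$ is pinned down. Equivalently, any two candidates $\lambda, \lambda'$ would force $(\lambda - \lambda')\langle \eta_{h}, D\phi(y) e_{1}\rangle = 0$, and the inner product on the right is $\lambda_{1} > 0$.

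For (\ref{inner-prod-id-2}), given $w \in T_{h}\mathcal{M}$ the Definition (\ref{def-t-space}) of the tangent space produces $v \in \mathbb{R}^{m}$ with $w = D\phi(y) v$, and Lemma \ref{lemma-inv-special} identifies $v = D\phi(y)^{-1} w = D\Phi(h) w$. Since $g = \psi \circ \Phi$, the chain rule gives
\begin{align*}
Dg(h) w = D\psi(y)\, D\Phi(h) w = D\psi(y) v,
\end{align*}
so that $\langle \xi_{z}, Dg(h) w \rangle = \langle \xi_{z}, D\psi(y) v \rangle = \lambda \langle \eta_{h}, D\phi(y) v \rangle = \lambda \langle \eta_{h}, w \rangle$, proving (\ref{inner-prod-id-2}).

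There is no real obstacle: the entire argument is a bookkeeping exercise on top of Lemma \ref{lemma-repr} and Lemma \ref{lemma-inv-special}. The only mildly delicate point is observing that $y \in \partial V$ whenever $h \in \partial \mathcal{M}$, so that Lemma \ref{lemma-repr} is applicable to both $\phi$ and $\psi$ simultaneously at the same parameter point $y$; this is exactly what guarantees the \emph{same} vector $v$ transports between the two inner-product identities and makes the ratio $\lambda$ well-defined and parameter-free.
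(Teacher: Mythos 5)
Your proposal is correct and follows essentially the same route as the paper: identity (\ref{inner-prod-id-1}) is obtained by applying Lemma \ref{lemma-repr} to both parametrizations $\phi$ and $\psi$ at the common boundary parameter $y \in \partial V$ and taking the ratio of the resulting constants, and (\ref{inner-prod-id-2}) then follows from Lemma \ref{lemma-inv-special} together with the chain rule for $g = \psi \circ \Phi$. Your added remarks on uniqueness of $\lambda$ and on $y \in \partial V$, $z \in \partial \mathcal{N}$ simply make explicit what the paper leaves implicit.
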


\begin{proof}
Identity (\ref{inner-prod-id-1}) is a direct consequence of Lemma~\ref{lemma-repr}. Using Lemma~\ref{lemma-inv-special} and (\ref{inner-prod-id-1}), for all $w \in T_h \mathcal{M}$ we obtain
\begin{align*}
\langle \xi_z, Dg(h)w \rangle &= \langle \xi_z, D(\psi \circ \Phi)(h)w \rangle = \langle \xi_z, D\psi(y) D\Phi(h)w \rangle
\\ &= \langle \xi_z, D\psi(y) D\phi^{-1}(y)w \rangle = \lambda \langle \eta_h, w \rangle,
\end{align*}
which proves (\ref{inner-prod-id-2}).
\end{proof}

\begin{lemma}\label{lemma-imply-2}
Suppose that (\ref{c-f-star-gamma}), (\ref{jumps-local-C}) are satisfied. Then condition (\ref{manifold-boundary-local-C}) implies (\ref{manifold-boundary-local-D}).
\end{lemma}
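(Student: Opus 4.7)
The plan is to fix $z \in O_{\mathcal{N}} \cap \partial \mathcal{N}$ and set $h := f(z)$. Since $O_{\mathcal{N}} = g(O_{\mathcal{M}})$ and the parametrizations satisfy $\phi(\partial V) = \partial \mathcal{M}$, $\psi(\partial V) = \partial \mathcal{N}$ by Definition \ref{def-submanifold}, the map $f = \phi \circ \psi^{-1}$ sends $O_{\mathcal{N}} \cap \partial \mathcal{N}$ bijectively onto $O_{\mathcal{M}} \cap \partial \mathcal{M}$, so $h \in O_{\mathcal{M}} \cap \partial \mathcal{M}$. By hypothesis (\ref{c-f-star-gamma}), for $F$-almost every $x \in E$,
\[
c(z,x) = g(h + \gamma(h,x)) - g(h).
\]

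Next I would apply Taylor's theorem to the extension $g \in C_b^3(H;G)$ to obtain $c(z,x) = Dg(h)\gamma(h,x) + R(h,x)$ with remainder bounded by $\|R(h,x)\| \leq \tfrac{1}{2} \|D^2 g\|_{\infty} \|\gamma(h,x)\|^2$. Splitting the linear term via the orthogonal decomposition $\gamma(h,x) = \Pi_{T_h \mathcal{M}} \gamma(h,x) + \Pi_{(T_h \mathcal{M})^{\perp}} \gamma(h,x)$ and taking inner product with $\xi_z$ yields three summands. For the tangential one, since $\Pi_{T_h\mathcal{M}}\gamma(h,x) \in T_h\mathcal{M}$, Lemma \ref{lemma-inner-prod-two} supplies some $\lambda > 0$ such that
\[
\langle \xi_z, Dg(h)\Pi_{T_h\mathcal{M}}\gamma(h,x) \rangle = \lambda \langle \eta_h, \Pi_{T_h\mathcal{M}}\gamma(h,x) \rangle = \lambda \langle \eta_h, \gamma(h,x) \rangle,
\]
the second equality being valid because $\eta_h \in T_h \mathcal{M}$ is orthogonal to $\Pi_{(T_h\mathcal{M})^{\perp}}\gamma(h,x)$.

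Integrability of the three pieces then follows from tools already established. The first lies in $L^1(F)$ directly by hypothesis (\ref{manifold-boundary-local-C}). The second, bounded in absolute value by $\|Dg\|_{\infty} \|\Pi_{(T_h\mathcal{M})^{\perp}}\gamma(h,x)\|$ via Cauchy-Schwarz, lies in $L^1(F)$ by Corollary \ref{cor-proj-tang-cont}, whose hypothesis is available because (\ref{jumps-local-C}) delivers, through Lemma \ref{lemma-eps-delta-apply}, the $\epsilon$-$\delta$-jump condition on $O_{\mathcal{M}}$. For the remainder, choosing $n \in \mathbb{N}$ with $\|h\| \leq n$ and invoking (\ref{linear-growth-rho}) yields $\|R(h,x)\| \leq \tfrac{1}{2}\|D^2 g\|_{\infty} \rho_n(x)^2$, which is $F$-integrable by (\ref{rho-square-int}). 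The delicate point is the middle summand: hypothesis (\ref{manifold-boundary-local-C}) only controls the component of $\gamma(h,x)$ along $\eta_h$, and it is precisely the second-order Taylor estimate underlying Corollary \ref{cor-proj-tang-cont} (via Lemma \ref{lemma-Taylor-para}) that is needed to dispose of the orthogonal component on which (\ref{manifold-boundary-local-C}) provides no information.
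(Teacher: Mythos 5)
Your proof is correct, and it takes a route that differs in structure from the paper's. The paper proves this lemma by applying the equivalence of Lemma \ref{lemma-eta-int-fin} twice: once on the $\mathcal{M}$-side to replace $\langle \eta_h,\gamma(h,x)\rangle$ by the linearized quantity $\langle \eta_h, D\phi(y)(\Phi(h+\gamma(h,x))-\Phi(h))\rangle$, then the chart-level identity (\ref{inner-prod-id-1}) of Lemma \ref{lemma-inner-prod-two} to convert this into $\langle \xi_z, D\psi(y)(\Psi(z+c(z,x))-\Psi(z))\rangle$ (using (\ref{repr-c})), and finally Lemma \ref{lemma-eta-int-fin} again on the $\mathcal{N}$-side — for which it first needs Lemma \ref{lemma-imply-1} to know that $c$ satisfies the jump condition (\ref{jumps-local-D}). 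You instead expand $g$ to second order directly, writing $c(z,x)=Dg(h)\gamma(h,x)+R(h,x)$, split $\gamma(h,x)$ into its tangential and normal components, handle the tangential part with identity (\ref{inner-prod-id-2}) (the same $\lambda>0$ proportionality, just phrased on $T_h\mathcal{M}$ rather than through the chart), control the normal part via Corollary \ref{cor-proj-tang-cont} (whose hypothesis is exactly (\ref{jumps-local-C})), and control the remainder by $\rho_n(x)^2$ via (\ref{linear-growth-rho}) and (\ref{rho-square-int}). Both arguments rest on the same two pillars — quadratic Taylor control of the nonlinearity and the proportionality of normal components under the chart maps — but your version avoids the detour through Lemma \ref{lemma-imply-1} and the second application of Lemma \ref{lemma-eta-int-fin}, at the cost of repeating in-line the Taylor and projection estimates that the paper has packaged into Lemmas \ref{lemma-Taylor-para} and \ref{lemma-eta-int-fin}; your closing observation that (\ref{manifold-boundary-local-C}) gives no control of the component of $\gamma(h,x)$ orthogonal to $T_h\mathcal{M}$, so that the second-order estimate behind Corollary \ref{cor-proj-tang-cont} is genuinely needed there, correctly identifies where the geometric input enters.
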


\begin{proof}
According to Lemma~\ref{lemma-imply-1}, condition (\ref{jumps-local-D}) is satisfied, too. Let $z \in O_{\mathcal{N}} \cap \partial \mathcal{N}$ be arbitrary. We set $h := f(z) \in O_{\mathcal{M}} \cap \partial \mathcal{M}$ and $y := \phi^{-1}(h) \in O_V \cap \partial V$.
By Lemma~\ref{lemma-eta-int-fin} we have
\begin{align*}
\int_E |\langle \eta_h,D\phi(y)(\Phi(h + \gamma(h,x)) - \Phi(h)) \rangle| F(dx) < \infty.
\end{align*}
Using (\ref{repr-c}) and Lemma~\ref{lemma-inner-prod-two}, for some $\lambda > 0$ we obtain
\begin{align*}
&\int_E | \langle \xi_z, D\psi(y) (\Psi(z + c(z,x)) - \Psi(z)) \rangle | F(dx)
\\ &= \int_E | \langle \xi_z, D\psi(y) (\Psi(g(h + \gamma(h,x))) - \Psi(z)) \rangle | F(dx)
\\ &= \int_E | \langle \xi_z, D\psi(y) (\Phi(h + \gamma(h,x)) - \Phi(h)) \rangle | F(dx)
\\ &= \lambda \int_E | \langle \eta_h, D\phi(y) (\Phi(h + \gamma(h,x)) - \Phi(h)) \rangle | F(dx) < \infty.
\end{align*}
Applying Lemma~\ref{lemma-eta-int-fin} yields condition (\ref{manifold-boundary-local-D}).
\end{proof}

\begin{lemma}\label{lemma-decomposition}
Suppose that (\ref{b-f-star-sigma}), (\ref{sigma-tangent-local-C-1}) are satisfied and let $j \in \mathbb{N}$ be arbitrary. For each $z \in O_{\mathcal{N}}$ we have the decomposition
\begin{align*}
Db^j(z)b^j(z) = Dg(h) ( D\sigma^j(h)\sigma^j(h) ) + D^2 g(h) (\sigma^j(h),\sigma^j(h)),
\end{align*}
where $h = f(z) \in O_{\mathcal{M}}$.
\end{lemma}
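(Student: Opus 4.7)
The plan is to compute $Db^j(z)b^j(z)$ directly by differentiating the pull-back identity (\ref{repr-b-j}) along a curve in $\mathcal{N}$ through $z$ whose initial velocity is $b^j(z)$, and to reduce the resulting expression using the tangency assumption (\ref{sigma-tangent-local-C-1}).

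The first step is to observe that, by Lemma \ref{lemma-imply-0}, the hypotheses (\ref{b-f-star-sigma}) and (\ref{sigma-tangent-local-C-1}) together imply $b^j(z) \in T_z\mathcal{N}$. I can therefore choose a $C^1$-curve $c \colon (-\varepsilon,\varepsilon) \to \mathcal{N}$ with $c(0) = z$ and $c'(0) = b^j(z)$; along this curve, formula (\ref{repr-b-j}) yields $b^j(c(t)) = Dg(f(c(t)))\,\sigma^j(f(c(t)))$ for $|t|<\varepsilon$. Differentiating at $t=0$ by the product and chain rules, which is legitimate because $g \in C_b^3(H;G)$, $f \in C_b^3(G;H)$, and $\sigma^j \in C^1(H)$, produces
$$Db^j(z)b^j(z) = D^2 g(h)\bigl(Df(z)b^j(z),\,\sigma^j(h)\bigr) + Dg(h)\,D\sigma^j(h)\,Df(z)b^j(z).$$
The identity to be proved therefore reduces to $Df(z)b^j(z) = \sigma^j(h)$, which, in view of $b^j(z) = Dg(h)\sigma^j(h)$, is the assertion that $Df(z)Dg(h)$ restricts to the identity on $T_h\mathcal{M}$.

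The main point is exactly this last identity, and I would establish it by exploiting the factorisations $f = \phi \circ \Psi$ and $g = \psi \circ \Phi$: since $\Psi \circ \psi = \mathrm{id}_V$ on $V$, the chain rule gives $Df(z)\,Dg(h) = D\phi(y)\,D\Phi(h)$ with $y = \phi^{-1}(h)$, and Lemma \ref{lemma-inv-special} yields $D\Phi(h)w = D\phi(y)^{-1}w$ for every $w \in T_h\mathcal{M}$. Combining this with $\sigma^j(h) \in T_h\mathcal{M}$ from (\ref{sigma-tangent-local-C-1}) delivers $Df(z)b^j(z) = \sigma^j(h)$ and, after substitution, the claimed symmetric decomposition. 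The only delicate issue is justifying the derivative identity $Df(z)\,Dg(h)|_{T_h\mathcal{M}} = \mathrm{Id}$ at boundary points, but this is taken care of by the factorisations through the parametrisations, where $\Psi \circ \psi = \mathrm{id}_V$ holds on all of $V$, including $\partial V$; everything else reduces to a routine chain-rule calculation.
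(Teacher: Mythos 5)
Your route is genuinely more direct than the paper's: instead of introducing $\theta^j(y) = D\phi(y)^{-1}\sigma^j(\phi(y))$, applying Lemma \ref{lemma-decomp-pre} twice (to $\sigma^j$ via $\phi$ and to $b^j$ via $\psi$) and then untangling a second-order chain rule for $\psi = g \circ \phi$, you differentiate the pulled-back identity (\ref{repr-b-j}) once along a curve with velocity $b^j(z)$ and collapse $Df(z)Dg(h)$ on $T_h\mathcal{M}$ to the identity. That last step is exactly the paper's Lemma \ref{lemma-tang-inverse}, and your derivation of it via Lemma \ref{lemma-inv-special} is correct, as is the product/chain-rule computation and the reduction to $Df(z)b^j(z) = \sigma^j(h)$. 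You also implicitly need the curve to stay in $O_{\mathcal{N}}$ so that (\ref{repr-b-j}) holds along it, which is fine for small $t$ since $O_{\mathcal{N}}$ is open in $\mathcal{N}$.

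There is, however, one step that fails as written: the existence of a two-sided $C^1$-curve $c\colon(-\varepsilon,\varepsilon)\to\mathcal{N}$ with $c(0)=z$ and $c'(0)=b^j(z)$. The lemma assumes only (\ref{sigma-tangent-local-C-1}), not the boundary tangency (\ref{sigma-tangent-local-C-2}), so at a point $z \in O_{\mathcal{N}}\cap\partial\mathcal{N}$ the vector $b^j(z)\in T_z\mathcal{N}$ may have a nonzero component transverse to $\partial\mathcal{N}$; pulling back by $\psi$, any curve with this velocity would have negative first coordinate on one side of $t=0$ and thus leave $\mathbb{R}_+^m$, so no such two-sided curve in $\mathcal{N}$ exists. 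The delicate boundary issue is here, not in the identity $\Psi\circ\psi = \mathrm{id}_V$ which you flagged instead. The repair is precisely the device of the paper's Lemma \ref{lemma-decomp-pre}: set $v := D\psi(y)^{-1}b^j(z)$, choose $\Lambda\in\{-1,1\}$ with $y + \Lambda t v \in V$ for small $t\geq 0$, and use the one-sided curve $c(t) := \psi(y+\Lambda t v)$, $t\in[0,\varepsilon)$; both one-sided derivatives then carry the same factor $\Lambda$, which cancels. With this modification your argument is complete and yields a shorter proof than the paper's.
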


\begin{proof}
According to Lemma~\ref{lemma-imply-0}, condition (\ref{sigma-tangent-local-D-1}) is satisfied, too. Note that, by Lemma~\ref{lemma-inv-special} and (\ref{repr-b-j}), for all $y \in O_V$ we have
\begin{align*}
D \phi(y)^{-1} \sigma^j(h) &= D \Phi(h) \sigma^j(h) = D (\Psi \circ g)(h) \sigma^j(h) = D \Psi(z)Dg(h) \sigma^j(h)
\\ &= D\Psi(z) b^j(z) = D \psi(y)^{-1} b^j(z),
\end{align*}
where $h := \phi(y) \in O_\mathcal{M}$ and $z := \psi(y) \in O_{\mathcal{N}}$. We define the mapping
\begin{align*}
\theta^j : O_V \rightarrow \mathbb{R}^m, \quad \theta^j(y) := D \phi(y)^{-1} \sigma^j(h), \quad \text{where $h := \phi(y)$.}
\end{align*}
Let $z \in O_{\mathcal{N}}$ be arbitrary. We set $h := f(z) \in O_{\mathcal{M}}$ and $y := \phi^{-1}(h) \in O_V$. Using Lemma~\ref{lemma-decomp-pre} we obtain the decompositions
\begin{align}\label{dec-sigma}
D \sigma^j(h) \sigma^j(h) &= D\phi(y)(D \theta^j(y) \theta^j(y)) + D^2 \phi(y)(\theta^j(y),\theta^j(y)),
\\ \label{dec-b} D b^j(z) b^j(z) &= D\psi(y)(D \theta^j(y) \theta^j(y)) + D^2 \psi(y)(\theta^j(y),\theta^j(y)).
\end{align}
Note that we have
\begin{align}\label{decomp-pr-1}
D\psi(y)(D \theta^j(y) \theta^j(y)) = D(g \circ \phi)(y)(D \theta^j(y) \theta^j(y)) = Dg(h) D\phi(y) (D \theta^j(y) \theta^j(y)).
\end{align}
By the second order chain rule we obtain
\begin{equation}\label{decomp-pr-2}
\begin{aligned}
&D^2 \psi(y)(\theta^j(y),\theta^j(y)) = D^2(g \circ \phi)(y)(\theta^j(y),\theta^j(y))
\\ &= D^2 g(h)(D\phi(y) \theta^j(y),D\phi(y) \theta^j(y)) + Dg(h) (D^2 \phi(y) (\theta^j(y),\theta^j(y)))
\\ &= D^2 g(h)(\sigma^j(h),\sigma^j(h)) + Dg(h) (D^2 \phi(y) (\theta^j(y),\theta^j(y))).
\end{aligned}
\end{equation}
Moreover, by (\ref{dec-sigma}) we have
\begin{equation}\label{decomp-pr-3}
\begin{aligned}
&Dg(h) (D^2 \phi(y) (\theta^j(y),\theta^j(y)))
\\ &= Dg(h)(D\sigma^j(h)D\sigma^j(h)) - Dg(h)D\phi(y)(D\theta^j(y)\theta^j(y)).
\end{aligned}
\end{equation}
Inserting (\ref{decomp-pr-1})--(\ref{decomp-pr-3}) into (\ref{dec-b}) we arrive at
\begin{align*}
Db^j(z)b^j(z) &= Dg(h) D\phi(y) (D \theta^j(y) \theta^j(y)) + D^2 g(h)(\sigma^j(h),\sigma^j(h)) \\ &\quad + Dg(h) (D^2 \phi(y) (\theta^j(y),\theta^j(y)))
\\ &= Dg(h) D\phi(y) (D \theta^j(y) \theta^j(y)) + D^2 g(h)(\sigma^j(h),\sigma^j(h))
\\ &\quad + Dg(h)(D\sigma^j(h)\sigma^j(h)) - Dg(h) D\phi(y) (D \theta^j(y) \theta^j(y))
\\ &= Dg(h) ( D\sigma^j(h)\sigma^j(h) ) + D^2 g(h) (\sigma^j(h),\sigma^j(h)),
\end{align*}
completing the proof.
\end{proof}

\begin{lemma}\label{lemma-imply-3}
Suppose that (\ref{a-f-star-beta})--(\ref{c-f-star-gamma}) and (\ref{sigma-tangent-local-C-1}), (\ref{jumps-local-C}) are satisfied. Then condition (\ref{alpha-tangent-local-C}) implies (\ref{alpha-tangent-local-D}).
\end{lemma}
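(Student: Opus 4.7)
Fix $z \in O_{\mathcal{N}}$ and set $h := f(z) \in O_{\mathcal{M}}$. By Lemma \ref{lemma-imply-1}, condition (\ref{jumps-local-D}) holds, so Corollary \ref{cor-proj-tang-cont} applies to both $(\gamma,\mathcal{M})$ and $(c,\mathcal{N})$; in particular all projection integrals appearing below are absolutely convergent. Let
\begin{align*}
X := \beta(h) - \tfrac{1}{2} \sum_{j \in \mathbb{N}} D\sigma^j(h)\sigma^j(h) - \int_E \Pi_{(T_h\mathcal{M})^\perp}\gamma(h,x) F(dx),
\end{align*}
which lies in $T_h\mathcal{M}$ by hypothesis (\ref{alpha-tangent-local-C}). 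By Lemma \ref{lemma-tang-pull-back} we have $Dg(h)X \in T_z\mathcal{N}$. Hence it suffices to show that
\begin{align*}
R(z) := a(z) - \tfrac{1}{2}\sum_{j \in \mathbb{N}} Db^j(z) b^j(z) - \int_E \Pi_{(T_z\mathcal{N})^\perp} c(z,x) F(dx) - Dg(h) X \in T_z\mathcal{N}.
\end{align*}

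The plan is a direct computation using the representations (\ref{repr-a}), (\ref{repr-b-j}), (\ref{repr-c}) together with Lemma \ref{lemma-decomposition}. Substituting them into $R(z)$, the drift term $Dg(h)\beta(h)$ cancels against $Dg(h)\beta(h)$ from $Dg(h)X$, the Stratonovich correction cancels against the $\sum_j Dg(h)(D\sigma^j\sigma^j)$ contribution coming from Lemma \ref{lemma-decomposition}, and the $\frac{1}{2} D^2 g(\sigma^j,\sigma^j)$ terms cancel pairwise. Writing $\gamma(h,x) = \Pi_{T_h\mathcal{M}}\gamma(h,x) + \Pi_{(T_h\mathcal{M})^\perp}\gamma(h,x)$ and pulling $Dg(h)$ inside the absolutely convergent projection integral, I expect to obtain
\begin{align*}
R(z) = \int_E \big( \Pi_{T_z\mathcal{N}} c(z,x) - Dg(h) \Pi_{T_h\mathcal{M}} \gamma(h,x) \big) F(dx).
\end{align*}

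To conclude, two things must be verified. First, integrability: since $Dg(h)\Pi_{T_h\mathcal{M}}\gamma(h,x) \in T_z\mathcal{N}$ by Lemma \ref{lemma-tang-pull-back}, the integrand equals $\Pi_{T_z\mathcal{N}}[c(z,x) - Dg(h)\gamma(h,x)] + \Pi_{T_z\mathcal{N}} Dg(h)\Pi_{(T_h\mathcal{M})^\perp}\gamma(h,x)$, and hence it is bounded in norm by $\frac{1}{2}\|D^2 g\|_\infty \|\gamma(h,x)\|^2 + \|Dg\|_\infty \|\Pi_{(T_h\mathcal{M})^\perp}\gamma(h,x)\|$, which is $F$-integrable by (\ref{linear-growth-rho}), (\ref{rho-square-int}) and Corollary \ref{cor-proj-tang-cont}. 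Second, the integrand lies in the closed subspace $T_z\mathcal{N}$ of $G$: the first summand is already in $T_z\mathcal{N}$ by definition of the orthogonal projection, and the second by Lemma \ref{lemma-tang-pull-back}. Since $T_z\mathcal{N}$ is a closed convex cone, Lemma \ref{lemma-int-in-convex} yields $R(z) \in T_z\mathcal{N}$, and adding back $Dg(h)X \in T_z\mathcal{N}$ delivers (\ref{alpha-tangent-local-D}).

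The main obstacle is bookkeeping: one must carefully justify that even though $\int_E c(z,x) F(dx)$ and $\int_E Dg(h)\Pi_{T_h\mathcal{M}}\gamma(h,x) F(dx)$ need not converge individually, the differences that appear after the cancellations above do converge in Bochner sense. This is handled by re-grouping the integrals before splitting them, relying on the Taylor estimate $\|g(h+\gamma)-g(h)-Dg(h)\gamma\| \leq \frac{1}{2}\|D^2 g\|_\infty \|\gamma\|^2$ used implicitly in (\ref{repr-a}) and on the absolute convergence of the orthogonal projection integrals supplied by Corollary \ref{cor-proj-tang-cont}.
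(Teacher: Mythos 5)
Your proposal is correct and follows essentially the same route as the paper's proof: substitute the representations (\ref{repr-a}), (\ref{repr-b-j}), (\ref{repr-c}), cancel the second-order terms via Lemma \ref{lemma-decomposition}, split $\gamma$ into its $T_h\mathcal{M}$ and $(T_h\mathcal{M})^{\perp}$ components so that hypothesis (\ref{alpha-tangent-local-C}) and Lemma \ref{lemma-tang-pull-back} absorb one part into $T_z\mathcal{N}$, and observe that the remaining integrand lies in $T_z\mathcal{N}$. Your version merely makes the integrability bookkeeping (via Corollary \ref{cor-proj-tang-cont}, the Taylor bound and Lemma \ref{lemma-int-in-convex}) more explicit than the paper does, which is harmless.
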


\begin{proof}
According to Lemma~\ref{lemma-imply-1}, condition (\ref{jumps-local-D}) is satisfied, too. Let $z \in O_{\mathcal{N}}$ be arbitrary and set $h := f(z) \in O_{\mathcal{M}}$. By (\ref{repr-a}), (\ref{repr-c}) we obtain
\begin{align*}
&a(z) - \frac{1}{2} \sum_{j \in \mathbb{N}} Db^j(z)b^j(z) - \int_E \Pi_{(T_z \mathcal{N})^{\perp}} c(z,x) F(dx)
\\ &= Dg(h)\beta(h) + \frac{1}{2} \sum_{j \in \mathbb{N}} D^2 g(h)(\sigma^j(h),\sigma^j(h))
\\ &\quad + \int_E \big( g(h+\gamma(h,x)) - g(h) - Dg(h)\gamma(h,x) \big) F(dx)
\\ &\quad - \frac{1}{2} \sum_{j \in \mathbb{N}} Db^j(z)b^j(z)
- \int_E \Pi_{(T_z \mathcal{N})^{\perp}} \big( g(h+\gamma(h,x)) - g(h) \big) F(dx).
\end{align*}
Thus, by Lemma~\ref{lemma-decomposition}, relation (\ref{alpha-tangent-local-C})
and Lemma~\ref{lemma-tang-pull-back} we arrive at
\begin{align*}
&a(z) - \frac{1}{2} \sum_{j \in \mathbb{N}} Db^j(z)b^j(z) - \int_E \Pi_{(T_z \mathcal{N})^{\perp}} c(z,x) F(dx)
\\ &= Dg(h)\beta(h) - \frac{1}{2} \sum_{j \in \mathbb{N}} Dg(h) (D\sigma^j(h) \sigma^j(h))
\\ &\quad - \int_E \big( \Pi_{T_z \mathcal{N}} ( g(h + \gamma(h,x)) - g(h) ) + Dg(h) \gamma(h,x) \big) F(dx)
\\ &= Dg(h) \bigg( \beta(h) - \frac{1}{2} \sum_{j \in \mathbb{N}} D\sigma^j(h)\sigma^j(h) - \int_E \Pi_{(T_h \mathcal{M})^{\perp}} \gamma(h,x) F(dx) \bigg)
\\ &\quad - \int_E \big( \Pi_{T_z \mathcal{N}} ( g(h + \gamma(h,x)) - g(h) ) + Dg(h) \Pi_{T_h \mathcal{M}} \gamma(h,x) \big) F(dx) \in T_z \mathcal{N},
\end{align*}
proving that (\ref{alpha-tangent-local-D}) is fulfilled.
\end{proof}

\begin{proof}[Proof of Proposition~\ref{prop-cond-imply-cond}]
According to Lemmas~\ref{lemma-imply-0}, \ref{lemma-imply-1}, \ref{lemma-imply-2}, \ref{lemma-imply-3}, conditions (\ref{sigma-tangent-local-D-1})--(\ref{alpha-tangent-local-D}) are satisfied. Let $z \in O_{\mathcal{N}}$ be arbitrary and set $h := f(z) \in O_{\mathcal{M}}$. By (\ref{repr-a}), (\ref{repr-c}) and Lemma~\ref{lemma-decomposition} we obtain
\begin{align*}
&\langle \xi_z,a(z) \rangle - \frac{1}{2} \sum_{j \in \mathbb{N}} \langle \xi_z, D b^j(z) b^j(z) \rangle
- \int_{E} \langle \xi_z,c(z,x) \rangle F(dx)
\\ &= \Big\langle \xi_z, Dg(h)\beta(h) + \frac{1}{2} \sum_{j \in \mathbb{N}} D^2 g(h)(\sigma^j(h),\sigma^j(h))
\\ &\quad + \int_E \big( g(h+\gamma(h,x)) - g(h) - Dg(h)\gamma(h,x) \big) F(dx) \Big\rangle
\\ &\quad - \frac{1}{2} \sum_{j \in \mathbb{N}} \langle \xi_z, D b^j(z) b^j(z) \rangle
- \int_{E} \langle \xi_z,g(h+\gamma(h,x)) - g(h) \rangle F(dx)
\\ &= \langle \xi_z,Dg(h)\beta(h) \rangle - \frac{1}{2} \sum_{j \in \mathbb{N}} \langle \xi_z,Dg(h)(D\sigma^j(h)\sigma^j(h)) \rangle
\\ &\quad - \int_E \langle \xi_z,Dg(h)\gamma(h,x) \rangle F(dx).
\end{align*}
Taking into account Lemma~\ref{lemma-inner-prod-two}, by (\ref{alpha-tangent-local-C}), (\ref{alpha-tangent-plus-local-C}) for some $\lambda > 0$ we get
\begin{align*}
&\langle \xi_z,a(z) \rangle - \frac{1}{2} \sum_{j \in \mathbb{N}} \langle \xi_z, D b^j(z) b^j(z) \rangle
- \int_{E} \langle \xi_z,c(z,x) \rangle F(dx)
\\ &= \Big\langle \xi_z, Dg(h) \bigg( \beta(h) - \frac{1}{2} \sum_{j \in \mathbb{N}} D\sigma^j(h)\sigma^j(h) - \int_E \Pi_{(T_h \mathcal{M})^{\perp}} \gamma(h,x) F(dx) \bigg) \Big\rangle
\\ &\quad - \int_E \langle \xi_z,Dg(h) \Pi_{T_h \mathcal{M}} \gamma(h,x) \rangle F(dx)
\\ &= \lambda \Big\langle \eta_h, \beta(h) - \frac{1}{2} \sum_{j \in \mathbb{N}} D\sigma^j(h)\sigma^j(h) - \int_E \Pi_{(T_h \mathcal{M})^{\perp}} \gamma(h,x) F(dx) \Big\rangle
\\ &\quad - \lambda \int_E \langle \eta_h,\Pi_{T_h \mathcal{M}} \gamma(h,x) \rangle F(dx)
\\ &= \lambda \bigg( \langle \eta_h,\beta(h) \rangle - \frac{1}{2} \sum_{j \in \mathbb{N}} \langle \eta_h, D \sigma^j(h) \sigma^j(h) \rangle
- \int_{E} \langle \eta_h,\gamma(h,x) \rangle F(dx) \bigg) \geq 0,
\end{align*}
showing that (\ref{alpha-tangent-plus-local-D}) is satisfied.
\end{proof}

\begin{proposition}\label{prop-umr-general}
Suppose we have (\ref{a-f-star-beta})--(\ref{c-f-star-gamma}) and (\ref{sigma-tangent-local-C-1}), (\ref{jumps-local-C}), (\ref{alpha-tangent-local-C}).
Then the following conditions also hold true:
\begin{align}\label{beta-umg}
\beta(h) &= (g_{\lambda}^{\star} a)(h), \quad h \in O_{\mathcal{M}},
\\ \label{sigma-umg} \sigma^j(h) &= (g_W^{\star} b^j)(h), \quad j \in \mathbb{N} \text{ and } h \in O_{\mathcal{M}},
\\ \label{gamma-umg} \gamma(h,x) &= (g_{\mu}^{\star} c)(h,x) \quad \text{for $F$-almost all $x \in E$}, \quad \text{for all $h \in O_{\mathcal{M}}$.}
\end{align}
\end{proposition}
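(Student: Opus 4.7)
The plan is to establish the three identities (\ref{gamma-umg}), (\ref{sigma-umg}), (\ref{beta-umg}) in that order, exploiting that $f$ and $g$ are mutually inverse on the submanifold, i.e.\ $f \circ g|_{\mathcal{M}} = \mathrm{Id}_{\mathcal{M}}$. Throughout, fix $h \in O_{\mathcal{M}}$ and set $z := g(h) \in O_{\mathcal{N}}$, $y := \phi^{-1}(h) \in O_V$.

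I would first dispatch (\ref{gamma-umg}). By (\ref{c-f-star-gamma}), $c(z,x) = g(h + \gamma(h,x)) - g(h)$ for $F$-a.a.\ $x$, and by (\ref{jumps-local-C}) one has $h + \gamma(h,x) \in C_{\mathcal{M}}$, so $z + c(z,x) = g(h + \gamma(h,x)) \in C_{\mathcal{N}}$. Since $f \circ g$ is the identity on $\mathcal{M}$, $f(z + c(z,x)) - f(z) = \gamma(h,x)$, which is exactly $(g^{\star} c)(h,x)$. For (\ref{sigma-umg}): by (\ref{b-f-star-sigma}) and (\ref{sigma-tangent-local-C-1}), $b^j(z) = Dg(h)\sigma^j(h)$ with $\sigma^j(h) \in T_h\mathcal{M}$; differentiating $f \circ g|_{\mathcal{M}} = \mathrm{Id}_{\mathcal{M}}$ along any curve in $\mathcal{M}$ through $h$ with velocity $\sigma^j(h)$ yields $Df(z) Dg(h)\sigma^j(h) = \sigma^j(h)$, so $(g^{\star} b^j)(h) = Df(z) b^j(z) = \sigma^j(h)$.

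The real work is (\ref{beta-umg}). After substituting the formulas into the definition of $(g^{\star} a)(h)$ and using $f(z + c(z,x)) = h + \gamma(h,x)$, the second-order Taylor pieces $Df(z)[g(h+\gamma) - g(h) - Dg(h)\gamma]$ coming from $Df(z) a(z)$ cancel against the identical term appearing in the jump integral defining $(g^{\star} a)(h)$, leaving $(g^{\star} a)(h) = P_h\beta(h) + \tfrac{1}{2}\sum_{j}\bigl[Df(z) D^2 g(h)(\sigma^j(h),\sigma^j(h)) + D^2 f(z)(b^j(z), b^j(z))\bigr] + \int_E (I - P_h)\gamma(h,x)\, F(dx)$, where $P_h := Df(z)Dg(h)$. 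Writing $Df(z) = D\phi(y)D\Psi(z)$, $Dg(h) = D\psi(y)D\Phi(h)$ and using $\Psi \circ \psi = \mathrm{Id}_V$ gives $P_h = D\phi(y)D\Phi(h)$, which by Lemma \ref{lemma-inv-special} is a (non-orthogonal) projection of $H$ onto $T_h\mathcal{M}$. Two identities then close the argument: (a) for any $w \in H$, $(I - P_h)w = (I - P_h)\Pi_{(T_h\mathcal{M})^{\perp}} w$ (decompose $w$ into tangential and normal parts and use that $P_h$ fixes $T_h\mathcal{M}$), so by Corollary \ref{cor-proj-tang-cont} the above integral converges absolutely and equals $\int_E (I-P_h)\Pi_{(T_h\mathcal{M})^{\perp}}\gamma(h,x)\,F(dx)$; (b) applying Lemma \ref{lemma-decomp-pre} to both $(\phi,\sigma^j)$ and $(\psi,b^j)$ with the common vector $\theta^j(y) := D\phi(y)^{-1}\sigma^j(h) = D\psi(y)^{-1} b^j(z)$, together with the second-order chain rules for $\phi = f \circ \psi$ and $\psi = g \circ \phi$, yields $Df(z) D^2 g(h)(\sigma^j(h),\sigma^j(h)) + D^2 f(z)(b^j(z), b^j(z)) = (I - P_h)\,D\sigma^j(h)\sigma^j(h)$, since $D\phi(y)(D\theta^j(y)\theta^j(y)) \in T_h\mathcal{M}$ lies in the kernel of $I - P_h$.

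Plugging (a) and (b) into the expression above recasts $(g^{\star} a)(h)$ as $P_h\beta(h) + (I - P_h)\bigl[\tfrac12\sum_j D\sigma^j(h)\sigma^j(h) + \int_E\Pi_{(T_h\mathcal{M})^{\perp}}\gamma(h,x)F(dx)\bigr]$. The tangency condition (\ref{alpha-tangent-local-C}) asserts precisely that $\beta(h) - \tfrac12\sum_j D\sigma^j(h)\sigma^j(h) - \int_E\Pi_{(T_h\mathcal{M})^{\perp}}\gamma(h,x)F(dx) \in T_h\mathcal{M}$ and is therefore fixed by $P_h$; rearranging produces $(g^{\star} a)(h) = \beta(h)$. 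The main obstacle is the second-order identity (b), where one must juggle the two chain-rule relations $\phi = f \circ \psi$ and $\psi = g \circ \phi$, substitute each into the other to express $D^2\phi(y)(\theta^j,\theta^j)$ in terms of $D^2 f(z)(b^j,b^j)$, $Df(z)D^2 g(h)(\sigma^j,\sigma^j)$, and a $P_h$-fixed tangential piece, and then strip that tangential piece using $I - P_h$; a secondary subtlety is that individual integrals like $\int \gamma\,F(dx)$ may diverge, so all computations with jump integrals must be carried out in the combination $(I - P_h)\gamma$, whose integrability is guaranteed by the orthogonal-projection estimate of Corollary \ref{cor-proj-tang-cont}.
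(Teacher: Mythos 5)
Your proposal is correct and follows essentially the same route as the paper: (\ref{gamma-umg}) and (\ref{sigma-umg}) come from $f \circ g = \mathrm{Id}$ on $\mathcal{M}$ and $Df(z)Dg(h)w = w$ for $w \in T_h\mathcal{M}$ (the paper's Lemmas \ref{lemma-umr-2}, \ref{lemma-umr-1}, \ref{lemma-tang-inverse}), your identity (b) is exactly the paper's Lemma \ref{lemma-decomposition} applied twice (once for $g$, once for $f$), your handling of the jump integral through $(I-P_h)\gamma = (I-P_h)\Pi_{(T_h\mathcal{M})^{\perp}}\gamma$ matches the paper's tangential/orthogonal split, and the conclusion via (\ref{alpha-tangent-local-C}) is the same. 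The only difference is presentational: you make the oblique projection $P_h = Df(z)Dg(h)$ explicit, which streamlines but does not change the argument.
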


For the proof of Proposition~\ref{prop-umr-general} we prepare some auxiliary results. Note that for each $h \in O_{\mathcal{M}}$ we have
\begin{align}\label{repr-beta}
(g_{\lambda}^{\star} a)(h) &= Df(z)a(z) + \frac{1}{2} \sum_{j \in \mathbb{N}} D^2 f(z)(b^j(z),b^j(z))
\\ \notag &\quad + \int_E \big( f(z+c(z,x)) - f(z) - Df(z)c(z,x) \big) F(dx),
\\ \label{repr-sigma-j} (g_W^{\star} b^j)(h) &= Df(z)b^j(z) \quad \text{for all $j \in \mathbb{N}$,}
\\ \label{repr-gamma} (g_{\mu}^{\star} c)(h,x) &= f(z+c(z,x)) - f(z) \quad \text{for all $x \in E$,}
\end{align}
where $z = g(h) \in O_{\mathcal{N}}$.

\begin{lemma}\label{lemma-tang-inverse}
Let $h \in \mathcal{M}$ be arbitrary. Then we have
\begin{align*}
D f(z) Dg(h) w = w \quad \text{for all $w \in T_h \mathcal{M}$,}
\end{align*}
where $z = g(h) \in \mathcal{N}$.
\end{lemma}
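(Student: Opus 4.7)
The plan is to unwind both differentials along the parametrizations $\phi$ and $\psi$ using Lemma \ref{lemma-inv-special}, and then observe that the resulting linear maps on tangent spaces compose to the identity.

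Concretely, I would fix $h \in \mathcal{M}$, set $y := \phi^{-1}(h) \in V$ and $z := \psi(y) \in \mathcal{N}$, noting that $z = (\psi \circ \phi^{-1})(h) = g(h)$. Since $g = \psi \circ \Phi$ on $\mathcal{M}$, for an arbitrary $w \in T_h \mathcal{M}$ the chain rule gives $Dg(h)w = D\psi(y) D\Phi(h) w$, and Lemma \ref{lemma-inv-special} applied to the parametrization $\phi$ yields $D\Phi(h) w = D\phi(y)^{-1} w$, so that
\begin{align*}
Dg(h) w = D\psi(y) D\phi(y)^{-1} w.
\end{align*}

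Next, by Lemma \ref{lemma-tang-pull-back}, the vector $v := Dg(h)w$ lies in $T_z \mathcal{N}$, which is precisely the setting in which Lemma \ref{lemma-inv-special} can be applied a second time, now to the parametrization $\psi$. Combined with $f = \phi \circ \Psi$ and the chain rule, this gives $Df(z) v = D\phi(y) D\Psi(z) v = D\phi(y) D\psi(y)^{-1} v$. Composing the two identities,
\begin{align*}
Df(z) Dg(h) w = D\phi(y) D\psi(y)^{-1} D\psi(y) D\phi(y)^{-1} w = w,
\end{align*}
which is the desired equality.

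There is essentially no obstacle here: the result is a direct double application of Lemma \ref{lemma-inv-special}, with Lemma \ref{lemma-tang-pull-back} used only to ensure that the intermediate vector $Dg(h)w$ is tangent to $\mathcal{N}$ at $z$, so that the inverse rule $D\Psi(z)|_{T_z \mathcal{N}} = D\psi(y)^{-1}|_{T_z \mathcal{N}}$ is applicable. The only subtlety worth emphasizing in the write-up is that $Dg(h)$ and $Df(z)$ denote the derivatives of the \emph{extensions} $g \in C_b^3(H;G)$ and $f \in C_b^3(G;H)$, so that the identities from Lemma \ref{lemma-inv-special} are being used to invert $D\phi(y)$ and $D\psi(y)$ only on their respective tangent spaces, not on all of $H$ or $G$.
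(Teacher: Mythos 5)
Your proof is correct and follows essentially the same route as the paper: both express $Dg(h)$ and $Df(z)$ via the parametrizations using Lemma \ref{lemma-inv-special} and then cancel $D\psi(y)^{-1}D\psi(y)$ and $D\phi(y)D\phi(y)^{-1}$ on the relevant tangent spaces. Your extra citation of Lemma \ref{lemma-tang-pull-back} to justify that $Dg(h)w \in T_z\mathcal{N}$ is a harmless (indeed, slightly more explicit) way of stating what the paper leaves implicit, since $Dg(h)w = D\psi(y)(D\phi(y)^{-1}w)$ lies in $D\psi(y)\mathbb{R}^m = T_z\mathcal{N}$ by construction.
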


\begin{proof}
For $h \in \mathcal{M}$ we set $z := g(h) \in \mathcal{N}$ and $y := \phi^{-1}(h) \in V$. By Lemma~\ref{lemma-inv-special}, for all $w \in T_h \mathcal{M}$ we have
\begin{align*}
Df(z) Dg(h) w &= D(\phi \circ \Psi)(z) D(\psi \circ \Phi)(h) w = D \phi(y) D \Psi(z) D \psi(y) D \Phi(h) w
\\ &= D \phi(y) D \psi(y)^{-1} D \psi(y) D \phi(y)^{-1} w = w,
\end{align*}
which proves the claimed identity.
\end{proof}

\begin{lemma}\label{lemma-umr-1}
Conditions (\ref{b-f-star-sigma}), (\ref{sigma-tangent-local-C-1}) imply (\ref{sigma-umg}).
\end{lemma}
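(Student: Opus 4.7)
The plan is to verify the identity (\ref{sigma-umg}) pointwise by chasing the definitions and applying Lemma \ref{lemma-tang-inverse}. Fix an arbitrary $h \in O_{\mathcal{M}}$ and $j \in \mathbb{N}$, and set $z := g(h) \in O_{\mathcal{N}}$, so that $h = f(z)$.

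By the representation (\ref{repr-sigma-j}), which is just the definition of the pullback $g^{\star} b^j$ specialized to our setting, we have
\begin{align*}
(g^{\star} b^j)(h) = Df(z) b^j(z).
\end{align*}
On the other hand, the hypothesis (\ref{b-f-star-sigma}) together with the definition of $f^{\star} \sigma^j$ yields
\begin{align*}
b^j(z) = (f^{\star} \sigma^j)(z) = Dg(h) \sigma^j(h).
\end{align*}
Combining these two identities gives
\begin{align*}
(g^{\star} b^j)(h) = Df(z) Dg(h) \sigma^j(h).
\end{align*}

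Now the tangency hypothesis (\ref{sigma-tangent-local-C-1}) tells us that $\sigma^j(h) \in T_h \mathcal{M}$, so Lemma \ref{lemma-tang-inverse} applies and yields $Df(z) Dg(h) \sigma^j(h) = \sigma^j(h)$. Substituting this into the previous display gives $(g^{\star} b^j)(h) = \sigma^j(h)$, which is exactly (\ref{sigma-umg}). Since $h$ and $j$ were arbitrary, the proof is complete. No real obstacle arises here; the argument is a direct unwinding of definitions, with Lemma \ref{lemma-tang-inverse} playing the role of the key lemma that $Df(z)$ inverts $Dg(h)$ on tangent vectors to $\mathcal{M}$.
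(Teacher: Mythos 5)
Your argument is correct and coincides with the paper's own proof: both compute $(g^{\star} b^j)(h) = Df(z) b^j(z) = Df(z) Dg(h)\sigma^j(h)$ via (\ref{repr-sigma-j}) and (\ref{repr-b-j}), and then invoke (\ref{sigma-tangent-local-C-1}) with Lemma \ref{lemma-tang-inverse} to conclude $= \sigma^j(h)$. Nothing further is needed.
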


\begin{proof}
Let $h \in O_{\mathcal{M}}$ be arbitrary and set $z := g(h) \in O_{\mathcal{N}}$. By (\ref{repr-sigma-j}), (\ref{repr-b-j}), Lemma~\ref{lemma-tang-inverse} and (\ref{sigma-tangent-local-C-1}), for each $j \in \mathbb{N}$ we obtain
\begin{align*}
(g_W^{\star} b^j)(h) = Df(z) b^j(z) = Df(z) Dg(h) \sigma^j(h) = \sigma^j(h),
\end{align*}
proving that (\ref{sigma-umg}) is fulfilled.
\end{proof}

\begin{lemma}\label{lemma-umr-2}
Conditions (\ref{c-f-star-gamma}), (\ref{jumps-local-C}) imply (\ref{gamma-umg}).
\end{lemma}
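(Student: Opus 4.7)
The plan is to unwind the definition of $g^\star c$ using formula (\ref{repr-gamma}), together with the fact that $g : \mathcal{M} \to \mathcal{N}$ is the inverse of $f|_{\mathcal{N}}$, and to use condition (\ref{jumps-local-C}) precisely to guarantee that $h+\gamma(h,x)$ lies in the domain of $g$ so that this inverse relation applies.

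Concretely, I would fix $h \in O_{\mathcal{M}}$ and set $z := g(h) \in O_{\mathcal{N}}$. By (\ref{jumps-local-C}), for $F$--almost all $x \in E$ we have $h + \gamma(h,x) \in C_{\mathcal{M}} \subset \mathcal{M}$, so that $g(h+\gamma(h,x)) \in \mathcal{N}$ is defined and the identity $f(g(\cdot)) = \mathrm{id}_{\mathcal{M}}$ is available at the point $h+\gamma(h,x)$. Combining this with (\ref{c-f-star-gamma}) gives, for $F$--almost all $x \in E$,
\begin{align*}
z + c(z,x) = g(h) + \big(g(h+\gamma(h,x)) - g(h)\big) = g(h+\gamma(h,x)),
\end{align*}
and hence $f(z + c(z,x)) = h + \gamma(h,x)$. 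Plugging this into the representation (\ref{repr-gamma}) yields
\begin{align*}
(g^{\star} c)(h,x) = f(z + c(z,x)) - f(z) = (h + \gamma(h,x)) - h = \gamma(h,x),
\end{align*}
which is exactly (\ref{gamma-umg}).

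There is essentially no obstacle here: the only subtlety is making sure that the $F$--null exceptional set from (\ref{c-f-star-gamma}) and the one from (\ref{jumps-local-C}) are merged into a single $F$--null set before asserting the pointwise identity, and that one uses $C_{\mathcal{M}} \subset \mathcal{M}$ so that $g$ is genuinely defined on $h+\gamma(h,x)$. Everything else is a one-line algebraic substitution using $f \circ g = \mathrm{id}_{\mathcal{M}}$.
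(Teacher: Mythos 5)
Your proof is correct and follows essentially the same route as the paper's: both substitute the representation $c(z,x) = g(h+\gamma(h,x)) - g(h)$ coming from (\ref{c-f-star-gamma}) into (\ref{repr-gamma}) and use (\ref{jumps-local-C}) to guarantee $h+\gamma(h,x)\in C_{\mathcal{M}}\subset\mathcal{M}$ so that $f\circ g$ acts as the identity there. Your remark about merging the two $F$--null exceptional sets is a fair (implicit) point but introduces nothing beyond the paper's argument.
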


\begin{proof}
Let $h \in O_{\mathcal{M}}$ be arbitrary and set $z := g(h) \in O_{\mathcal{N}}$. By (\ref{repr-gamma}), (\ref{repr-c}) and (\ref{jumps-local-C}), for $F$-almost all $x \in E$ we obtain
\begin{align*}
(g_{\mu}^{\star} c)(h,x) &= f(z + c(z,x)) - f(z) = f(z + g(h + \gamma(h,x)) - g(h)) - f(z)
\\ &= f(g(h + \gamma(h,x))) - h = \gamma(h,x),
\end{align*}
showing that (\ref{gamma-umg}) is satisfied.
\end{proof}

\begin{proof}[Proof of Proposition~\ref{prop-umr-general}]
By Lemmas~\ref{lemma-umr-1}, \ref{lemma-umr-2}, conditions (\ref{sigma-umg}), (\ref{gamma-umg}) are satisfied. Let $h \in O_{\mathcal{M}}$ be arbitrary and set $z := g(h) \in O_{\mathcal{N}}$. By (\ref{repr-beta}), (\ref{repr-a}) we obtain
\begin{align*}
(g_{\lambda}^{\star} a)(h) &= Df(z) a(z) + \frac{1}{2} \sum_{j \in \mathbb{N}} D^2 f(z)(b^j(z),b^j(z))
\\ &\quad + \int_E \big( f(z + c(z,x)) - f(z) - Df(z)c(z,x) \big) F(dx)
\\ &= Df(z) \bigg( Dg(h)\beta(h) + \frac{1}{2} \sum_{j \in \mathbb{N}} D^2 g(h) (\sigma^j(h),\sigma^j(h))
\\ &\quad + \int_E \big( g(h+\gamma(h,x)) - g(h) - Dg(h)\gamma(h,x) \big) F(dx) \bigg)
\\ &\quad + \frac{1}{2} \sum_{j \in \mathbb{N}} D^2 f(z)(b^j(z),b^j(z))
\\ &\quad + \int_E \big( f(z + c(z,x)) - f(z) - Df(z)c(z,x) \big) F(dx).
\end{align*}
By (\ref{sigma-tangent-local-C-1}) and Lemma~\ref{lemma-tang-pull-back}, condition (\ref{sigma-tangent-local-D-1}) is satisfied, too. Hence, applying Lemma~\ref{lemma-decomposition} two times, by taking into account (\ref{b-f-star-sigma}), (\ref{sigma-tangent-local-C-1}) and (\ref{sigma-umg}), (\ref{sigma-tangent-local-D-1}), and using (\ref{repr-c}) as well as (\ref{gamma-umg}), (\ref{repr-gamma}) we get
\begin{align*}
(g_{\lambda}^{\star} a)(h) &= Df(z) \bigg( Dg(h)\beta(h) + \frac{1}{2} \sum_{j \in \mathbb{N}} \big( Db^j(z)b^j(z) - Dg(h)(D \sigma^j(h) \sigma^j(h)) \big)
\\ &\quad + \int_E \big( g(h+\gamma(h,x)) - g(h) - Dg(h)\gamma(h,x) \big) F(dx) \bigg)
\\ &\quad + \frac{1}{2} \sum_{j \in \mathbb{N}} \big( D\sigma^j(h)\sigma^j(h) - Df(z)(Db^j(z)b^j(z)) \big)
\\ &\quad + \int_E \big( \gamma(h,x) - Df(z)( g(h+\gamma(h,x)) - g(h) ) \big) F(dx)
\\ &= Df(z) Dg(h) \bigg( \beta(h) - \frac{1}{2} \sum_{j \in \mathbb{N}} D\sigma^j(h)\sigma^j(h) \bigg) + \frac{1}{2} \sum_{j \in \mathbb{N}} D\sigma^j(h)\sigma^j(h)
\\ &\quad + \int_E \big( \gamma(h,x) - Df(z) Dg(h) \gamma(h,x) \big) F(dx).
\end{align*}
Using (\ref{alpha-tangent-local-C}), by Lemma~\ref{lemma-tang-inverse} we obtain
\begin{align*}
(g_{\lambda}^{\star} a)(h) &= Df(z) Dg(h) \bigg( \beta(h) - \frac{1}{2} \sum_{j \in \mathbb{N}} D\sigma^j(h)\sigma^j(h) \bigg) + \frac{1}{2} \sum_{j \in \mathbb{N}} D\sigma^j(h)\sigma^j(h)
\\ &\quad + \int_E \big( \Pi_{T_h \mathcal{M}} \gamma(h,x) - Df(z) Dg(h) \Pi_{T_h \mathcal{M}} \gamma(h,x)
\\ &\quad\quad\quad\quad + \Pi_{(T_h \mathcal{M})^{\perp}} \gamma(h,x) - Df(z) Dg(h) \Pi_{(T_h \mathcal{M})^{\perp}} \gamma(h,x) \big) F(dx)
\\ &= Df(z) Dg(h) \bigg( \beta(h) - \frac{1}{2} \sum_{j \in \mathbb{N}} D\sigma^j(h)\sigma^j(h) - \int_E \Pi_{(T_h \mathcal{M})^{\perp}} \gamma(h,x) F(dx) \bigg)
\\ &\quad + \frac{1}{2} \sum_{j \in \mathbb{N}} D\sigma^j(h)\sigma^j(h) + \int_E \Pi_{(T_h \mathcal{M})^{\perp}} \gamma(h,x) F(dx) = \beta(h),
\end{align*}
showing that (\ref{beta-umg}) is fulfilled.
\end{proof}

\end{document}